 \def\@seccntformat#1{\csname the#1\endcsname.\quad}
\theoremstyle{plain}
\newtheorem{theorem}{Theorem}
\newtheorem{proposition}[theorem]{Proposition}
\newtheorem{corollary}[theorem]{Corollary}
\newtheorem{lemma}[theorem]{Lemma}
\newcounter{mainresult}
\renewcommand{\themainresult}{\Alph{mainresult}}
\newtheorem{propositionA}[mainresult]{Proposition}
\newtheorem{theoremA}[mainresult]{Theorem}
\newtheorem{corollaryA}[mainresult]{Corollary}
\theoremstyle{definition}
\newtheorem{definitionA}[mainresult]{Definition}
\theoremstyle{remark}
\newtheorem{remark}[theorem]{Remark}
\theoremstyle{remark}
\newtheorem{example}[theorem]{Example}
\numberwithin{equation}{section}
\newcommand{\mL}{L\kern-0.08cm\char39}
\newcommand{\Lip}{\operatorname{Lip}}
\newcommand{\id}{{\rm id}}
\newcommand{\End}{E}
\newcommand{\Ord}{O}
\newcommand{\Branch}{B}
\newcommand{\Cut}{\operatorname{Cut}}   
\newcommand{\NNN}{\mathbb N}
\newcommand{\ZZZ}{\mathbb Z}
\newcommand{\RRR}{\mathbb R}
\newcommand{\SSS}{\mathbb S}
\newcommand{\III}{I}
\newcommand{\CCc}{\mathcal{C}}
\newcommand{\DDd}{\mathcal{D}}
\newcommand{\HHh}{\mathcal{H}}
\newcommand{\closure}[1]{\overline{#1}}
\newcommand{\boundary}[1]{\partial {#1}}
\newcommand{\interior}[1]{\operatorname{int}(#1)}
\newcommand{\eps}{\varepsilon}
\newcommand{\ord}{\operatorname{ord}}
\newcommand{\abs}[1]{\lvert#1\rvert}
\newcommand{\card}[1]{\##1}
\newcommand{\invlim}{\varprojlim}
\newcommand{\lengthd}[2]{\HHh^1_{#1}(#2)}
\begin{document}

\title[Length-expanding Lipschitz maps on totally regular continua]
 {Length-expanding Lipschitz maps\\on totally regular continua}

\author[Vladim\'\i r \v Spitalsk\'y]{Vladim\'\i r \v Spitalsk\'y}
\address{Department of Mathematics, Faculty of Natural Sciences,
          Matej Bel University, Tajovsk\'eho 40, 974 01 Bansk\'a Bystrica,
          Slovakia}
\email{vladimir.spitalsky@umb.sk}

\subjclass[2010]{Primary 37B05, 37B20, 37B40; Secondary 54H20}

\keywords{Lipschitz map, length-expanding map, tent map,
totally regular continuum, rectifiable
curve, exact Devaney chaos, specification property.}

\begin{abstract}
The tent map is an elementary example of an interval map po\-sse\-ssing many interesting properties,
such as dense periodicity, exactness, Lip\-schitz\-ness and a kind of length-expansiveness.
It is often used in constructions of dynamical systems on the interval/trees/graphs.
The purpose of the present paper is to construct,
on totally regular
continua (i.e.~on topologically rectifiable curves),
maps sharing some typical properties with
the tent map. These maps will be called
length-expanding Lipschitz maps, briefly LEL maps.
We show that every totally regular continuum endowed with a suitable
metric admits a LEL map. As an application we obtain that every totally regular continuum
admits an exactly Devaney chaotic map with finite entropy and the specification property.
\end{abstract}

\maketitle

\thispagestyle{empty}

\section{Introduction}\label{S:intro}
The tent map is the piecewise linear map $f$ on the interval $I=[0,1]$ given by
$x\mapsto 2 \min\{x,1-x\}$. The properties of this map,
conjugate to the full logistic map $x\mapsto 4x(1-x)$,
include
Lipschitzness, length-expansiveness
(in a sense that it doubles the length of every subinterval $J$ of $I$ not containing $1/2$), exactness, specification, finite positive topological entropy and dense periodicity, just to name a few.
This map, together with ``generalized'' tent maps, i.e.~piecewise linear
continuous maps $f_k:I\to I$
($k\ge 3$) fixing $0$ and  mapping linearly every interval $[(i-1)/k,i/k]$ onto
$I$, are frequently used in dynamics.
Usefulness of these maps lies in the fact that on one hand they are
very simple (and so we have easy explicit formulae for iterates,
periodic points, horseshoes, etc.) and on the other hand they are
very ``powerful''. They are often used in constructions of systems
on the interval/trees/graphs with special properties. For example,
it is known that to construct a transitive map on the unit interval
with the smallest possible topological entropy, one can define
$g:I\to I$ in such a way that $1/2$ is a fixed point, $g$ maps
linearly $I_0=[0,1/2]$ onto $I_1=[1/2,1]$ and $g|_{I_1}:I_1\to I_0$
is ``tent-like''. Analogously one can define a transitive map with
the smallest possible entropy $(1/n)\log2$ on any $n$-star $S_n$
($n\ge 3$), see \cite{AKLS}; the map fixes the branch point of
$S_n$, maps cyclically each branch to the next one, all but one
linearly and the remaining one in a ``tent-like'' way.

Unfortunately, when one wants to construct a map with given
properties on curves more general than graphs, he/she faces the
problem that no direct analogue of the tent map on such curves is
known. Take e.g.~the $\omega$-star $X$, which is a very simple
dendrite defined as an infinite wedge of arcs. A construction of a
transitive finite entropy map on $X$ is much more complicated then
on $n$-stars and, as far as we know, no such construction has been
available in literature. The only result in this direction known to
us is the theorem of Agronsky and Ceder \cite{AC01} stating that any
finite-dimensional Peano continuum (hence also the $\omega$-star)
admits a transitive map; however, the proof does not say anything
about the entropy of the map.

The purpose of the present paper is to construct,
on continua more general than graphs, a family of maps
sharing some typical properties with the tent map.
Since the key property of these maps
will deal with the \emph{length} (Hausdorff one-dimensional measure)
of subcontinua and their images, the natural class of spaces to consider is the class
of \emph{rectifiable curves}, i.e.~continua of finite length. Topologically they
coincide with the class of totally regular continua.
Recall that a continuum $X$ is \emph{totally regular}
if for every point $x\in X$ and every countable set $P\subseteq X$
there is a basis of neighborhoods of $x$ with finite boundary not
intersecting $P$. This notion was introduced in \cite{Nik}, but the
class of these continua was studied a long time before, see e.g.~\cite{Why35,Eil38,EH,Eil44}.
For more details on totally regular continua
see Section~\ref{SS:totallyRegular}.

Before stating the main results of the paper we need to introduce
the notion of a length-expanding Lipschitz map.
Let $X$ be a non-degenerate totally regular continuum.
We say that a family $\CCc$ of
non-degenerate subcontinua of $X$
is \emph{dense} if every nonempty open set in $X$ contains a member of $\CCc$.
Recall that a map $f:(X,d)\to (X',d')$ between metric spaces is \emph{Lipschitz-$L$}
if $d'(f(x),f(y))\le L\cdot d(x,y)$ for every $x,y\in X$.
For a metric space $(X,d)$, the Hausdorff one-dimensional measure is denoted by $\HHh^1_d$.

\begin{definitionA} Let $X=(X,d)$, $X'=(X',d')$
be non-degenerate (totally regular) continua of finite length and let
$\CCc,\CCc'$ be dense systems of
subcontinua of $X,X'$, respectively.
We say that a continuous map
$f:X\to X'$ is \emph{length-expanding} with respect to $\CCc,\CCc'$
if there exists $\varrho>1$ (called \emph{length-expansivity constant} of $f$)
such that, for every $C\in \CCc$, $f(C)\in\CCc'$ and
\begin{equation}\label{EQ:defLengthExpanding}
       \text{if} \quad
       f(C)\ne X'
       \qquad\text{then}\quad
       \lengthd{d'}{f(C)} \ge \varrho\cdot \lengthd{d}{C}.
\end{equation}
Moreover, if $f$ is surjective and Lipschitz-$L$ we say
that $f:(X,d,\CCc)\to (X',d',\CCc')$ is \emph{$(\varrho,L)$-length-expanding Lipschitz}.
Sometimes we briefly say that $f$ is
\emph{$(\varrho,L)$-LEL} or only \emph{LEL}.
On the other hand, when we wish to be more precise, we say that $f$ is \emph{$(\CCc,\CCc',\varrho,L)$-LEL}.
\end{definitionA}

A few comments are necessary. Assume that $f:(X,d,\CCc)\to
(X',d',\CCc')$ is $(\varrho,L)$-LEL and denote by $\CCc_X$ and
$\CCc_{X'}$ the systems of \emph{all} subcontinua of $X$ and $X'$,
respectively. Obviously, then also $f:(X,d,\CCc)\to
(X',d',\CCc_{X'})$ is {$(\varrho,L)$-LEL. However, one cannot claim
that $f:(X,d,\CCc_X)\to (X',d',\CCc')$ is $(\varrho,L)$-LEL. In
fact, for some spaces $(X,d)$, $(X',d')$ there is no LEL map
$f:(X,d,\CCc_X)\to (X',d',\CCc_{X'})$. For instance this is the case
when $X$ is the $\omega$-star  and $X'=I$. To show this, suppose
that there is a $(\varrho,L)$-LEL map $f:(X,d,\CCc_X)\to
(X',d',\CCc_{X'})$. Take $k\in\NNN$ such that $\varrho>L/k$ and find
a $k$-star $C$ in $X$ such that every edge of $C$ is mapped onto the
same proper subinterval of $X'$. Then $\lengthd{d'}{f(C)}\le
(L/k)\cdot\lengthd{d}{C}<\varrho\cdot\lengthd{d}{C}$, a
contradiction.

Our first result says that in the special case when $X=X'$ and
$\CCc=\CCc'$, LEL maps have interesting dynamical properties. (For
the definitions of the corresponding notions, see
Section~\ref{S:preliminaries}.)

\newcommand{\propExact}{
Let $f:(X,d,\CCc)\to (X,d,\CCc)$ be a LEL map. Then $f$ is exact and
has finite positive entropy. Moreover, if $f$ is the composition
$\varphi\circ\psi$ of some maps $\psi:X\to I$ and $\varphi:I\to X$,
then $f$ has the specification property and so   it is exactly
Devaney chaotic. }

\begin{propositionA}\label{P:tentLikeIsExact}
\propExact{}
\end{propositionA}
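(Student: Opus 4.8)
The plan is to prove the assertions in the order: exactness, then finiteness together with positivity of the entropy, and finally --- under the extra hypothesis --- the specification property, from which exact Devaney chaos follows at once.

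\emph{Exactness.} The core of everything is a growth lemma: for every $C\in\CCc$ there is $n\in\NNN$ with $f^n(C)=X$. Indeed, by the LEL property $f^k(C)\in\CCc$ for all $k$, and as long as $f^k(C)\ne X$ one has $\lengthd{d}{f^{k+1}(C)}\ge\varrho\,\lengthd{d}{f^k(C)}$; since $\lengthd{d}{C}\ge\operatorname{diam}(C)>0$ while $\lengthd{d}{X}<\infty$ and $\varrho>1$, the alternative $f^k(C)\ne X$ cannot hold for every $k$. As $f$ is surjective, once $f^n(C)=X$ we get $f^m(C)=X$ for all $m\ge n$. Exactness is then immediate: a nonempty open set $U$ contains some $C\in\CCc$ by density, hence $f^n(U)\supseteq f^n(C)=X$ for a suitable $n$. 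In particular $f$ is transitive and topologically mixing.

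\emph{Finite positive entropy.} For positivity I would extract from exactness a two-branch horseshoe for an iterate of $f$: pick disjoint $C_1,C_2\in\CCc$ (possible since $X$ is non-degenerate and $\CCc$ is dense), then by the growth lemma and surjectivity of $f$ choose $n$ with $f^n(C_1)=f^n(C_2)=X$, so $f^n(C_i)\supseteq C_1\cup C_2$ for $i=1,2$. The standard combinatorial argument --- realizing each word in $\{1,2\}^k$ by a point whose $f^n$-orbit successively meets the prescribed $C_i$'s --- produces $2^k$ points that are $(k,\varepsilon)$-separated with $\varepsilon=\tfrac12\dist(C_1,C_2)>0$, so $h_{\mathrm{top}}(f)\ge\tfrac1n\log 2>0$. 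For finiteness I would use that $f$ is Lipschitz-$L$ and that $(X,d)$, being a continuum of finite length, is a rectifiable curve and hence has upper box dimension at most $1$; thus $X$ is covered by $O(1/\rho)$ balls of radius $\rho$, a covering by balls of radius $\varepsilon L^{1-n}$ is $(n,\varepsilon)$-spanning for $f$, and counting yields $h_{\mathrm{top}}(f)\le\max\{0,\log L\}<\infty$.

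\emph{Specification and exact Devaney chaos.} Assume now $f=\varphi\circ\psi$ with continuous $\psi\colon X\to I$, $\varphi\colon I\to X$. Surjectivity of $f$ forces $\varphi$ onto, while $\psi(X)$ is a non-degenerate subinterval of $I$ (it cannot be a point, else $f$ would be constant); replacing $I$ by $\psi(X)$ (homeomorphic to $I$) and $\varphi$ by its restriction --- still onto $X$, since $X=f(X)=\varphi(\psi(X))$ --- I may assume $\psi$ is onto. Put $g=\psi\circ\varphi\colon I\to I$, so $f^m=\varphi\circ g^{m-1}\circ\psi$ for $m\ge1$. Using that $\psi$ is onto, one checks that $g$ is exact: for nonempty open $V\subseteq I$ the set $W=\psi^{-1}(V)$ is nonempty and open, $f^n(W)=X$ for some $n$ by the first part, and since $\psi(W)=V$ this reads $\varphi(g^{n-1}(V))=X$; applying $\psi$ gives $g^n(V)=I$. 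An exact interval map is topologically mixing, hence has the specification property by Blokh's theorem. Finally $\varphi$ is a factor map from $(I,g)$ onto $(X,f)$ --- a continuous surjection with $\varphi\circ g=\varphi\circ\psi\circ\varphi=f\circ\varphi$ --- and the specification property is inherited by factors, so $f$ has it too. Since specification yields dense periodic points and transitivity, which on the infinite space $X$ also force sensitive dependence, $f$ is Devaney chaotic; combined with the exactness proved above, $f$ is exactly Devaney chaotic.

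\emph{Main obstacle.} The exactness and the entropy bounds are essentially routine (positivity from the horseshoe, finiteness from Lipschitzness plus $\dimUpper_B(X,d)\le1$). The substantive part is the specification statement: one must reduce to a surjective $\psi$, prove that $g=\psi\circ\varphi$ is exact by careful tracking of images and preimages, and then correctly invoke Blokh's theorem (topological mixing $\Rightarrow$ specification on the interval) together with the --- standard but essential --- fact that the specification property passes to factors.
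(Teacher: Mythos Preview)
Your proof is correct and follows essentially the same route as the paper: exactness via the growth estimate $\lengthd{d}{f^n(C)}\ge\varrho^n\lengthd{d}{C}$, and specification by passing to the interval map $g=\psi\circ\varphi$, invoking Blokh's theorem (exact $\Rightarrow$ mixing $\Rightarrow$ specification on the interval), and then factoring back to $(X,f)$ via $\varphi$. You are in fact more thorough than the paper, which gives no argument at all for the entropy claims and tacitly treats $\psi$ as surjective; your horseshoe for positivity, the box-dimension bound $\dimUpper_B(X,d)\le 1$ for finiteness, and the explicit reduction to surjective $\psi$ (replacing $I$ by the subinterval $\psi(X)$) are all correct and fill genuine gaps in the paper's presentation.
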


The above mentioned tent-like maps $f_k:I\to I$ (where $k\ge 3$ and
$I$ is endowed with the Euclidean metric $d_I$) are
$(\CCc_I,\CCc_I,k/2,k)$-LEL, where $\CCc_I$ is the system of all
non-degenerate closed subintervals of $I$. Here $k\ge 3$ because the
classical tent map $f_2$ is not $(\CCc_I,\CCc_I,\varrho,L)$-LEL for
any $\varrho>1$ and any $L$. However, it becomes
$(\CCc_I,\CCc_I,\varrho,L)$-LEL (for some $\varrho>1$ and $L$) after
a slight change of the metric.
One can easily construct examples of LEL maps between arbitrary graphs, even in
the form of the composition $\varphi\circ\psi$ as in Proposition~\ref{P:tentLikeIsExact};
one can use e.g.~the maps from \cite[Lemma~3.6]{ARR}.
Further, for a given continuum
$(X,d)$ of finite length, one can often find $\CCc,\CCc'$
and construct LEL-maps $\varphi:(I,d_I,\CCc_I)\to (X,d,\CCc)$ and
$\psi:(X,d,\CCc')\to (I,d_I,\CCc_I)$.
However, it is not so easy to obtain $\CCc'\supseteq \CCc$; this inclusion is
desirable since then also the composition $\psi\circ \varphi$
is LEL (see Lemma~\ref{L:tentLikeComposition}).

Our main results, the proofs of which were inspired by \cite{AC01}
and \cite{BNT}, assert that such LEL maps can always be found
provided we allow to change the metric on $X$ (the new metric still
being compatible with the topology). Recall that a metric $d$ on $X$
is \emph{convex} if for every $x,y\in X$ there is $z\in X$ such that
$d(x,z)=d(z,y)=d(x,y)/2$. For two points $a,b\in X$ of a continuum
$X$, $\Cut_X(a,b)$ denotes the set of points $x\in X$ such that
$a,b$ lie in different components of $X\setminus\{x\}$.

\newcommand{\thmMainA}{
 For every non-degenerate totally regular continuum $X$ and every $a,b\in X$ we can find a convex metric
 $d=d_{X,a,b}$ on $X$ and Lipschitz surjections
 $\varphi_{X,a,b}:I\to X$, $\psi_{X,a,b}:X\to I$
 with the following properties:
\begin{enumerate}
  \item[(a)] $\lengthd{d}{X}= 1$;
  \item[(b)] the system
    $\CCc=\CCc_{X,a,b}=\{\varphi_{X,a,b}(J):\ J\text{ is a closed subinterval of } I\}$ 
    is a dense system of subcontinua of $X$;
  \item[(c)] for every $\varrho>1$ there are
  a constant $L_\varrho$ (depending only on $\varrho$) and
  $(\varrho,L_\varrho)$-LEL maps
   $$
    \varphi:(I,d_I,\CCc_I)\to (X,d,\CCc)
    \quad\text{and}\quad
    \psi:(X,d,\CCc)\to (I,d_I,\CCc_I)
   $$
    with $\varphi(0)=a$, $\varphi(1)=b$ , $\psi(a)=0$ and such that
    $\varphi=\varphi_{X,a,b}\circ f_k$, $\psi=f_l\circ \psi_{X,a,b}$ for some
    $k,l\ge 3$.
\end{enumerate}
Moreover, if $\Cut_X(a,b)$ is uncountable, $d,\varphi,\psi$ can be assumed to satisfy
\begin{enumerate}
  \item[(d)] $d(a,b)>1/2$ and $\psi(b)=1$.
\end{enumerate}
}

\begin{theoremA}\label{T:MAINa}
\thmMainA{}
\end{theoremA}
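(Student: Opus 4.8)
The plan is to reduce the whole statement to a single self‑contained construction and then to obtain the LEL maps of part~(c) by pre‑ and post‑composing with generalized tent maps. Concretely, I would first prove the following assertion $(\star)$: \emph{for every non‑degenerate totally regular continuum $X$ and all $a,b\in X$ there are a convex metric $d$ on $X$ with $\lengthd{d}{X}=1$ and Lipschitz surjections $\varphi_{X,a,b}\colon I\to X$, $\psi_{X,a,b}\colon X\to I$ with $\varphi_{X,a,b}(0)=a$, $\varphi_{X,a,b}(1)=b$, $\psi_{X,a,b}(a)=0$, such that for some $c_0\in(0,1]$ the interval map $g:=\psi_{X,a,b}\circ\varphi_{X,a,b}$ satisfies, for every closed subinterval $J\subseteq I$, either $g(J)=I$ or $\lengthd{d_I}{g(J)}\ge c_0\,\lengthd{d_I}{J}$; and, if moreover $\Cut_X(a,b)$ is uncountable, one can also arrange $d(a,b)>1/2$ and $\psi_{X,a,b}(b)=1$.}

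Granting $(\star)$, the theorem follows quickly. Part~(a) is built into $(\star)$; since $g$ is length‑expanding it is nowhere locally constant, hence so is $\varphi_{X,a,b}$, and together with continuity and surjectivity this gives~(b) for $\CCc=\CCc_{X,a,b}$. For~(c), put $L_0=\max\{\Lip(\varphi_{X,a,b}),\Lip(\psi_{X,a,b})\}$; given $\varrho>1$, fix odd integers $k,l\ge\max\{3,\,2\varrho L_0/c_0\}$ and set $\varphi=\varphi_{X,a,b}\circ f_k$, $\psi=f_l\circ\psi_{X,a,b}$, $L_\varrho=\max\{k,l\}L_0$. A routine computation shows these are $(\varrho,L_\varrho)$‑LEL, using only (i) the elementary fact that for a closed subinterval $K\subseteq I$ either $f_m(K)=I$ (which occurs whenever $K$ meets two consecutive turning points of $f_m$, in particular whenever $\lengthd{d_I}{K}\ge 2/m$) or $\lengthd{d_I}{f_m(K)}\ge(m/2)\lengthd{d_I}{K}$, and (ii) that $\psi_{X,a,b}$ restricted to a subcontinuum $C'\subseteq X$ is $L_0$‑Lipschitz onto the interval $\psi_{X,a,b}(C')$, so $\lengthd{d_I}{\psi_{X,a,b}(C')}\le L_0\,\lengthd{d}{C'}$. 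For instance, if $K\in\CCc_I$ with $\varphi(K)\ne X$ and $K'=f_k(K)$, then $\lengthd{d}{\varphi(K)}\ge\lengthd{d_I}{g(K')}/L_0\ge(c_0/L_0)\lengthd{d_I}{K'}\ge(c_0k/2L_0)\lengthd{d_I}{K}\ge\varrho\,\lengthd{d_I}{K}$ (the case $g(K')=I$ being covered by $\lengthd{d}{\varphi(K)}\ge1/L_0$ and $\lengthd{d_I}{K}<2/k$), and symmetrically for $\psi$. The boundary conditions hold since $f_k(0)=0$ and $f_k(1)=f_l(1)=1$ (both exponents odd); the containments $\varphi(\CCc_I)\subseteq\CCc$, $\psi(\CCc)\subseteq\CCc_I$, and, in the uncountable case, part~(d) ($d(a,b)>1/2$ and $\psi(b)=f_l(1)=1$), are immediate. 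Thus everything comes down to $(\star)$.

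For $(\star)$ I would proceed as follows (following the spirit of \cite{AC01,BNT}). A convex metric of finite length exists by the structure theory of totally regular continua of Section~\ref{SS:totallyRegular}; rescale it to $\lengthd{d}{X}=1$. When $\Cut_X(a,b)$ is uncountable, the remaining freedom in the metric lets one put more than $1/2$ of the total length onto $\Cut_X(a,b)$ (which, being uncountable, carries a nonatomic Borel measure), and then $d(a,b)>1/2$ because every arc in $X$ from $a$ to $b$ passes through all of $\Cut_X(a,b)$. Next, using total regularity and finiteness of length, write $X=\closure{\bigcup_n X_n}$ as the closure of an increasing union of finite graphs, with $a,b\in X_1$, each $\closure{X_{n+1}\setminus X_n}$ a finite union of arcs attached to $X_n$ along one or both endpoints, and $\lengthd{d}{\closure{X_{n+1}\setminus X_n}}$ summable and geometrically small; in the uncountable case arrange that $X_1$ contains an arc from $a$ to $b$ of length $>1/2$. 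On the graph $X_1$ one constructs a ``tent‑like'' pair $\varphi^{(1)}\colon I\to X_1$, $\psi^{(1)}\colon X_1\to I$: $\varphi^{(1)}$ an Eulerian‑type traversal of $X_1$ with $\varphi^{(1)}(0)=a$, $\varphi^{(1)}(1)=b$, and $\psi^{(1)}$ a fold of $X_1$ onto $I$ with $\psi^{(1)}(a)=0$, both of bounded Lipschitz constant, chosen so that $\psi^{(1)}\circ\varphi^{(1)}$ is piecewise linear of fixed slope magnitude $s_0>0$ and length‑expanding, and so that in the uncountable case $\psi^{(1)}$ runs monotonically from $0$ to $1$ along the $a$--$b$ arc (whence $\psi^{(1)}(b)=1$); such maps on finite graphs are available, cf.\ \cite[Lemma~3.6]{ARR}. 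Inductively, to pass from $(\varphi^{(n)},\psi^{(n)})$ to $(\varphi^{(n+1)},\psi^{(n+1)})$ one inserts, for each new arc $A$, a single detour out along $A$ and back at a suitable point of the traversal on a parameter interval of length comparable to $\lengthd{d}{A}$, and extends $\psi^{(n)}$ over $A$ by a ``tooth'' of height comparable to $\lengthd{d}{A}$. Since the added lengths are summable, the $\varphi^{(n)}$ are uniformly Lipschitz and uniformly Cauchy; the $\psi^{(n)}$ stay uniformly Lipschitz and stabilize off the new arcs; and, with the construction carried out carefully, $\psi^{(n)}\circ\varphi^{(n)}$ stays length‑expanding with a uniform constant $c_0$. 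The limits $\varphi_{X,a,b}$, $\psi_{X,a,b}$ are the desired maps: $\varphi_{X,a,b}$ is surjective because its image is closed and contains $\bigcup_n X_n$, and $\psi_{X,a,b}$ is surjective because already $\psi^{(1)}$ is; the length‑expanding inequality passes to the limit, and $\psi_{X,a,b}(b)=1$ in the uncountable case.

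The main obstacle is exactly this inductive step of $(\star)$: keeping all the quantitative requirements alive simultaneously throughout the infinite construction. Two constraints are delicate. First, when a new arc $A$ is attached along two distinct points $p,p'\in X_n$, one must already have $\abs{\psi^{(n)}(p)-\psi^{(n)}(p')}\le L_0\,\lengthd{d}{A}$, for otherwise $\psi^{(n+1)}$ cannot be extended over $A$ with the uniform Lipschitz bound; this constrains the order in which arcs are added and along which endpoints (one must build the exhaustion so that newly joined points already have nearly equal $\psi$‑values). Second, the ``teeth'' of $g^{(n)}=\psi^{(n)}\circ\varphi^{(n)}$ must be organized --- smaller teeth nested inside larger ones, and teeth of each generation separated in value in proportion to their height --- so that the length‑expanding property is not destroyed when a small new tooth is placed near a large old one. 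Threading the exhaustion $(X_n)$, the detour parameter intervals, and the tooth heights through all of these constraints, using the geometric decay of $\lengthd{d}{\closure{X_{n+1}\setminus X_n}}$ afforded by total regularity and finiteness of length, is the technical heart of the proof; the base case on finite graphs and the metric adjustment needed for part~(d) are comparatively routine given Section~\ref{SS:totallyRegular}.
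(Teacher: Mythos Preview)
Your reduction to an auxiliary statement $(\star)$ and the subsequent derivation of~(c) via pre/post-composition with the tent maps $f_k,f_l$ is exactly the paper's strategy: this is Proposition~\ref{T:main} followed by the short proof of Theorem~\ref{T:MAINa} in Section~\ref{S:applications}. Your chain of inequalities for $\varphi,\psi$ being LEL is correct and matches the argument of Corollary~\ref{C:tentLikeInterval}.

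Where you diverge from the paper is in the construction underlying $(\star)$, and here there are two issues worth flagging. First, the theorem requires $L_\varrho$ to depend \emph{only} on $\varrho$, not on $X,a,b$; hence the constants $c_0,L_0$ of $(\star)$ must be universal. Your sketch starts from an arbitrary finite-length convex metric and builds $\psi^{(n)}$ as ad hoc fold maps; nothing in that outline forces universal bounds, and without them your formula $L_\varrho=\max\{k,l\}L_0$ with $k,l\asymp\varrho L_0/c_0$ depends on $X$. The paper secures universality by \emph{building the metric simultaneously with the maps}: on each graph $X_n$ of the inverse-limit representation (via \cite{BNT}) the edge lengths are forced to decay geometrically with a fixed ratio $q$, which is what makes $\gamma,\Gamma,L$ in Proposition~\ref{T:main} absolute constants.

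Second, and more substantively, the paper completely sidesteps the compatibility obstacles you single out as ``the technical heart''. It takes $\psi_{X,a,b}$ to be (a rescaling of) the distance function $x\mapsto d(a,x)$. This is automatically Lipschitz-$1$, needs no inductive patching, and satisfies the required extension compatibility for free; the only work is the \emph{lower} bound $\abs{h(g(J))}\ge\text{const}\cdot\lengthd{d}{g(J)}$, which follows from an elementary free-arc estimate (Lemma~\ref{L:freeArc}) together with the geometric edge-length metric (Lemma~\ref{L:graph:admissible2}). So your ``must already have $\abs{\psi^{(n)}(p)-\psi^{(n)}(p')}\le L_0\lengthd{d}{A}$'' and ``teeth nesting'' constraints simply do not arise in the paper's route. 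Your approach of constructing $\psi$ directly as a limit of folds may be salvageable, but as written it leaves precisely the hard step unproved; the distance-function trick is what makes the paper's argument go through cleanly and with universal constants.
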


\newcommand{\thmMain}{
Keeping the notation from Theorem~\ref{T:MAINa},
for every $\varrho>1$, every non-degenerate totally regular continua $X,X'$ and every points
$a,b\in X$, $a',b'\in X'$
there are a constant $L_\varrho$ (depending only on $\varrho$) and $(\varrho,L_\varrho)$-LEL map
$$f:(X,d_{X,a,b},\CCc_{X,a,b})\to (X',d_{X',a',b'},\CCc_{X',a',b'})$$
with $f(a)=a'$ and, provided $\Cut_X(a,b)$ is uncountable,
$f(b)=b'$. Moreover, $f$ can be chosen to be the composition
$\varphi\circ\psi$ of two LEL-maps $\psi:X\to I$ and $\varphi:I\to
X'$. }

\begin{theoremA}\label{T:MAIN}
\thmMain{}
\end{theoremA}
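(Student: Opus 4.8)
The plan is to obtain Theorem~\ref{T:MAIN} as an essentially immediate corollary of Theorem~\ref{T:MAINa}, by composing the maps $\psi$ and $\varphi$ supplied there for the two continua $X$ and $X'$. Concretely, fix $\varrho>1$, the continua $X,X'$ and the points $a,b\in X$, $a',b'\in X'$. Apply Theorem~\ref{T:MAINa} to $X$ with the distinguished points $a,b$: this gives the convex metric $d_{X,a,b}$, the dense system $\CCc_{X,a,b}$, and, for the parameter $\varrho$ (or rather for a slightly larger value; see below), a constant and a $(\varrho,L_\varrho)$-LEL map $\psi\colon(X,d_{X,a,b},\CCc_{X,a,b})\to(I,d_I,\CCc_I)$ with $\psi(a)=0$. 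Apply Theorem~\ref{T:MAINa} again, this time to $X'$ with the distinguished points $a',b'$: this gives $d_{X',a',b'}$, $\CCc_{X',a',b'}$, and a $(\varrho,L_\varrho)$-LEL map $\varphi\colon(I,d_I,\CCc_I)\to(X',d_{X',a',b'},\CCc_{X',a',b'})$ with $\varphi(0)=a'$ and $\varphi(1)=b'$. Set $f=\varphi\circ\psi$.

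The verification then proceeds as follows. First, $f$ is a composition of two surjections, hence surjective, and a composition of a Lipschitz-$L_\varrho$ map with a Lipschitz-$L_\varrho$ map, hence Lipschitz-$L_\varrho^2$. Second, for the length-expansion property one takes $C\in\CCc_{X,a,b}$, so $C=\varphi_{X,a,b}(J)$ for a closed subinterval $J$; since $\psi$ is length-expanding into $\CCc_I$, $\psi(C)$ is a closed subinterval of $I$, and since $\varphi$ is length-expanding from $\CCc_I$ into $\CCc_{X',a',b'}$, $f(C)=\varphi(\psi(C))\in\CCc_{X',a',b'}$. If moreover $f(C)\neq X'$, then $\varphi(\psi(C))\neq X'$, so by the length-expansion inequality for $\varphi$ we get $\lengthd{d_{X',a',b'}}{f(C)}\ge\varrho\cdot\HHh^1_{d_I}(\psi(C))$; and since $\varphi(\psi(C))\neq X'$ forces $\psi(C)\neq I$, the length-expansion inequality for $\psi$ gives $\HHh^1_{d_I}(\psi(C))\ge\varrho\cdot\lengthd{d_{X,a,b}}{C}$. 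Chaining these yields $\lengthd{d_{X',a',b'}}{f(C)}\ge\varrho^2\cdot\lengthd{d_{X,a,b}}{C}\ge\varrho\cdot\lengthd{d_{X,a,b}}{C}$. Hence $f$ is $(\varrho,L_\varrho^2)$-LEL; renaming the constant ($L_\varrho^2$ still depends only on $\varrho$), we obtain the asserted $(\varrho,L_\varrho)$-LEL map. Third, $f(a)=\varphi(\psi(a))=\varphi(0)=a'$, and if $\Cut_X(a,b)$ is uncountable then clause (d) of Theorem~\ref{T:MAINa} gives $\psi(b)=1$, whence $f(b)=\varphi(1)=b'$. Finally $f=\varphi\circ\psi$ is by construction the composition of the LEL maps $\psi\colon X\to I$ and $\varphi\colon I\to X'$, as required.

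There is essentially one genuine point to be careful about: the composition of two $(\varrho,L)$-length-expanding maps expands length by $\varrho^2$ rather than $\varrho$, which is harmless for the lower bound (since $\varrho^2\ge\varrho$), but one should make sure the bookkeeping of the Lipschitz constants is clean — one applies Theorem~\ref{T:MAINa} with the \emph{same} $\varrho$ for both $X$ and $X'$, obtains a single constant $L_\varrho$ valid for both (taking the larger of the two if the theorem is read literally as producing possibly different constants), and then the composition is Lipschitz-$L_\varrho^2$. Alternatively, if one insists on the Lipschitz constant being literally a prescribed function of $\varrho$, one may instead invoke Theorem~\ref{T:MAINa} with parameter $\varrho$ for each factor and simply record that $\varphi\circ\psi$ is $(\varrho,L_\varrho^2)$-LEL, then set the ``$L_\varrho$'' in the statement of Theorem~\ref{T:MAIN} to be this $L_\varrho^2$. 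I expect no real obstacle here: the entire content of Theorem~\ref{T:MAIN} has been front-loaded into Theorem~\ref{T:MAINa} (in particular the fact that the \emph{same} metric $d_{X,a,b}$ and system $\CCc_{X,a,b}$ serve simultaneously as domain data and as codomain data, which is exactly what makes the two applications compatible), so the proof is a short glue argument of the kind already illustrated by the discussion preceding the theorems.
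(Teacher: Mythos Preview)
Your proposal is correct and follows essentially the same route as the paper: the paper's proof reads in its entirety ``The theorem follows from Theorem~\ref{T:MAINa} and Lemma~\ref{L:tentLikeComposition},'' and your argument is nothing more than these two ingredients with the composition lemma (Lemma~\ref{L:tentLikeComposition}) unpacked inline. The bookkeeping of constants you worry about is a non-issue, since the $L_\varrho$ in Theorem~\ref{T:MAINa} is stated to depend only on $\varrho$, not on the continuum; so the same constant works for both factors and the composition is $(\varrho^2,L_\varrho^2)$-LEL, which after renaming yields the claim.
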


In \cite{AC01} it was shown that every non-degenerate finite-dimensional Peano continuum admits
an exactly Devaney chaotic map and that every finite union of non-degenerate
finite-dimensional Peano continua admits a Devaney chaotic map.
Theorem ~\ref{T:MAIN}
and Proposition~\ref{P:tentLikeIsExact} imply the following results which, on one hand,
deal with smaller class of spaces, but on the other hand ensure finiteness of the entropy.

\newcommand{\corCont}{Every non-degenerate totally regular continuum admits an exactly Devaney
  chaotic map with finite positive entropy and specification.}
\newcommand{\corUnion}{Every finite union of non-degenerate totally regular continua admits a Devaney
  chaotic map with finite positive entropy.}

\begin{corollaryA}\label{C:mainCont}
\corCont{}
\end{corollaryA}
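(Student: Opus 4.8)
The plan is to read the corollary straight off Theorem~\ref{T:MAIN} and Proposition~\ref{P:tentLikeIsExact}, with essentially no additional work. Let $X$ be a non-degenerate totally regular continuum. Choose any two distinct points $a,b\in X$ and apply Theorem~\ref{T:MAIN} with $X'=X$, $a'=a$, $b'=b$ and with a fixed parameter, say $\varrho=2$. This yields the convex metric $d=d_{X,a,b}$ on $X$ (compatible with the original topology of $X$), the dense system $\CCc=\CCc_{X,a,b}$ of subcontinua of $X$, a constant $L=L_2$, and a $(\varrho,L)$-LEL map
$$f:(X,d,\CCc)\to (X,d,\CCc)$$
which, by the last sentence of Theorem~\ref{T:MAIN}, can be taken to be the composition $\varphi\circ\psi$ of LEL-maps $\psi:X\to I$ and $\varphi:I\to X$.

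Since the source and the target now carry the same space, the same metric and the same dense system, $f$ is a LEL self-map of $(X,d,\CCc)$ of precisely the type covered by Proposition~\ref{P:tentLikeIsExact}, and it is moreover presented in the factored form $\varphi\circ\psi$ required by the second half of that proposition. Hence Proposition~\ref{P:tentLikeIsExact} applies and shows that $f$ is exact, has finite positive topological entropy, enjoys the specification property, and is exactly Devaney chaotic.

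It remains only to observe that the conclusion does not depend on the auxiliary metric $d$: topological entropy is a topological invariant of compact spaces, and exactness, Devaney chaos and the specification property are invariant under topological conjugacy, hence in particular unaffected by replacing $d$ by any other metric inducing the same topology. As $d_{X,a,b}$ is compatible with the given topology of $X$, the map $f$ is a continuous self-map of $X$ (with its original topology) that is exactly Devaney chaotic, has finite positive entropy and enjoys the specification property, which is exactly the assertion of the corollary. There is no genuine obstacle in this argument beyond quoting the two earlier results; the only point that must be checked is that Theorem~\ref{T:MAIN} really produces a \emph{self}-map in the strict sense demanded by Proposition~\ref{P:tentLikeIsExact} (identical space, metric and dense system on the two sides, plus the presentation as $\varphi\circ\psi$), and this is precisely why we invoke it with $X'=X$, $a'=a$, $b'=b$.
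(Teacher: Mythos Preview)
Your argument is correct and follows exactly the route the paper takes: apply Theorem~\ref{T:MAIN} with $X'=X$, $a'=a$, $b'=b$ to obtain a LEL self-map in the factored form $\varphi\circ\psi$, and then invoke Proposition~\ref{P:tentLikeIsExact}. The paper's own proof is the one-line ``This immediately follows from Theorem~\ref{T:MAIN} and Proposition~\ref{P:tentLikeIsExact}''; your version simply spells out the details (including the metric-independence remark), but the content is identical.
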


\begin{corollaryA}\label{C:mainUnion}
\corUnion{}
\end{corollaryA}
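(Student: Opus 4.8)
The plan is to reuse the machinery behind Corollary~\ref{C:mainCont} (Theorem~\ref{T:MAIN} and Proposition~\ref{P:tentLikeIsExact}) block by block and to glue the blocks by a cyclic permutation, much as transitive $n$-star maps are built from tent-like maps. Write the space as $Y=X_1\cup\dots\cup X_n$ with each $X_i$ a non-degenerate totally regular continuum. Each connected component of $Y$ is the union of those (connected) $X_i$ it contains, so $Y$ has finitely many components $Y_1,\dots,Y_m$; being finitely many they are clopen and $Y=Y_1\sqcup\dots\sqcup Y_m$. Each $Y_j$ is a non-degenerate continuum which is a finite union of totally regular continua, hence is itself totally regular (a continuum that is a finite union of totally regular subcontinua is totally regular; see Section~\ref{SS:totallyRegular}). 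Fix base points $a_j,b_j\in Y_j$ and let $d_j,\CCc_j$ be the metric and dense system on $Y_j$ from Theorem~\ref{T:MAINa}; fix also any $\varrho>1$. Reading indices modulo $m$, apply Theorem~\ref{T:MAIN} to $Y_j,Y_{j+1}$ to obtain a $(\varrho,L_\varrho)$-LEL map $f_j:(Y_j,d_j,\CCc_j)\to(Y_{j+1},d_{j+1},\CCc_{j+1})$ of the form $f_j=\varphi_j\circ\psi_j$ with $\psi_j:(Y_j,d_j,\CCc_j)\to(I,d_I,\CCc_I)$ and $\varphi_j:(I,d_I,\CCc_I)\to(Y_{j+1},d_{j+1},\CCc_{j+1})$ LEL. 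Define $F:Y\to Y$ by $F|_{Y_j}=f_j$; since the $Y_j$ are clopen and each $f_j$ is a continuous surjection, $F$ is a well-defined continuous surjection that cyclically permutes $Y_1,\dots,Y_m$.

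The key point is that each first return map $F^m|_{Y_j}=f_{j-1}\circ f_{j-2}\circ\dots\circ f_{j+1}\circ f_j:Y_j\to Y_j$ is again a LEL map of the special form required by Proposition~\ref{P:tentLikeIsExact}. It is a composition of the LEL maps $f_j,f_{j+1},\dots,f_{j-1}$ whose intermediate systems coincide by construction, so iterating Lemma~\ref{L:tentLikeComposition} shows it is $(\varrho',L')$-LEL for some $\varrho'>1$, $L'<\infty$, as a self-map of $(Y_j,d_j,\CCc_j)$. Writing each $f_i=\varphi_i\circ\psi_i$ and separating off the first factor $\psi_j$ and the last factor $\varphi_{j-1}$, we get $F^m|_{Y_j}=\Phi_j\circ\Psi_j$ with $\Psi_j=\psi_j:Y_j\to I$ and $\Phi_j:I\to Y_j$ (here $\Phi_j=\varphi_{j-1}$ if $m=1$, and $\Phi_j$ is $\varphi_{j-1}$ composed with the intervening self-map of $I$ if $m\ge2$). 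Therefore Proposition~\ref{P:tentLikeIsExact} applies: $F^m|_{Y_j}$ is exact, has a dense set of periodic points, and has finite positive topological entropy.

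Transferring this to $F$ is routine. For transitivity, pick $x_0\in Y_1$ with $F^m$-orbit dense in $Y_1$; for $0\le r<m$ the set $\{F^{mk+r}(x_0):k\ge0\}=F^r(\{F^{mk}(x_0):k\ge0\})$ is the image of a dense subset of $Y_1$ under the continuous surjection $F^r:Y_1\to Y_{1+r}$, hence dense in $Y_{1+r}$, so the $F$-orbit of $x_0$ is dense in $Y$. Every periodic point of $F^m|_{Y_j}$ is periodic for $F$, so $\operatorname{Per}(F)$ is dense in each $Y_j$ and hence in $Y$; since $Y$ is infinite, transitivity together with dense periodicity already yields sensitivity (alternatively, each $F^m|_{Y_j}$ is exact, hence sensitive, and this passes to $F^m$ and then to $F$). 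Thus $F$ is Devaney chaotic. Finally $F^m$ preserves each clopen piece $Y_j$, so $h(F^m)=\max_j h(F^m|_{Y_j})$ is finite and positive, whence $h(F)=\tfrac1m h(F^m)\in(0,\infty)$, as required.

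I expect the two points needing real care to be: the bookkeeping of the dense systems $\CCc_I,\CCc_1,\dots,\CCc_m$ around the chain $Y_j\to I\to Y_{j+1}\to I\to\dots\to Y_j$, so that Lemma~\ref{L:tentLikeComposition} and then Proposition~\ref{P:tentLikeIsExact} genuinely apply to the first return maps; and the fact, used to pass to the components, that a continuum which is a finite union of totally regular continua is again totally regular, which should be recorded among the structural properties of totally regular continua.
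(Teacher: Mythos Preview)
Your argument is correct and follows the same strategy as the paper's: build LEL maps between consecutive pieces using Theorem~\ref{T:MAIN}, let $F$ permute the pieces cyclically, and apply Proposition~\ref{P:tentLikeIsExact} to each first-return map $F^m|_{Y_j}$ (which is LEL by Lemma~\ref{L:tentLikeComposition} and is a $\varphi\circ\psi$ composition through $I$). The paper's proof is terser---it writes $X=\bigsqcup_{i=1}^k X_i$ from the outset and concludes with ``since $f$ permutes $X_1,\dots,X_k$, the assertion follows''---so it effectively takes the $X_i$ to be the clopen components of $X$ and does not spell out the transitivity/entropy bookkeeping you give.

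The one substantive difference is your preliminary reduction: you allow the $X_i$ to overlap and pass to the components $Y_j$, which requires the fact that a continuum which is a finite union of totally regular subcontinua is itself totally regular. That fact is true, but contrary to your parenthetical remark it is \emph{not} stated in Section~\ref{SS:totallyRegular}; the paper avoids the issue by reading ``finite union'' as ``finite disjoint union''. If you keep the more general reading you should supply a short justification (e.g.\ via the finite-degree characterization~(2) in Theorem~\ref{T:totallyRegularEquivalentConditions}, or by observing that such a continuum is a Peano continuum that carries a metric of finite $\HHh^1$-length).
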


In a subsequent paper we deal with the problem of determining the infima of entropies
of transitive/exact/(exactly) Devaney chaotic maps on a given totally regular continuum
and we show that under some conditions this infimum is zero. The constructions are
heavily based on Theorems~\ref{T:MAINa} and \ref{T:MAIN}.
To illustrate usefulness of LEL maps
let us sketch here an example
which shows how easy is to construct a small entropy transitive system on the $\omega$-star.

\begin{example}
Let $X$ be the $\omega$-star with the branch point $a$ and edges $A_i$ ($i=1,2\dots$);
i.e.~$X=\bigcup_i A_i$ and $A_i\cap A_j=\{a\}$ for every $i\ne j$.
Take arbitrarily large $k$, put $Y=\bigcup_{i\ge k} A_i$ and define a convex metric $d$
on $X$ in such a way that it coincides with $d_{Y,a,a}$
on $Y$ and each of the sets $A_1,\dots,A_{k-1}$ has length $1$. 
Fix $\varrho>1$. By Theorem~\ref{T:MAINa} there are $(\varrho,L_\varrho)$-maps
$f_{k-1}:A_{k-1}\to Y$, $f_k:Y\to A_1$ fixing $a$. Let
$f_i:A_i\to A_{i+1}$ ($i=1,\dots,k-2$) be isometries fixing $a$. Then it suffices to define
$f:X\to X$ by $f|_{A_i}=f_i$ for $i<k$ and $f|_Y=f_k$. The map $f^k|_Y:Y\to Y$ is exact and
has dense periodic points by Proposition~\ref{P:tentLikeIsExact}; 
moreover, it is Lipschitz-$L_\varrho^2$. So $f$ is Devaney chaotic with entropy
$h(f)\le (2/k)\log L_\varrho$, where $L_\varrho$ does not depend on $k$.
\end{example}

The paper is organized as follows. In the next section we give
an outline of the proofs of Theorems~\ref{T:MAINa} and \ref{T:MAIN}.
In Section~\ref{S:preliminaries}
we recall all the needed definitions and facts.
In Section~\ref{S:tentLike} we prove some basic properties of LEL maps.
The main
part of the paper
--- Sections~\ref{S:proofOfMainThms} and \ref{S:varphi} --- are devoted to the
construction of LEL maps from the unit interval onto a
given totally regular continuum and vice versa,
see Proposition~\ref{T:main}. Finally, in
Section~\ref{S:applications} we prove the main results of the paper,
namely Theorems~\ref{T:MAINa}, \ref{T:MAIN} and
Corollaries~\ref{C:mainCont}, \ref{C:mainUnion}.

\section{Outline of the proofs of Theorems~\ref{T:MAINa} and \ref{T:MAIN}}\label{S:mainThm}

Since the proofs of Theorems~\ref{T:MAINa} and \ref{T:MAIN} consist of a series of lemmas
and propositions, for reader's
convenience we
decided to summarize here the main steps of them. To increase
readability we skip some technical details, hence the outline
is only ``informal'' view of the proofs.

In Section~\ref{S:proofOfMainThms}, for a given totally regular continuum $X$,
 we construct a convex metric $d$ and two
 Lipschitz-1 surjections $g:[0,\alpha]\to (X,d)$
 and $h:(X,d)\to [0,\beta]$ such that $\lengthd{d}{X}\le 1$ and
  \begin{equation*}\label{EQ:phi-psi-properties}
   \gamma\cdot \abs{J} \le \lengthd{d}{g(J)}\le \Gamma\cdot\abs{h\circ g(J)}
  \end{equation*}
  for every closed subinterval $J$ of $[0,\alpha]$, where
  $0<\gamma<\Gamma$ are constants not depending on $J$; see Lemma~\ref{P:props-g-h}.
  The metric $d$ and maps $g,h$ are defined as follows.

  \begin{itemize}
    \item By \cite{BNT} we can realize $X$ as the inverse limit
     $$
       X = \invlim (X_n,f_{n})
     $$
     of graphs $X_n$ with monotone surjective bonding maps $f_n:X_{n+1}\to X_n$
    ($n=1,2,\dots$), see (\ref{EQ:invlim}).
    We may assume that for every $n$ there is exactly one point $\tilde{x}_n$ of $X_n$
    having non-degenerate $f_{n}$-preimage
    $\tilde{X}_{n+1}=f^{-1}_{n}(\tilde{x}_n)$.
  \item The convex metric $d$ is defined by
   $$
         d(x,y)
         =\sup\limits_{n\in\NNN} d_n(x_n,y_n)
         \qquad\text{for }
         x=(x_n)_n,\ y=(y_n)_n\in X,
   $$
   see (\ref{EQ:def-d}). Here the metric $d_1$ on $X_1$ is defined by Lemma~\ref{L:graph:admissible2}
   and (\ref{EQ:metric-d1}), and the metrics
   $d_n$ on $X_n$ ($n\ge 2$) are defined inductively in such a way that $d_n$
   ``coincides'' with $d_{n-1}$ on $X_n\setminus \tilde{X}_{n}$ and the length of
   $\tilde{X}_{n}$ is ``very small'' when compared to the length of any
   edge of $X_{n-1}$, see (\ref{EQ:mu-n})--(\ref{EQ:def-dn}).

  \item In Lemmas~\ref{L:metric-d}--\ref{L:d-H1} we prove that
        $d$ is a convex metric on $X$ compatible with the topology
        and that $\lengthd{d}{X}\le 1$.

  \item In (\ref{EQ:g-def}) we define $g:[0,\alpha]\to X$
   as the inverse-limit map
   $$
    g = \invlim g_n,
   $$
   where $g_n:[0,\alpha_n]\to X_n$ are
   natural parametrizations of appropriately chosen paths in $X_n$; see
   (\ref{EQ:def-In})--(\ref{EQ:cd-gnk}).

  \item The map $h:X\to[0,\beta]$ is defined in (\ref{EQ:def-h}) simply by
   $$
    h(x) = d(a,x) \qquad\text{for }x\in X,
   $$
   where $a\in X$ is a point fixed in advance.

  \item In Lemma~\ref{P:props-g-h} we summarize the properties of $d$, $g$ and $h$.
  \end{itemize}

\begin{figure}[ht!]
  \includegraphics{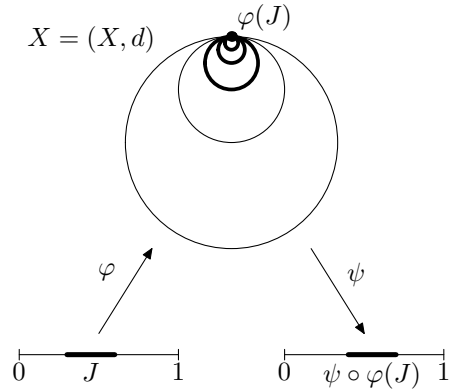}
  \caption{The maps $\varphi$ and $\psi$}
  \label{Fig:thmA}
\end{figure}

In Section~\ref{S:varphi} we construct LEL maps $\varphi:I\to X$
and $\psi:X\to I$ basically by
 linear reparametrizations of $g,h$, see Proposition~\ref{T:main},
 Corollary~\ref{C:tentLikeInterval}
  and, for an illustration, Figure~\ref{Fig:thmA}.

Using the above described tools and results, Theorems~\ref{T:MAINa} and \ref{T:MAIN}
can already be easily proved (the proofs themselves can be found in 
Section~\ref{S:applications}).

\section{Preliminaries}\label{S:preliminaries}
Here we briefly recall all the notions and results which will be needed
in the rest of the paper. The terminology is taken mainly from \cite{Kur2, Nad, Macias, Fal}.

If $M$ is a set, its cardinality is denoted by $\card M$.
The cardinality of infinite countable sets is denoted
by $\aleph_0$. If $M$ is a singleton set we often identify it with its only point.
We write $\NNN$ for the set of positive integers $\{1,2,3,\dots\}$,
$\RRR$ for the set of reals and $\III$ for
the unit interval $[0,1]$.
By an interval we mean any nonempty connected subset of $\RRR$
(possibly degenerate to a point).
For intervals
$J,J'$ we write $J\le J'$ if $t\le s$ for every $t\in J$, $s\in J'$.

By a \emph{space} we mean any nonempty metric space. A space is
called \emph{degenerate} provided it has only one point; otherwise
it is called \emph{non-degenerate}. If $E$ is a subset of a space
$X=(X,d)$ we denote the closure, the interior and the boundary of
$E$ by $\closure{E}$, $\interior{E}$ and $\boundary{E}$,
respectively, and we write $d(E)$ for the diameter of $E$. We say
that two sets $E,F\subseteq X$ are \emph{non-overlapping} if they
have disjoint interiors. For $x\in X$ and $r>0$ we denote the closed
ball with the center $x$ and radius $r$ by $B(x,r)$.
If $f$ is a map defined on $X$ and $\CCc$ is a system of subsets of $X$
we denote the system $\{f(C):\ C\in\CCc\}$ by $f(\CCc)$.

A \emph{(discrete) dynamical system} is a pair $(X,f)$ where $X=(X,d)$ is
a compact metric space and $f:X\to X$ is a continuous map.
For
$n\in\NNN$ we denote the composition $f\circ f\circ \dots\circ f$
($n$-times) by $f^n$.
A point $x\in X$ is a \emph{periodic point} of
$f$ if $f^n(x)=x$ for some $n\in\NNN$.
The \emph{topological entropy} of a dynamical system $(X,f)$ is denoted by $h(f)$.
We say that $(X,f)$ is \emph{(topologically) transitive} if for every
nonempty open sets $U,V\subseteq X$ there is $n\in\NNN$ such that
$f^n(U)\cap V\ne\emptyset$. A system $(X,f)$ is
\emph{(topologically) exact} or \emph{locally eventually onto} if
for every nonempty open subset $U$ of $X$ there is $n\in\NNN$ such
that $f^n(U)=X$. Further, $(X,f)$ is \emph{Devaney chaotic}
(\emph{exactly Devaney chaotic})
provided $X$ is infinite, $f$ is transitive (exact) and has dense set of periodic points.
Finally, a system $(X,f)$ is said to satisfy the \emph{specification property}
if for every $\eps>0$ there is $m$ such that for every $k\ge  2$, for every $k$ points $x_1,\dots,x_k\in X$, for every integers $a_1\le b_1<\dots<a_k\le b_k$ with
$a_i-b_{i-1}\ge m$ ($i=2,\dots,k$) and for every integer $p\ge m+b_k-a_1$, there
is a point $x\in X$ with $f^p(x)=x$ such that
$$
 d(f^n(x),f^n(x_i))\le \eps
 \qquad\text{for}\quad
 a_i\le n\le b_i,\ 1\le i\le k.
$$

\subsection{Continua}\label{SS:continua}
A \emph{continuum} is a connected compact metric space.
A \emph{cut point} (or a \emph{separating point}) of a continuum $X$ is any point $x\in X$ such that
$X\setminus\{x\}$ is disconnected. A point $x$ of a continuum $X$
is called a \emph{local separating point} of $X$ if there is a connected
neighborhood $U$ of $x$ such that $U\setminus\{x\}$ is not connected.
If $a, b$ are points of $X$ then
any cut point of $X$ such that $a,b$ belong to different components of
$X\setminus\{x\}$ is said to \emph{separate $a,b$}. The set of all
such points is denoted by $\Cut(a,b)$ or $\Cut_X(a,b)$. If $a=b$ then obviously $\Cut(a,b)=\emptyset$.

Let $X$ be a continuum, let $x\in X$ and let $m$ be a cardinal number.
We say that the \emph{order} of $x$ is at most $m$, written $\ord_X(x)\le m$,
provided $X$ has a local basis of open neighborhoods of $X$ the boundary of which
has cardinality at most $m$. If $m$ is the least such cardinal we write $\ord_X(x)=m$
with one exception: if $m=\aleph_0$ and
$x$ has a basis of neighborhoods with finite boundary, we write
$\ord_X(x)=\omega$.
If $\ord_X(x)=\omega$ or $\ord_X(x)$ is finite we say that $x$ has \emph{finite order}
and we write $\ord_X(x)\le \omega$.
The points of order $1$ are called \emph{end points}, the points of
order $2$ are called \emph{ordinary points} and the points of order at least $3$ are called
\emph{branch points} of $X$; the sets of all end, ordinary and branch points are
denoted by $\End(X)$, $\Ord(X)$ and $\Branch(X)$, respectively.

Tightly connected with the order of a point is the following notion, see e.g.~\cite{Why35}.
A point $x$ of a continuum $X$ is said to be of \emph{degree} $m$, written $\deg_X(x)=m$,
provided $m$ is the least cardinal such that for every $\eps>0$
there exists an \emph{uncountable} family of neighborhoods of $x$ with diameters less than $\eps$,
each having the boundary of cardinality at most $m$ and such that for any two neighborhoods
$U,V$ either $\closure{U}\subseteq V$ or $\closure{V}\subseteq U$. Again if $m=\aleph_0$
and the neighborhoods can be chosen with finite boundary we write $\deg_X(x)=\omega$
instead of $\deg_X(x)=\aleph_0$.
We say that $x$ has \emph{finite degree} and write $\deg_X(x)\le\omega$
if the degree of $x$ is either finite or $\omega$.
Trivially always $\ord_X(x)\le \deg_X(x)$ but there are examples when $\ord_X(x)< \deg_X(x)$;
e.g.~if $X$ is  the Sierpi\'nski triangle then the order of every point $x\in X$ is at most $4$ and
the degree is equal to the cardinality of the continuum \cite{Why35}.

Let $X$ be a continuum. A metric $d$ on $X$ is said to be \emph{convex}
provided for every distinct $x,y\in X$
there is $z\in X$ such that $d(x,z)=d(z,y)=d(x,y)/2$.
By \cite[Theorem~8]{BingPartSet} every locally connected
continuum admits a compatible convex metric.

\subsection{Graphs}\label{SS:graphs}
An \emph{arc} $A$ in $X$ is any homeomorphic image of the unit interval $\III$;
the end points of $A$ are the images of the points $0,1$. A \emph{simple closed curve}
is any homeomorphic image of the unit circle $\SSS^1$.

By a \emph{graph} we mean a continuum which can be written as the union
of finitely many arcs which are either disjoint or intersect only at their end
points. These arcs are called \emph{edges} and their end points are called
\emph{vertices} of the graph. So we allow vertices of order $2$ and thus
the edges and vertices are not defined uniquely.
Notice also that we do not allow simple closed curves to be edges of a graph.
By a \emph{subgraph} of a graph $G$ we mean any non-degenerate subcontinuum $H$ of $G$;
so the vertices/edges of $H$ need not be vertices/edges of $G$.

Let $G=(G,d)$ be a graph with $\lengthd{d}{G}<\infty$
and let $a,b$ be vertices of $G$.
By a \emph{path} in $G$ from $a$ to $b$ we mean a sequence
$\pi=a_0 E_1 a_1 E_2 \dots a_{k-1} E_k a_k$, where $a_i$ ($i=0,\dots,k$) are vertices of $G$ such that
$a_0=a$, $a_k=b$ and $E_j$ ($j=1,\dots,k$) are edges of $G$
with end points $a_{j-1},a_j$; the number $k$ will be called
the \emph{length} of the path $\pi$. A \emph{natural parametrization}
of a path $\pi=a_0 E_1 a_1 E_2 \dots a_{k-1} E_k a_k$
is any continuous map $\kappa:J\to G$ defined on a compact interval $J=[s,t]\subseteq \RRR$
such that $\kappa(s)=a_0$, $\kappa(t)=a_k$ and we can write
$J$ as the union $J_1\cup J_2\cup\dots\cup J_k$ of non-overlapping
closed subintervals such that $J_1\le J_2\le \dots\le J_k$ and
the restriction of $\kappa|_{J_j}:J_j\to E_j$ is an isometry for every $j=1,\dots,k$.

\subsection{Hausdorff one-dimensional measure and Lipschitz maps}\label{SS:hausdorffMeasure}
For a Borel subset $B$ of a metric
space $(X,d)$ the \emph{one-dimensional Hausdorff measure} of $B$ is defined by
$$
 \lengthd{d}{B}=\lim_{\delta\to 0} \lengthd{d,\delta}{B},
 \qquad
 \lengthd{d,\delta}{B} = \inf\left\{
   \sum_{i=1}^\infty d(E_i):\ B\subseteq \bigcup_{i=1}^\infty E_i,\ d(E_i)<\delta
 \right\}.
$$
We say that $(X,d)$ has \emph{finite length} if $\lengthd{d}{X}<\infty$.
By e.g.~\cite[Proposition~4A]{Fre92},
\begin{equation}\label{EQ:length-diam}
\lengthd{d}{C}\ge d(C)
\qquad
\text{whenever }
C
\text{ is a connected Borel subset of }
X.
\end{equation}
If $A\subseteq X$ is an arc then $\lengthd{d}{A}$ is equal to the length of $A$
\cite[Lemma~3.2]{Fal}.
In the case when $(X,d)$ is the Euclidean real line $\RRR$
and $J\subset \RRR$ is an interval $\lengthd{d}{J}$
is equal to the length of $J$ and we denote it simply by $\abs{J}$.

If $(X,d)$ is a continuum of finite length endowed with a convex metric $d$,
then it has the
so-called \emph{geodesic property} (see e.g.~\cite[Corollary~4E]{Fre92}):
for every distinct $x,y\in X$
there is an arc $A$ with end points $x,y$ such that $d(x,y)=\lengthd{d}{A}$;
any such arc $A$ is called a \emph{geodesic arc} or shortly a \emph{geodesic}.
Every subarc of a geodesic is again a geodesic. If $x,y$ are the end points
of a geodesic $A$ and $z\in A$ then $d(x,y)=d(x,z)+d(z,y)$.


A map $f:(X,d)\to (Y,\varrho)$ between metric spaces is called
\emph{Lipschitz} with a Lipschitz constant $L\ge 0$, shortly
\emph{Lipschitz-$L$}, provided $\varrho(f(x),f(x')) \le L\cdot
d(x,x')$ for every $x,x'\in X$; the smallest such $L$ is denoted by
$\Lip(f)$ and is called the \emph{Lipschitz constant} of $f$. If
$f:X\to Y$ is Lipschitz-$L$ then $\lengthd{\varrho}{f(B)} \le L\cdot \lengthd{d}{B}$
for every Borel set $B\subset X$ such that $f(B)$ is Borel-measurable
\cite[p.~10]{Fal}.
We omit the proof of the following lemma.

\begin{lemma}\label{L:CX} Let $X=(X,d)$ be a non-degenerate (totally regular) continuum
of finite length and let $\varphi:I\to X$ be a Lipschitz surjection.
Put
$$
 \CCc=\varphi(\CCc_I) = \{\varphi(J):\ J\text{ is a non-degenerate closed subinterval of } J\}.
$$
Then the following hold:
\begin{enumerate}
    \item[(1)] $\CCc$ is a dense system of subcontinua of $X$;
    \item[(2)] for every $\eps>0$ the space $X$ can be covered by some $C_1,\dots,C_k\in\CCc$
     satisfying $\lengthd{d}{C_i}<\eps$ for $i=1,\dots,k$.
\end{enumerate}
\end{lemma}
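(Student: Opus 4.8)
The plan is to prove both assertions directly from the hypothesis that $\varphi:I\to X$ is a Lipschitz surjection, using the fact that a Lipschitz image of an interval has finite length and that $I$ is compact and connected.

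\medskip

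\emph{Proof of (1).} First I would check that each $\varphi(J)$ is indeed a non-degenerate subcontinuum for a suitable choice of subintervals. Since $J$ is a compact connected subset of $\RRR$ and $\varphi$ is continuous, $\varphi(J)$ is a subcontinuum of $X$; it is non-degenerate as long as $\varphi$ is non-constant on $J$, which can always be arranged by shrinking $J$ (if $\varphi$ were constant on some non-degenerate $J$, density still only requires \emph{some} member inside each open set). To prove density, let $U\subseteq X$ be nonempty and open. Pick $x\in U$ and, using surjectivity, $t\in I$ with $\varphi(t)=x$. By continuity of $\varphi$ there is $\delta>0$ with $\varphi(J)\subseteq U$ for $J=[t-\delta,t+\delta]\cap I$, and $J$ is a non-degenerate closed subinterval. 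If $\varphi$ happens to be constant on this $J$, then, because $\varphi$ is a surjection onto the non-degenerate continuum $X$ and hence non-constant, one can find nearby a non-degenerate closed subinterval $J'$ on which $\varphi$ is non-constant and whose image still lies in $U$ (shrink $U$ to a smaller ball around a point of non-constancy, or use that the set of $t$ at which $\varphi$ is locally constant is open with image a single point, so it cannot cover a preimage of $U$ unless $U$ is a singleton, impossible since $X$ is non-degenerate and locally connected—this needs a short argument). Thus $C=\varphi(J')\in\CCc$ lies in $U$, proving $\CCc$ is dense.

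\medskip

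\emph{Proof of (2).} Fix $\eps>0$. Since $\varphi$ is Lipschitz-$L$ for some $L$, for any closed subinterval $J=[s,t]\subseteq I$ we have $\lengthd{d}{\varphi(J)}\le L\cdot\lengthd{d_I}{J}=L\,\abs{J}$, using the quoted fact that Lipschitz maps do not increase one-dimensional Hausdorff measure (and $\varphi(J)$ is a subcontinuum, hence Borel). Now partition $I=[0,1]$ into $k$ consecutive non-overlapping closed subintervals $J_1\le J_2\le\dots\le J_k$ each of length $1/k$, where $k$ is chosen large enough that $L/k<\eps$. Put $C_i=\varphi(J_i)$. Then $\bigcup_i C_i=\varphi(\bigcup_i J_i)=\varphi(I)=X$ by surjectivity, each $C_i\in\CCc$ (shrinking slightly or arguing as above if some $J_i$ maps to a point—but a degenerate $C_i$ simply need not be listed, and the remaining ones still cover all but finitely many points; to be safe one notes the degenerate images are single points which are already covered by adjacent non-degenerate pieces by continuity), and $\lengthd{d}{C_i}\le L\abs{J_i}=L/k<\eps$, as required.

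\medskip

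The main obstacle is the bookkeeping around \emph{degenerate} images: a priori a Lipschitz surjection $\varphi:I\to X$ could be constant on some subintervals, and the definition of $\CCc$ ranges only over those subintervals whose image is non-degenerate (implicitly, since $\CCc$ is to be a system of non-degenerate subcontinua). The clean fix is the observation that the union $Z$ of all open subintervals of $I$ on which $\varphi$ is locally constant is open in $I$, and $\varphi$ is locally constant—hence, by connectedness of each component, constant—on each component of $Z$; its complement is where the ``honest'' subintervals live. Since $X$ is non-degenerate, $I\setminus Z\ne\emptyset$, and in fact $\varphi(I\setminus Z)$ is dense in $X$ (its complement in $X$ is contained in the at-most-countable set of values taken on components of $Z$, whereas $X$ is uncountable because it is a non-degenerate continuum). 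This yields both the density in (1) and the covering in (2) after discarding degenerate pieces and re-covering the countably many omitted points using continuity. All remaining steps are routine, which is presumably why the authors omit the proof.
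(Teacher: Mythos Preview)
Your argument is correct. The paper explicitly omits the proof of this lemma (``We omit the proof of the following lemma''), so there is nothing to compare against; the route you take---continuity and surjectivity of $\varphi$ for (1), and the Lipschitz bound $\lengthd{d}{\varphi(J)}\le L\,\abs{J}$ together with a partition of $I$ into short subintervals for (2)---is the natural one and is presumably what the authors had in mind. The degeneracy issue you flag (that $\varphi$ might be constant on some subintervals) is genuine for an arbitrary Lipschitz surjection, and your fixes are valid; note, however, that in the paper's applications the relevant map $\varphi_{X,a,b}$ is a linear reparametrization of a natural path parametrization and hence is nowhere locally constant, so the difficulty does not actually arise there.
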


\subsection{Totally regular continua}\label{SS:totallyRegular}
By e.g.~\cite{Kur2, Nik}, a
continuum $X$ is called
\begin{itemize}
    \item a \emph{dendrite} if it is locally connected and contains no simple closed curve;
    \item a \emph{local dendrite} if it is locally connected and contains at most finitely many
     simple closed curves;
    \item \emph{completely regular} if it contains no
     non-degenerate nowhere dense subcontinuum;
    \item \emph{totally regular} if for every $x\in X$ and every countable set $P\subseteq X$
     there is a basis of neighborhoods of $x$ with finite boundary not intersecting $P$;
    \item \emph{regular} if every $x\in X$ has a basis of neighborhoods with
     finite boundary, i.e.~$\ord_X(x)\le \omega$ for every $x$;
    \item \emph{hereditarily locally connected} if every subcontinuum of $X$ is locally
     connected;
    \item \emph{rational} if every $x\in X$ has a basis of neighborhoods with
     countable boundary, i.e.~$\ord_X(x)\le \aleph_0$ for every $x$;
    \item a \emph{curve} if it is one-dimensional.
\end{itemize}
Notice that (local) dendrites as well as completely regular continua
are totally regular and (totally) regular continua are hereditarily locally connected, hence they are
locally connected curves.
Totally regular continua are also called \emph{continua of finite degree}
since
they are just those continua $X$ for which every point $x$
has finite degree $\deg_X(x)\le \omega$ \cite{Eil38}.
This and other conditions equivalent to total regularity
are summarized in the following theorem.

\begin{theorem}\label{T:totallyRegularEquivalentConditions}
For a continuum $X$ the following are equivalent:
\begin{enumerate}
    \item[(1)] $X$ is totally regular;
    \item[(2)] $X$ is of finite degree (i.e.~$\deg_X(x)\le\omega$ for every $x$);
    \item[(3)] $X$ has a (convex) metric $d$ such that $(X,d)$ has finite length;
    \item[(4)] $X$ has a (convex) metric $d$ such that $(X,d)$ is a Lipschitz image
     of the unit interval;
    \item[(5)] $X$ has a (convex) metric $d$ such that
     for every $x\in X$ and for almost every $r>0$ the boundary of the closed ball
     $B(x,r)$ is finite;
    \item[(6)] every non-degenerate
      subcontinuum of $X$ contains uncountably many local separating points;
    \item[(7)] $X$ is locally connected and for every disjoint closed sets $E,F\subseteq X$
     there are disjoint perfect sets $N_1,\dots,N_k$ such that
     every subcontinuum of $X$ intersecting both $E$ and $F$ contains some $N_i$.
\end{enumerate}
\end{theorem}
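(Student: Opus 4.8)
The plan is to prove the equivalences by a combination of direct implications and appeals to the literature, organized around condition (3) (existence of a finite-length metric) as the central hub, since this is the property most directly connected to the rest of the paper. First I would establish the classical core $(1)\Leftrightarrow(2)\Leftrightarrow(6)$: the equivalence of total regularity with finite degree at every point is exactly the theorem of Eilenberg \cite{Eil38} (this is already quoted in the surrounding text), and the equivalence with (6), "every non-degenerate subcontinuum contains uncountably many local separating points," is essentially Whyburn's characterization \cite{Why35,EH}; one direction is easy (a point of finite degree that is not an end point is a local separating point, and the uncountable family in the definition of degree produces uncountably many of them in any subcontinuum), while the converse uses the fact that if some point failed to have finite degree then one could find a subcontinuum in which only countably many points locally separate. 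I would state these as consequences of the cited work rather than reprove them.

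Next I would handle $(3)\Leftrightarrow(4)$. The implication $(4)\Rightarrow(3)$ is immediate: a Lipschitz image of $I$ has finite one-dimensional Hausdorff measure, by the inequality $\lengthd{d}{f(B)}\le L\cdot\lengthd{d}{B}$ quoted from \cite[p.~10]{Fal} applied with $B=I$. For $(3)\Rightarrow(4)$, given a convex metric $d$ with $\lengthd{d}{X}<\infty$, the space has the geodesic property (as recalled in Section~\ref{SS:hausdorffMeasure}), and a standard space-filling argument — recursively subdividing and connecting a dense sequence of points by geodesics, controlling total length — produces a surjective Lipschitz curve $I\to X$; this is precisely the kind of construction carried out in detail later in the paper (Sections~\ref{S:proofOfMainThms}--\ref{S:varphi}), so here I would only sketch it or forward-reference it. Then $(4)\Rightarrow(1)$: a Lipschitz image of $I$ is a Peano continuum of finite length, and for any $x$ and countable $P$ one uses part (5)-type reasoning — almost every metric sphere $B(x,r)$ meets the curve in a finite set and can be perturbed to avoid $P$ — to produce the required neighborhood basis; alternatively invoke $(3)\Rightarrow(2)\Rightarrow(1)$ once (2) is in hand.

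For $(3)\Leftrightarrow(5)$, the implication $(5)\Rightarrow(3)$ follows by a coarea/integration estimate: if $\partial B(x,r)$ is finite for a.e.\ $r$, then $\lengthd{d}{X}<\infty$ can be bounded by integrating the cardinality of the spheres over $r$ (this is the classical argument of Eilenberg–Harrold type \cite{EH}, essentially the "Eilenberg inequality"); and $(3)\Rightarrow(5)$ is the observation that in a finite-length convex metric the function $r\mapsto\lengthd{d}{\partial B(x,r)}$ is integrable, hence finite a.e., and a connected space of finite zero-dimensional... — more precisely one shows $\partial B(x,r)$ is finite for a.e.\ $r$ because the total length is finite and distinct spheres are disjoint. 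Finally, $(2)\Rightarrow(3)$ is the substantive embedding theorem: a continuum all of whose points have finite degree admits a convex finite-length metric — this is the realization result underlying the whole paper, obtained via the inverse-limit representation of \cite{BNT} together with the metric construction there and in Section~\ref{S:proofOfMainThms}; and $(1)\Leftrightarrow(7)$ is the separation-by-perfect-sets characterization, which I would cite from \cite{Nik} (the forward implication packages the finite-degree neighborhoods along a path between $E$ and $F$, the reverse uses the perfect sets $N_i$ to force finite degree).

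The main obstacle is the single hard arrow $(2)\Rightarrow(3)$: producing a \emph{convex} metric of \emph{finite total length} on an abstract finite-degree continuum. The naive approach — take any admissible metric — fails because nowhere-dense subcontinua can force infinite length, and ensuring convexity simultaneously with a summable "budget" of lengths across the infinitely many edges appearing in the inverse-limit graphs $X_n$ requires exactly the delicate inductive construction (making the length of $\tilde X_n$ negligible against the edges of $X_{n-1}$) that occupies Section~\ref{S:proofOfMainThms}. In a clean writeup I would isolate this as a separate lemma (as the paper in fact does) and let the proof of the theorem merely assemble the cited pieces.
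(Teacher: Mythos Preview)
Your overall architecture---collect the classical equivalences $(1)\Leftrightarrow(2)\Leftrightarrow(6)\Leftrightarrow(7)$ from the literature and then tie (3), (4), (5) into that core---is exactly what the paper does, but your treatment of (5) contains a genuine error. You claim $(5)\Rightarrow(3)$ by ``integrating the cardinality of the spheres over $r$'' and call this the Eilenberg inequality; but the Eilenberg/coarea inequality reads
\[
  \lengthd{d}{X}\ \ge\ \int_0^{\infty}\card^*\bigl(\partial B(x,r)\bigr)\,dr,
\]
which is a \emph{lower} bound on $\lengthd{d}{X}$, not an upper bound. So it proves $(3)\Rightarrow(5)$ (finite length forces the spheres to be finite for a.e.\ $r$), which is precisely how the paper uses it, and it says nothing in the direction $(5)\Rightarrow(3)$. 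In fact there is no reason to expect the particular metric witnessing (5) to have finite length; one must pass through another condition. The paper closes the loop by the trivial observation $(5)\Rightarrow(2)$: the balls $B(x,r)$ with $r$ ranging over a full-measure subset of $(0,\eps)$ form, for every $\eps>0$, an uncountable monotone chain of neighborhoods with finite boundary, so $\deg_X(x)\le\omega$. You do not mention this implication, and without it your link from (5) back to the rest is broken.

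Two smaller points of divergence. For $(3)\Rightarrow(4)$ the paper simply cites \cite[Lemma~2A]{Fre94}; your plan to forward-reference the construction of Sections~\ref{S:proofOfMainThms}--\ref{S:varphi} would work but is heavier machinery than needed here (and those sections already assume the present theorem). Similarly, you flag $(2)\Rightarrow(3)$ as the ``main obstacle'' and propose to extract it from the inverse-limit construction; the paper instead cites \cite{EH,Har44,Eil44} for this, keeping Theorem~\ref{T:totallyRegularEquivalentConditions} purely a compilation of known results. Your route is not wrong, but it inverts the logical dependence the paper intends: the theorem is a preliminary, and Section~\ref{S:proofOfMainThms} builds on it rather than the other way around.
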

\begin{proof}
 The equivalence of (1), (2), (3), (6) and (7) follows from
 \cite{Eil38}, \cite{Why35}, \cite{EH}, \cite{Har44} and \cite{Eil44}. Immediately
 (4) implies (3) and (5) implies (2).
 By e.g.~\cite[Lemma~2A]{Fre94}, (3) implies (4).
 Finally, the fact that (3) implies (5) follows from the following inequality
 (see e.g.~\cite[1A(f)]{Fre92} or \cite[Theorem~7.7]{Mattila})
 applied to the map $f:X\to\RRR$, $f(x')=d(x,x')$.
 The inequality says that if $f:(X,d)\to (Y,\varrho)$ is
 Lipschitz-$1$ then
 $$
  \lengthd{d}{X} \ge
  \int_Y^* \card^* f^{-1}(y)\, d\lengthd{\varrho}{y}
 $$
 where $\card^*$ denotes the cardinality of a finite set and $\infty$ for infinite sets
 and $\int_Y^* h\,d\mu$ is the infimum of integrals $\int_Y g\,d\mu$ as
 $g$ runs over $\mu$-measurable functions from $Y$ to $[0,\infty]$ such that
 $g\ge h$.
 Hence if $(X,d)$ has finite length then $f^{-1}(y)$ is finite
 for $\HHh_\varrho^1$--almost every $y\in Y$.
\end{proof}

By \cite{Eil44}, if $d$ is a metric on a totally regular continuum $X$ with
$\lengthd{d}{X}<\infty$ then there is a (unique) convex metric $d^*$ on $X$ such that
$d^*(x,y)\ge d(x,y)$ for every $x,y\in X$
and $\lengthd{d^*}{B}=\lengthd{d}{B}$ for every  Borel $B\subseteq X$; it is defined
by
$
 d^*(x,y)=\inf\left\{
  \lengthd{d}{A}:\ A \text{ is an arc from } x \text{ to } y
 \right\}
$.

\subsection{Monotone inverse limits}\label{SS:invlim}
An \emph{inverse sequence} is a sequence $(X_n,f_n)_{n\in\NNN}$ where
$X_n$ is a compact metric space and $f_n:X_{n+1}\to X_n$ is a continuous map
for every $n\in\NNN$. The \emph{inverse limit} of an inverse sequence
$(X_n,f_n)_{n\in\NNN}$ is the subspace
$X_{\infty}=\invlim (X_n,f_n)$ of the product
$\prod\limits_{n=1}^\infty X_n$ given by
$$
 X_{\infty}
 = \invlim (X_n,f_n)
 = \left\{
  (x_n)_{n=1}^\infty \in \prod\limits_{n=1}^\infty X_n:\ f_n(x_{n+1})=x_n
  \text{ for every } n\in\NNN
 \right\}.
$$
The maps $f_n$ are called \emph{bonding maps}. For $n\in\NNN$ the
projection from $X_\infty$ onto the $n$-th coordinate will be denoted by $\pi_n:X_\infty\to X_n$.
From now on we will assume that every $f_n$ (and hence every $\pi_n$) is surjective.

A fundamental result states that the inverse limit of continua is a continuum
\cite[Theorem~2.1]{Nad}. Moreover, if the dimension of every $X_n$
is at most $d$ then also $\dim X_\infty\le d$ \cite[Theorem~1.13.4]{EngDim}.
Hence the inverse limit of curves is a curve.

The special case important for us is when the bonding maps are monotone.
(Recall that a continuous map $f:X\to Y$ is \emph{monotone} if every preimage $f^{-1}(y)$ is connected.) Then also every projection map $\pi_n$ is monotone
\cite[Proposition~2.1.13]{Macias}. The following theorem combines
\cite[Corollary~2.1.14]{Macias}, \cite[Theorem~3.6]{Nik} and \cite[Theorem~10.36]{Nad}.

\begin{theorem}\label{T:monotoneInvLim1}
 Let $X_\infty=\invlim (X_n,f_n)$ be the inverse limit of continua $X_n$
 with surjective monotone bonding maps.
 If every $X_n$ is locally connected (totally regular, a dendrite)
 then also $X_\infty$ is locally connected (totally regular, a dendrite).
\end{theorem}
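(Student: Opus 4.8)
The plan is to establish the three implications one at a time (as the cited sources do), held together by a common toolkit. Write $X_\infty=\invlim(X_n,f_n)$ with a standard product metric $d_\infty$ on $\prod_n X_n$, and let $\pi_n\colon X_\infty\to X_n$ be the projections, which are monotone and surjective (as recalled in the excerpt). I would repeatedly use the standard facts about a monotone surjection $\pi$ of continua: the preimage of a subcontinuum is a subcontinuum, and the preimage of a connected open set is connected (both follow by splitting the preimage into clopen pieces, noting that each fibre lies in one piece, and using that $\pi$ is a closed map). I would also use: (i) every subcontinuum $C\subseteq X_\infty$ equals $\invlim\bigl(\pi_n(C),\,f_n|_{\pi_{n+1}(C)}\bigr)$ --- indeed, for a point $(y_n)\in\prod_n\pi_n(C)$ with $f_n(y_{n+1})=y_n$ the sets $\pi_n^{-1}(y_n)\cap C$ are nonempty, closed and nested, so their intersection is the single point $(y_n)$ and it lies in $C$; and (ii) the inverse limit of a sequence of (nonempty) continua is a continuum even when the bonding maps are not surjective, by passing to the eventual images $\bigcap_m (g_n\cdots g_{n+m-1})(K_{n+m})$.

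\textbf{Locally connected.} I would show $X_\infty$ has property $S$ --- for every $\eps>0$ it is a finite union of subcontinua of diameter $<\eps$ --- which for a continuum is equivalent to local connectedness. Given $\eps>0$, pick $N$ with $\sum_{m>N}2^{-m}<\eps/2$, and use uniform continuity of the bonding compositions $f_m\circ\cdots\circ f_{N-1}\colon X_N\to X_m$ ($m\le N$) to choose $\delta>0$ so small that a subset of $X_N$ of diameter $<\delta$ contributes less than $\eps/2$ to the first $N$ coordinates of $d_\infty$. Since $X_N$ is a Peano continuum it is a finite union of subcontinua $C_1,\dots,C_k$ of diameter $<\delta$; then $X_\infty=\bigcup_i\pi_N^{-1}(C_i)$, each $\pi_N^{-1}(C_i)$ is a subcontinuum, and since $\pi_N(\pi_N^{-1}(C_i))=C_i$ controls the first $N$ coordinates while the tail contributes $<\eps/2$, its $d_\infty$-diameter is $<\eps$. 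This gives property $S$.

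\textbf{Dendrite.} Granting local connectedness, it remains to show $X_\infty$ is hereditarily unicoherent, since a nondegenerate locally connected continuum is a dendrite precisely when it is hereditarily unicoherent (a simple closed curve is the union of two arcs meeting in two points, hence not unicoherent; and a Peano continuum with no simple closed curve is a dendrite). Let $A,B\subseteq X_\infty$ be subcontinua with $A\cap B\ne\emptyset$. Reasoning as in (i) applied to $A$ and to $B$, one checks $A\cap B=\invlim(K_n,\,f_n|)$ with $K_n:=\pi_n(A)\cap\pi_n(B)$; each $K_n$ is nonempty and, being the intersection of two subcontinua of the dendrite $X_n$, is connected. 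By (ii), $\invlim(K_n,\cdot)$ is a continuum, so $A\cap B$ is connected; hence $X_\infty$ is a dendrite.

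\textbf{Totally regular, and the main difficulty.} Here I would use the internal characterisations in Theorem~\ref{T:totallyRegularEquivalentConditions}: it is enough to show that every nondegenerate subcontinuum of $X_\infty$ contains uncountably many local separating points (equivalently, that every point of $X_\infty$ has finite degree). For a nondegenerate subcontinuum $C$, for all large $n$ the projection $\pi_n(C)$ is a nondegenerate subcontinuum of the totally regular $X_n$, hence has uncountably many local separating points, and each such point $q\in\pi_n(C)$ pulls back under $\pi_n$ to a subcontinuum $\pi_n^{-1}(q)$ that locally separates $X_\infty$ and meets $C$. The genuinely delicate step --- and the step I expect to be the main obstacle --- is to convert these ``locally separating subcontinua'' into honest local separating points lying in $C$: the naive idea of restricting $\pi_n$ to $C$ fails because $\pi_n|_C$ need not be monotone (the fibre $\pi_n^{-1}(q)\cap C$ is an intersection of two subcontinua and may be disconnected), so one must instead keep track of the degrees of points and exploit that the fibres of $\pi_n$ shrink uniformly in diameter as $n\to\infty$. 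This analysis is exactly \cite[Theorem~3.6]{Nik}, which I would cite for this implication; together with \cite[Corollary~2.1.14]{Macias} for the locally connected case and \cite[Theorem~10.36]{Nad} for the dendrite case, the theorem follows.
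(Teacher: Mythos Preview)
The paper does not actually prove this theorem: it merely attributes the three implications to \cite[Corollary~2.1.14]{Macias}, \cite[Theorem~3.6]{Nik} and \cite[Theorem~10.36]{Nad} and states the result without further argument. Your proposal ends at exactly the same place, citing the very same three references, so in that sense you match the paper precisely.

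What you add beyond the paper is a self-contained sketch of the locally connected case (via property~$S$ and pulling back a finite cover of $X_N$ by small subcontinua through the monotone projection $\pi_N$) and of the dendrite case (via hereditary unicoherence, using that $A\cap B=\invlim(\pi_n(A)\cap\pi_n(B))$ and that intersections of subcontinua in a dendrite are connected). Both sketches are correct; your handling of the non-surjective bonding maps in fact~(ii) by passing to eventual images is the standard device and is sound. For the totally regular case you correctly identify the genuine obstacle --- that $\pi_n|_C$ need not be monotone, so local separating points of $\pi_n(C)$ do not pull back na\"ively to local separating points of $C$ --- and defer to Nikiel's paper, which is exactly what the present paper does as well. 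So your treatment is at least as complete as the paper's, and arguably more informative.
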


It is often the case that a continuum $X$ is homeomorphic to the inverse limit of some ``simpler'' continua $X_n$.
For example every continuum is the inverse limit of compact connected polyhedra
\cite[Theorem~2.15]{Nad} and every curve is the inverse limit of graphs \cite[Theorem~1.13.2]{EngDim}.
Fundamental results for \emph{monotone} inverse limits and locally connected curves are summarized
below, see \cite[Theorem~2.2]{Nik}, \cite[Theorem~3]{BNT} and e.g.~the proof of
\cite[Theorem~10.32]{Nad}.

\begin{theorem}\label{T:monotoneInvLim2}
Every locally connected curve (totally regular continuum, dendrite)
is the monotone inverse limit of regular continua (graphs, trees).
\end{theorem}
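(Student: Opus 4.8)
I would prove this by realizing $X$ as the inverse limit of the quotient maps coming from a \emph{refining} sequence of monotone upper semicontinuous (usc) decompositions of $X$ whose mesh tends to $0$ and whose quotients are of the required type. So the plan splits into a soft ``bookkeeping'' part (how such a sequence of decompositions yields $X$ back as a monotone inverse limit) and a hard ``existence'' part (producing the decompositions); I treat the dendrite case of the latter by hand and indicate where the heavy machinery of \cite{Nik,BNT} is needed in the other cases.

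\textbf{The reduction.} Suppose we have monotone usc decompositions $\GGg_1\succ\GGg_2\succ\cdots$ of $X$ into subcontinua, with $\GGg_{n+1}$ refining $\GGg_n$, with $\operatorname{mesh}(\GGg_n)\to 0$, and with each quotient $X_n:=X/\GGg_n$ a regular continuum (resp.\ a graph, resp.\ a tree). Let $q_n\colon X\to X_n$ be the quotient map; since $\GGg_{n+1}$ refines $\GGg_n$ there is a unique continuous $f_n\colon X_{n+1}\to X_n$ with $q_n=f_n\circ q_{n+1}$. Each $f_n$ is surjective (as $q_n$ is), and it is monotone because $f_n^{-1}(q_n(x))=q_{n+1}(G)$ for the element $G\in\GGg_n$ containing $x$, and $G$ is connected. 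Each $X_n$ is a continuum (quotient of a continuum by a usc decomposition), so $(X_n,f_n)$ is an inverse sequence of the kind considered in Theorem~\ref{T:monotoneInvLim1}. The natural map $\iota\colon X\to\invlim(X_n,f_n)$, $\iota(x)=(q_n(x))_n$, is well defined and continuous; it is onto by the usual nested‑compacta argument (for $(y_n)_n$ in the limit the sets $q_n^{-1}(y_n)$ are nonempty, compact and decreasing); and it is one‑to‑one because for $x\ne y$, as soon as $\operatorname{mesh}(\GGg_n)<d(x,y)$ no element of $\GGg_n$ contains both $x$ and $y$, so $q_n(x)\ne q_n(y)$. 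A continuous bijection of compacta is a homeomorphism, so $X\cong\invlim(X_n,f_n)$, which finishes the proof once the decompositions are in hand.

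\textbf{The dendrite case.} Here the decompositions can be written down explicitly. Choose an increasing sequence of finite subtrees $T_1\subseteq T_2\subseteq\cdots$ of $X$ with $\bigcup_nT_n$ dense and such that every component of $X\setminus T_n$ has diameter $<1/n$; this is possible because in a dendrite every component of the complement of a subcontinuum has exactly one boundary point, so if $T_n$ contains a $1/(2n)$‑net then every complementary component is contained in a ball of radius $1/(2n)$ around its boundary point. Let $r_n\colon X\to T_n$ be the first‑point retraction onto $T_n$, which is continuous; its fibres form a monotone usc decomposition $\GGg_n$ of $X$ into subcontinua, with quotient homeomorphic to the finite tree $T_n$. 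Clearly $\operatorname{mesh}(\GGg_n)\le 1/n$, and $T_n\subseteq T_{n+1}$ gives $r_n=\rho_n\circ r_{n+1}$ for the retraction $\rho_n\colon T_{n+1}\to T_n$, so $\GGg_{n+1}$ refines $\GGg_n$. Thus a dendrite is a monotone inverse limit of trees.

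\textbf{The general case and the main obstacle.} For a totally regular continuum (resp.\ an arbitrary locally connected curve) the one‑step input I would use is: for every $\eps>0$ there is a monotone surjection of $X$ onto a graph (resp.\ onto a regular continuum) with all fibres of diameter $<\eps$ — equivalently, a monotone usc decomposition of $X$ into subcontinua of diameter $<\eps$ with graph (resp.\ regular) quotient. This is exactly the content of \cite[Theorem~3]{BNT} and \cite[Theorem~2.2]{Nik}: one starts from a finite cover of $X$ by connected sets of small diameter — with finite boundaries in the totally regular case, using condition (5) of Theorem~\ref{T:totallyRegularEquivalentConditions} — and collapses the small pieces of the complement of the union of these boundaries, amalgamating overlapping ones; one‑dimensionality of $X$ (and finiteness of the boundaries) forces the quotient to be a graph (a regular continuum). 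The genuine difficulty, which I expect to be the crux, is not this single step but arranging that the decompositions \emph{refine one another} while keeping mesh control and the graph/regular quotient: the elementwise intersection $\{G\cap H:G\in\GGg,\ H\in\HHh\}$ of two monotone usc decompositions need not consist of connected sets, and passing to components can destroy upper semicontinuity or inflate the quotient, and chaining of amalgamated pieces can blow up the mesh. The resolution one must import from \cite{Nik,BNT} is to build $\GGg_{n+1}$ \emph{fibrewise over the stage‑$n$ quotient}: subdivide the graph/regular continuum $X_n$ into small cells, pull them back through $q_n$, and apply the one‑step construction inside each block, arranging that the assembled global decomposition is usc, refines $\GGg_n$, has mesh $<1/(n+1)$, and again has a graph/regular/tree quotient. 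Granting this, the reduction above yields the theorem.
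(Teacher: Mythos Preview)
The paper does not give its own proof of Theorem~\ref{T:monotoneInvLim2}; it simply cites \cite[Theorem~2.2]{Nik}, \cite[Theorem~3]{BNT} and the proof of \cite[Theorem~10.32]{Nad}. So there is nothing to compare against directly. Your outline is in fact the standard route taken in those references: the ``reduction'' part is exactly the decomposition/inverse-limit bookkeeping in \cite{Nik,BNT}, and your dendrite argument via first-point retractions onto an increasing sequence of finite trees is precisely \cite[Theorem~10.32]{Nad}. For the totally regular and locally connected cases you correctly isolate the hard step (producing a \emph{refining} sequence of $\eps$-decompositions with graph/regular quotients) and defer to \cite{Nik,BNT}, which is exactly what the present paper does too.

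One genuine gap, small but real: in the dendrite case you assert that if $T_n$ contains a $1/(2n)$-net then every component $C$ of $X\setminus T_n$ lies in the ball $B(p,1/(2n))$ about its unique boundary point $p$. Your implicit reasoning is that for $x\in C$ there is $y\in T_n$ with $d(x,y)<1/(2n)$, the unique arc $[x,y]$ passes through $p$, and hence $d(x,p)\le d(x,y)$. That last inequality is \emph{not} automatic for an arbitrary compatible metric on $X$ (think of a dendrite embedded in the plane with a branch that curls back close, in the Euclidean metric, to a distant point of $T_n$). It does hold if $d$ is convex, since then the unique arc $[x,y]$ is a geodesic and $d(x,p)+d(p,y)=d(x,y)$; so you should first invoke Bing's theorem \cite{BingPartSet} (already recalled in the preliminaries) to equip $X$ with a convex metric before running the $1/(2n)$-net argument. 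With that fix, your dendrite proof is correct and complete, and for the remaining two cases you are, like the paper, relying on \cite{Nik,BNT}.
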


Notice that for non-degenerate totally regular continua and for non-degenerate dendrites
the bonding maps
$f_n$ in the previous theorem can be chosen such that
$f_n^{-1}(x)$ is non-degenerate for exactly one point $x$.

\medskip

The following theorem gives us a way to define the so-called \emph{induced map}
between inverse limits, see e.g.~\cite[Theorems~2.1.46--48]{Macias}.

\begin{theorem}\label{T:inducedMap}
 Let $(X_n,f_n)_n$, $(X_n',f_n')_n$ be inverse sequences and let
 $g_n:X_n\to X_n'$ ($n\in\NNN$) be continuous maps such that
 for every $n$ the left-hand side diagram commutes:
\begin{center}

\begin{tabular}{ccc}

$
  \begin{CD}
   X_{n+1} @>{f_n}>> X_n \\
   @V{g_{n+1}}VV @VV{g_n}V \\
   X_{n+1}' @>{f_n'}>> X_n'
  \end{CD}
$
& \qquad\qquad\qquad &

$
  \begin{CD}
   X_{\infty} @>{\pi_n}>> X_n \\
   @V{g_{\infty}}VV @VV{g_n}V \\
   X_{\infty}' @>{\pi_n'}>> X_n'
  \end{CD}
$
\end{tabular}
\end{center}
Then there is a unique continuous map $g_\infty=\invlim g_n: X_\infty \to X_\infty'$
such that
for every $n$ the right-hand side diagram commutes.
The map $g_\infty$ is given by
$$
 g_\infty(x_1,x_2,x_3,\dots) = (g_1(x_1), g_2(x_2), g_3(x_3), \dots).
$$
Moreover, if every $g_n$ is surjective (injective) then $g_\infty$ is surjective (injective).
\end{theorem}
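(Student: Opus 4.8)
The plan is to take the displayed formula as the \emph{definition} of $g_\infty$ and then check, in order, that it is well defined, that it is continuous and makes the right-hand square commute, that it is the only such map, and finally that it inherits surjectivity and injectivity from the $g_n$.

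First I would verify that $g_\infty((x_n)_n)=(g_n(x_n))_n$ really lands in $X_\infty'$. If $(x_n)_n\in X_\infty$, i.e.\ $f_n(x_{n+1})=x_n$ for all $n$, then commutativity of the left-hand square gives $f_n'(g_{n+1}(x_{n+1}))=g_n(f_n(x_{n+1}))=g_n(x_n)$, which is precisely the relation defining membership in $X_\infty'$. Continuity is then immediate: $g_\infty$ is the restriction to $X_\infty$ of the product map $\prod_n g_n:\prod_n X_n\to\prod_n X_n'$, and the latter is continuous because each of its coordinate components $g_n\circ\pi_n$ is. By construction $\pi_n'\circ g_\infty=g_n\circ\pi_n$, which is exactly commutativity of the right-hand square. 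For uniqueness, if $h:X_\infty\to X_\infty'$ also satisfies $\pi_n'\circ h=g_n\circ\pi_n$ for every $n$, then the $n$-th coordinate of $h(x)$ equals $g_n(\pi_n(x))$, the same as that of $g_\infty(x)$; since a point of $X_\infty'$ is determined by its coordinates, $h=g_\infty$. Injectivity is equally quick: if every $g_n$ is injective and $g_\infty(x)=g_\infty(x')$, then $g_n(x_n)=g_n(x_n')$ and hence $x_n=x_n'$ for all $n$, so $x=x'$.

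The only step requiring a genuine argument — and the one I expect to be the main (though mild) obstacle — is surjectivity. Assume each $g_n$ is surjective and fix $y=(y_n)_n\in X_\infty'$. For $n\in\NNN$ set $L_n=(g_n\circ\pi_n)^{-1}(y_n)=\{x\in X_\infty:\ g_n(x_n)=y_n\}$; each $L_n$ is a nonempty closed, hence compact, subset of $X_\infty$ (nonempty because $g_n$ and $\pi_n$ are surjective). The crucial point is that the sequence $(L_n)_n$ is decreasing. Iterating the left-hand squares gives, for $n<m$,
$$
 g_n\circ f_n\circ f_{n+1}\circ\dots\circ f_{m-1}
 = f_n'\circ f_{n+1}'\circ\dots\circ f_{m-1}'\circ g_m ,
$$
so if $x\in X_\infty$ satisfies $g_m(x_m)=y_m$, then, using $x_n=(f_n\circ\dots\circ f_{m-1})(x_m)$ and $y_n=(f_n'\circ\dots\circ f_{m-1}')(y_m)$, one gets $g_n(x_n)=y_n$; that is, $L_m\subseteq L_n$. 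A decreasing sequence of nonempty compact sets has nonempty intersection, and any $x\in\bigcap_n L_n$ satisfies $g_n(x_n)=y_n$ for every $n$, i.e.\ $g_\infty(x)=y$. Hence $g_\infty$ is surjective, which completes the proof.
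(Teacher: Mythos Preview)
Your proof is correct. The paper does not actually prove this theorem; it is stated in the preliminaries with a reference to \cite[Theorems~2.1.46--48]{Macias}, so there is nothing to compare against beyond noting that your argument is the standard one. One small remark: in the surjectivity step you use that $\pi_n$ is surjective to get $L_n\ne\emptyset$; this is legitimate here because of the paper's standing assumption (stated just before the theorem) that all bonding maps $f_n$ are surjective, but it is worth making that dependence explicit.
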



\section{Properties of length-expanding Lipschitz maps}\label{S:tentLike}

Here we briefly state basic properties of the class of LEL maps.
We start with the proof of Proposition~\ref{P:tentLikeIsExact} stated in the introduction.

\renewcommand{\themainresult}{\ref{P:tentLikeIsExact}}
\begin{propositionA}
\propExact{}
\end{propositionA}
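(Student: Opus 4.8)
The plan is to first establish exactness directly from the length-expanding property, then derive finiteness and positivity of entropy, and finally handle the specification/Devaney-chaos claim in the factorized case.

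\textbf{Exactness.} Fix a nonempty open set $U\subseteq X$. By density of $\CCc$ there is some $C_0\in\CCc$ with $C_0\subseteq U$, and since members of $\CCc$ are non-degenerate subcontinua, $\lengthd{d}{C_0}\ge d(C_0)>0$ by \eqref{EQ:length-diam}. The key observation is that, as long as $f^n(C_0)\ne X$, we have $f^n(C_0)\in\CCc$ (by the defining property of LEL maps, applied inductively, since $f$ maps $\CCc$ into $\CCc$) and $\lengthd{d}{f^n(C_0)}\ge\varrho^n\lengthd{d}{C_0}$. Because $\varrho>1$ and $\lengthd{d}{X}<\infty$, there must be a smallest $N$ with $\varrho^N\lengthd{d}{C_0}>\lengthd{d}{X}$, and hence $f^n(C_0)=X$ for some $n\le N$; otherwise $f^N(C_0)\in\CCc$ would be a subcontinuum of $X$ of length exceeding $\lengthd{d}{X}$, which is impossible by monotonicity of $\HHh^1_d$. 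Since $f$ is surjective, $f^m(U)\supseteq f^{m-n}(f^n(C_0))=f^{m-n}(X)=X$ for all $m\ge n$, so $f$ is exact. Exactness immediately gives transitivity; combined with the classical fact that an exact (hence transitive) map with a dense orbit on an infinite space has dense periodic points once specification holds — but more elementarily, exact interval-factor maps below will supply periodic points — we note for now that exactness of $f$ already yields density of periodic points when $f=\varphi\circ\psi$ via the specification argument.

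\textbf{Entropy.} For finiteness: $f$ is Lipschitz-$L$ and $X$ is a compact metric space of finite length, hence of (upper box / topological) dimension $\le 1$; a Lipschitz map on a compact metric space of finite Hausdorff/box dimension has finite topological entropy (standard estimate $h(f)\le\max\{0,\overline{\dim}_B X\cdot\log L\}$). So $h(f)<\infty$. For positivity: exactness of a non-degenerate system forces positive entropy — more concretely, pick $C_0\in\CCc$ with $\lengthd{d}{C_0}$ small; by the length-expansion argument one finds, inside any small subcontinuum, two non-overlapping members of $\CCc$ each of which eventually covers $X$, yielding a ``horseshoe''-type combinatorial structure (two disjoint subcontinua each mapping onto a set containing both) after finitely many iterates, whence $h(f)>0$. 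Alternatively, invoke that a positively expansive / exact map on a non-degenerate continuum has $h(f)>0$; I would cite or reprove the horseshoe bound.

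\textbf{Specification and Devaney chaos.} Now assume $f=\varphi\circ\psi$ with $\psi:X\to I$, $\varphi:I\to X$. Then $g:=\psi\circ\varphi:I\to I$ is a continuous interval map, and $f^{n+1}=\varphi\circ g^n\circ\psi$. I claim $g$ is exact on $I$: given a subinterval $J$, $\varphi(J)$ contains a member of $\CCc$ (or at least has positive length), and running the length-expansion argument for $f$ through the semiconjugacy shows $g^n(J)=I$ for large $n$; more carefully, since $f$ is exact and $\varphi,\psi$ are continuous surjections, $g$ is exact. An exact interval map has the specification property (this is classical — e.g.\ by Blokh's theorem, transitive interval maps have specification, and exact maps are transitive). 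Specification passes to factors and, more to the point here, lifts along the particular semiconjugacy: given $\eps>0$, use uniform continuity of $\varphi$ to pick $\delta$, apply specification for $g$ with $\delta$ to get $m$; given points $x_i\in X$ and a prescribed itinerary, apply $\psi$ to get points $\psi(x_i)\in I$, produce a periodic $t\in I$ with $g^p(t)=t$ shadowing the $g^n(\psi(x_i))$ within $\delta$, and set $x=\varphi(t)$; then $f^{n+1}(x)=\varphi(g^n(t))$ is within $\eps$ of $\varphi(g^n(\psi(x_i)))=f^{n+1}(x_i)$, and $f^{p}(\varphi(t))=\varphi(g^p(t))=\varphi(t)=x$ gives periodicity (after absorbing the index shift into the definition of $m$). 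Thus $f$ has specification. Finally, a system with specification on an infinite space has dense periodic points and is transitive — and here $f$ is even exact — so $f$ is exactly Devaney chaotic. The main obstacle is the bookkeeping in transferring specification across the non-injective semiconjugacy $f^{n+1}=\varphi\circ g^n\circ\psi$ (matching periods and indices precisely), rather than any conceptual difficulty; the exactness step is the clean core of the argument.
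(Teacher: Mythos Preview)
Your exactness argument is essentially identical to the paper's: pick $C_0\in\CCc$ inside $U$, iterate, and use $\varrho^n\lengthd{d}{C_0}\to\infty$ against $\lengthd{d}{X}<\infty$.

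For specification, both you and the paper reduce to Blokh's theorem applied to the exact interval map $g=\psi\circ\varphi$, but the transfer back to $f$ is handled differently. You attempt an explicit shadowing computation through the relation $f^{n+1}=\varphi\circ g^n\circ\psi$, which forces you to wrestle with the index shift and the non-injectivity of $\psi$. The paper avoids all of this by observing the clean semiconjugacy $\varphi\circ g = f\circ\varphi$, so that $f$ is a \emph{factor} of $g$ via the surjection $\varphi$, and then simply citing that specification passes to factors (\cite[21.4]{DGS}) and that specification implies dense periodicity (\cite[21.3]{DGS}). Your approach is in effect reproving the factor lemma in this one instance; the paper's route is shorter and sidesteps the bookkeeping you flagged as the ``main obstacle.''

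On entropy: the paper's proof actually does not spell out the finite-positive-entropy claim at all, so your treatment is more complete. Finiteness via the Lipschitz/box-dimension bound is fine. For positivity, your horseshoe sketch is the right idea but is not quite an argument as written; to make it rigorous you should use Lemma~\ref{L:CX}(2) to cover $X$ by $C_1,\dots,C_k\in\CCc$ of small length, note that some iterate $f^N$ maps each $C_i$ onto $X\supseteq C_1\cup\dots\cup C_k$, and extract a $k$-shift from the resulting combinatorics. Alternatively, in the factorized case, positivity follows directly from specification on an infinite space.
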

\begin{proof}
 Let $f:(X,d,\CCc)\to (X,d,\CCc)$ be a $(\varrho,L)$-LEL map.
 Take any nonempty open subset $U$ of $X$ and fix some
 $C\in\CCc$ contained in $U$.
 Then $f^n(C)\in\CCc$ for every $n$.
 If $f^n(C)\ne X$ for every $n$ then $\lengthd{d}{f^n(C)}\ge \varrho^n\cdot \lengthd{d}{C} \to \infty$ for $n\to\infty$, which contradicts the fact that $X=(X,d)$ has finite length.
 So $f^n(U)\supseteq f^n(C)=X$ for some $n$, which proves the exactness of $f$.

 Now assume that $f=\varphi\circ\psi$.
 Since $f$ is exact, also the factor $f'=\psi\circ\varphi:I\to I$
 of $f$ is exact. Hence $f'$ has the specification property by \cite{Blo83}.
 By \cite[21.4]{DGS} also $f$, being a factor of $f'$, has the specification property.
 Finally, by \cite[21.3]{DGS}, $f$ has dense periodic points.
\end{proof}

Recall that $d_I$ denotes the Euclidean metric
on $I$ and $\CCc_I$ is the system of all non-degenerate
closed subintervals of $I$.
Note that the following lemma can be substantially generalized, but for our purposes
this version is sufficient.

\begin{lemma}\label{L:tentLike_fk}
Let $k\ge 3$ and $f_k:I\to I$ be the piecewise linear map fixing $0$ and
mapping every $[(i-1)/k,i/k]$ onto $I$. Then $f_k:(I,d_I,\CCc_I)\to (I,d_I,\CCc_I)$ is $(k/2,k)$-LEL.
\end{lemma}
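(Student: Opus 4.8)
The plan is to check directly the three ingredients in the definition of a $(\varrho,L)$-LEL map with $\varrho=k/2$ and $L=k$: surjectivity, Lipschitzness, and the length-expansion inequality \eqref{EQ:defLengthExpanding} with respect to $\CCc_I,\CCc_I$. Surjectivity is immediate, since $f_k$ maps the first linearity interval $[0,1/k]$ affinely onto $I$. Lipschitzness with constant $k$ is also immediate: $f_k$ is continuous and affine with slope $\pm k$ on each of the $k$ pieces $P_i:=[(i-1)/k,i/k]$, so $\Lip(f_k)=k$. Note that $\varrho=k/2>1$ holds precisely because $k\ge 3$. For the requirement $f_k(\CCc_I)\subseteq\CCc_I$: given a non-degenerate closed subinterval $J$, the set $f_k(J)$ is the continuous image of a compact connected set, hence a closed subinterval of $I$, and it is non-degenerate because $J$ contains a non-degenerate subinterval of some $P_i$, on which $f_k$ is affine and non-constant; so $f_k(J)\in\CCc_I$.

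The core of the argument is the inequality \eqref{EQ:defLengthExpanding}. Fix $J=[a,b]$ with $a<b$ and assume $f_k(J)\ne I$. The key observation is that this hypothesis forces $J$ to be ``small'': if $J$ contained some $P_i$ entirely we would get $f_k(J)\supseteq f_k(P_i)=I$; and since $J$ is an interval, if it met $P_i$ and $P_{i+2}$ then it would contain $P_{i+1}$, again giving $f_k(J)=I$. Hence $J$ meets at most two pieces, which must be consecutive, and contains neither of them entirely. If $J\subseteq P_i$ for a single $i$, then $f_k|_J$ is affine of slope $\pm k$, so $\lengthd{d_I}{f_k(J)}=k\abs{J}\ge (k/2)\abs{J}$. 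If $J$ meets two consecutive pieces $P_i,P_{i+1}$, write $c=i/k$, so that $a<c<b$, $[a,c]\subseteq P_i$ and $[c,b]\subseteq P_{i+1}$. Here I will use that, because $f_k$ maps each $P_j$ affinely onto $I$ and fixes $0$, every interior node $j/k$ satisfies $f_k(j/k)\in\{0,1\}$; being a global extremum of $f_k$, this value cannot be passed monotonically, so the slopes of $f_k$ on $P_i$ and $P_{i+1}$ are opposite. Consequently $f_k([a,c])$ and $f_k([c,b])$ are intervals of lengths $k(c-a)$ and $k(b-c)$ that share the endpoint $f_k(c)\in\{0,1\}$ and lie on the same side of it; their union is therefore an interval of length $k\max\{c-a,\,b-c\}$. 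Since $\max\{c-a,\,b-c\}\ge \tfrac12\big((c-a)+(b-c)\big)=\tfrac12\abs{J}$, we conclude $\lengthd{d_I}{f_k(J)}\ge (k/2)\abs{J}$, as needed.

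The only mildly delicate point — and thus the ``main obstacle'', such as it is — is the bookkeeping of boundary cases in the reduction: making sure that a $J$ which merely touches a node $i/k$ is correctly placed in the single-piece case, and that the situation ``$J$ contains a full $P_i$'' is exactly the one producing the excluded outcome $f_k(J)=I$. Everything else reduces to one-line affine computations. Combining the easy parts of the first paragraph with the inequality just proved shows that $f_k$ is $(\CCc_I,\CCc_I,k/2,k)$-LEL, i.e.\ $(k/2,k)$-LEL.
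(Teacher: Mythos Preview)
Your proof is correct and follows essentially the same approach as the paper's: reduce to the case where $J$ is contained in at most two consecutive linearity pieces (since containing a full piece forces $f_k(J)=I$), then use $\abs{f_k(J)}\ge k\cdot\max\{\abs{J_0},\abs{J_1}\}\ge (k/2)\abs{J}$. You supply more detail than the paper does---explicitly checking surjectivity, Lipschitzness, $f_k(\CCc_I)\subseteq\CCc_I$, and the fact that consecutive slopes are opposite so the image length is exactly $k\cdot\max$---but the argument is the same.
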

\begin{proof}
Only length-expansiveness needs a proof. Take any non-degenerate closed subinterval $J$ of $I$.
If there is $i$ such that $J\supseteq [(i-1)/k,i/k]$ then $f(J)=I$. Otherwise
there is $i$ such that $J=J_0\cup J_1$ where $J_0\subseteq ((i-1)/k,i/k]$ and
$J_1\subseteq [i/k,(i+1)/k)$. Then $\abs{f_k(J)} \ge k\cdot \max\{\abs{J_0},\abs{J_1}\}
\ge (k/2)\cdot \abs{J}$.
\end{proof}

\begin{lemma}\label{L:CCc'}
Let $f:(X,d,\CCc)\to (X',d',\CCc')$ be a $(\varrho,L)$-LEL map.
Then for every $\DDd'\supseteq \CCc'$, $1<\varrho'\le\varrho$ and
$L'\ge L$, the map
$f:(X,d,\CCc)\to (X',d',\DDd')$ is $(\varrho',L')$-LEL.
\end{lemma}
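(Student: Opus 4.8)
The plan is to unwind the definition of a $(\varrho,L)$-LEL map and observe that each of its defining requirements is preserved under enlarging the target system of subcontinua, lowering the length-expansivity constant, and raising the Lipschitz constant; so the lemma is a pure monotonicity statement in the three parameters $\CCc'$, $\varrho$, $L$.

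First I would record that, by hypothesis, $f:(X,d,\CCc)\to (X',d',\CCc')$ is surjective and Lipschitz-$L$; since $L'\ge L$, the same map is Lipschitz-$L'$, and of course it is still surjective. It therefore remains only to verify that $f$ is length-expanding with respect to $\CCc$ and $\DDd'$ with length-expansivity constant $\varrho'$. Fix $C\in\CCc$. By the length-expansiveness of the original map, $f(C)\in\CCc'$, and hence $f(C)\in\DDd'$ because $\CCc'\subseteq\DDd'$. If moreover $f(C)\ne X'$, then (\ref{EQ:defLengthExpanding}) applied to the original map yields $\lengthd{d'}{f(C)}\ge\varrho\cdot\lengthd{d}{C}$, and since $\varrho\ge\varrho'$ and $\lengthd{d}{C}\ge 0$ we conclude $\lengthd{d'}{f(C)}\ge\varrho'\cdot\lengthd{d}{C}$. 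Thus (\ref{EQ:defLengthExpanding}) holds with $\varrho'$ in place of $\varrho$, so $f:(X,d,\CCc)\to (X',d',\DDd')$ is $(\varrho',L')$-LEL.

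There is no genuine obstacle here; the only point requiring the slightest attention is that the new constant $\varrho'$ is still admissible as a length-expansivity constant, i.e.\ strictly larger than $1$, which is guaranteed precisely by the assumption $1<\varrho'\le\varrho$.
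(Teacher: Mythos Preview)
Your proof is correct; indeed, the paper itself omits any proof of this lemma, treating it as an immediate consequence of the definition, and your verification is exactly the routine check one would make.
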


\begin{lemma}\label{L:tentLikeComposition}
Let $f:(X,d,\CCc)\to (X',d',\CCc')$ be $(\varrho,L)$-LEL
and $f':(X',d',\CCc')\to (X'',d'',\CCc'')$ be $(\varrho',L')$-LEL.
Then $f'\circ f:(X,d,\CCc)\to (X'',d'',\CCc'')$ is $(\varrho\varrho',LL')$-LEL.
\end{lemma}
\begin{proof}
 Put $g=f'\circ f$. Immediately $\Lip(g)\le LL'$. Take any $C\in\CCc$
 and put $C'=f(C)\in\CCc'$. If $C'=X'$ then, by surjectivity of $f'$, $g(C)=X''$.
 Otherwise $\lengthd{d'}{C'}\ge\varrho\lengthd{d}{C}$ and, if $f'(C')\ne X''$,
 also $\lengthd{d''}{f'(C')}\ge\varrho'\lengthd{d'}{C'}$; hence
 $\lengthd{d''}{g(C)}\ge\varrho\varrho'\lengthd{d}{C}$.
\end{proof}


\section{Lipschitz-$1$ surjections $g:[0,\alpha]\to X$, $h:X\to [0,\beta]$}\label{S:proofOfMainThms}

In this section we show that for a totally regular continuum $X$
there are a compatible convex metric $d$ and two
Lipschitz surjections $g:[0,\alpha]\to (X,d)$, $h:(X,d)\to [0,\beta]$ such that
$$
 \gamma\cdot \abs{J} \le \lengthd{d}{g(J)}\le \Gamma\cdot\abs{h\circ g(J)}
$$
for every closed subinterval $J$ of $[0,\alpha]$, where
$0<\gamma<\Gamma$ are constants not depending on $J$ (see Lemma~\ref{P:props-g-h}).

We start with a simple property of convex metrics on locally connected continua.
For a metric space $X=(X,d)$ and a point $a\in X$ put
\begin{equation}\label{EQ:def-ha}
 h_a:X\to \RRR,
 \qquad
 h_a(x)=d(a,x)
 \quad\text{for } x\in X.
\end{equation}

\begin{lemma}\label{L:freeArc}
Let $X=(X,d)$ be a locally connected continuum endowed with a convex metric $d$
and let $a\in X$. Then
$$
 \abs{h_a(A)}\ge \frac{1}{2} \cdot \lengthd{d}{A}
$$
for  any free arc $A$ in $X$.
\end{lemma}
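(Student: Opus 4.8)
Let $A$ be a free arc in $X$ with endpoints $p,q$; recall a free arc is one whose interior consists of points of order $2$ having a neighborhood base of arcs contained in $A$. Since $d$ is convex and $X$ is a locally connected continuum of finite length (note: if $A$ has infinite length there is nothing to prove, so assume $\length{A}<\infty$), $X$ has the geodesic property. The plan is to show that $h_a$ restricted to $A$ is ``eventually monotone'' in a suitable sense: the function $t\mapsto h_a(\kappa(t))$ along an arc-length parametrization $\kappa$ of $A$ cannot oscillate too much, because any two points $x,y\in A$ with a geodesic between them passing outside a subarc force that subarc to be short relative to the variation of $h_a$. More precisely, I would prove that the total variation of $h_a\circ\kappa$ is at least $\length{A}/2$ wherever it matters; then $\abs{h_a(A)}$, being the diameter of the image interval $h_a(A)\subseteq\RRR$, dominates half the length.

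**Key step: a geodesic must ``run along'' the free arc.** The crucial observation is: if $x,y\in A$ are distinct, and $G$ is a geodesic in $X$ from $x$ to $y$, then either $G\subseteq A$, or $G$ meets $X\setminus A$. In the first case $d(x,y)=\length{G}=\length{A_{xy}}$ where $A_{xy}$ is the subarc of $A$ between $x$ and $y$. In the second case, since $A$ is free, the only way to leave $A$ and come back is through an endpoint $p$ or $q$ of $A$; so $G$ passes through $p$ (or $q$), giving $d(x,y)=d(x,p)+d(p,y)$ and, because $d(x,p)\ge$ (by \eqref{EQ:length-diam} applied to the subarc, or just the geodesic property along $A$... ) one gets $d(x,p)=\length{A_{xp}}$ and similarly for $y$. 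I would first establish this dichotomy carefully: a subarc of a free arc with both endpoints in the arc's interior is itself a free arc, and any arc in $X$ joining two of its interior points and leaving it must exit through $p$ or $q$.

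**From the dichotomy to the bound.** Now parametrize $A$ by arc length, $\kappa:[0,\ell]\to A$ with $\ell=\length{A}$, $\kappa(0)=p$, $\kappa(\ell)=q$, so $\length{\kappa([s,t])}=t-s$. Set $\phi(t)=h_a(\kappa(t))=d(a,\kappa(t))$. For any $0\le s<t\le\ell$, applying the dichotomy to $x=\kappa(s),y=\kappa(t)$: either $d(x,y)=t-s$, or $d(a,x)+d(a,y)\ge d(x,y)$ is realized through an endpoint and one checks $\phi(s)+\phi(t)\ge$ something forcing $(\ell-t)+s \le \phi(s)+\phi(t)$ or similar. The cleanest route: I claim $\phi$ is ``$1$-Lipschitz and has the property that on no subinterval is it far from affine with slope $\pm1$''. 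Concretely, from the dichotomy one derives that for the midpoint-type comparison, $|\phi(t)-\phi(s)|=t-s$ OR $\phi(s)+\phi(t)\ge (t-s)$ combined with $\phi$ being $1$-Lipschitz; in either sub-case a short computation gives $\max_{[0,\ell]}\phi-\min_{[0,\ell]}\phi\ge \ell/2$. The extremal situation is when $a$ sits ``in the middle'' of $A$ so that $\phi$ decreases to $0$ and increases again, each branch of slope $1$, over a total length $\ell$, giving image diameter exactly $\ell/2$ — matching the constant $1/2$.

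**Main obstacle.** The delicate point is the dichotomy lemma — proving that a geodesic between two interior points of a free arc either stays in the arc or exits only through an endpoint. This uses freeness essentially (interior points of $A$ have order $2$ with arc neighborhoods inside $A$), plus the fact that geodesics are arcs; one must rule out a geodesic that ``touches'' $A$ tangentially or re-enters, which follows because an arc in a continuum meeting a free open arc and its complement must cross the (two-point) boundary of the free arc. I would also need to handle endpoints of $A$ that happen to be branch points of $X$, but since we only use interior points of $A$ in the variation estimate, taking a supremum over subarcs $[s,t]\subset(0,\ell)$ and passing to the limit $s\to0$, $t\to\ell$ by continuity of $\phi$ suffices, so this is a minor technical closure rather than a real obstacle.
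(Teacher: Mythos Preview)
Your proposal has the right overall intuition—the freeness of $A$ constrains how geodesics can interact with $A$—but the dichotomy you set up concerns the wrong geodesics, and the bridge from it to the bound on $|h_a(A)|$ is the real gap. You prove a dichotomy for a geodesic between two points $x=\kappa(s)$ and $y=\kappa(t)$ of $A$: it either equals the subarc $\kappa([s,t])$ or passes through an endpoint of $A$. From this you assert the alternative ``$|\phi(t)-\phi(s)|=t-s$ OR $\phi(s)+\phi(t)\ge t-s$'', but neither branch follows from your dichotomy. The first branch would require the geodesic from $a$ to one of $x,y$ to pass through the other, which is unrelated to the $x$--$y$ geodesic. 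The second branch is just the triangle inequality $d(a,x)+d(a,y)\ge d(x,y)$ together with $d(x,y)=t-s$; hence it holds precisely in your \emph{first} case, not the second. Worse, even if both alternatives were available for all $s<t$, they would not force $\max\phi-\min\phi\ge\ell/2$: the constant function $\phi\equiv\ell/2$ is $1$-Lipschitz and satisfies $\phi(s)+\phi(t)=\ell\ge t-s$ for all $s,t$, yet its oscillation is $0$. So the step you label ``a short computation'' is exactly where the argument fails, not the dichotomy lemma you single out as the main obstacle.

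The paper's proof applies the freeness argument to the right family of geodesics, namely those from $a$ to a point $y_t=\kappa(t)$ of $A$. When $a$ is not an interior point of $A$, any such geodesic must contain one of the endpoints $y,z$ of $A$, and this (together with the fact that the portion inside $A$ is then the subarc from that endpoint to $y_t$) yields the explicit formula
\[
 \phi(t)=h_a(y_t)=\min\{d(a,y)+t,\ d(a,z)+(\alpha-t)\}.
\]
From this piecewise-affine ``tent'' shape the inequality $|h_a(A)|\ge\alpha/2$ is immediate. A separate short calculation handles the case $a\in\operatorname{int}(A)$. The fix to your argument is simply to redirect the freeness/dichotomy reasoning to the geodesics $A_t$ from $a$ to $\kappa(t)$; once you do, the computation you allude to becomes that one-line observation.
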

\begin{proof}
Let $y,z$ be the end points of $A$. For distinct $u,v\in A$ we will denote by $uv$ the subarc of $A$ with end points $u,v$. Let $\alpha$ be the length of $A$ and let
$\kappa:[0,\alpha]\to A$ be the natural parametrization of $A$ such that $\kappa(0)=y$ and $\kappa(\alpha)=z$.
Put $y_t=\kappa(t)$ for $t\in[0,\alpha]$; hence $\lengthd{d}{y_t y_s}=\abs{s-t}$ for every
different $t,s\in[0,\alpha]$.

For every $t\in [0,\alpha]$ such that $y_t\ne a$ take a geodesic arc $A_t$ from $a$ to $y_t$.
Assume first that $a$ is not an interior point of $A$.
Since $A$ is a free arc, every arc (hence also every $A_t$) from $a$ to a point of $A$ must contain $y$ or $z$.
Take any $t\in [0,\alpha]$.
If $y\in A_t$
then $d(a,y_t)=d(a,y)+d(y,y_t)=d(a,y)+t$ since $A_t$ is geodesic.
Analogously, if $z\in A_t$ then
$d(a,y_t)=d(a,z)+d(z,y_t)=d(a,z)+(\alpha-t)$.
So
$$
 h_a(y_t)=\min\{ d(a,y)+t, d(a,z)+(\alpha-t) \}.
$$
Hence immediately $\abs{h_a(A)}\ge \alpha/2$.

Now assume that $a=y_s$ for some $s\in (0,\alpha)$;
without loss of generality we may assume that $\lengthd{d}{ay}\le \lengthd{d}{az}$. Then for every
$t\in[0,\alpha]$, $t\ne s$ the geodesic arc $A_t$ is either the subarc $ay_t$ of $A$
or an arc containing both $y$ and $z$. Hence
$$
 h_a(y_t)=\min\{
  \abs{t-s},
  s  + d(y,z) + (\alpha-t)
 \}.
$$
So also in this case we easily have $\abs{h_a(A)}\ge \alpha/2$.
\end{proof}

\subsection{Admissible maps on graphs}\label{SS:graph-case}

Let $G$ be a graph with a metric $d$ and let $a,b$ be
(not necessarily distinct) vertices of $G$.
We say that a path $\pi=a E_{j_1} a_1 \dots a_{k-1} E_{j_k} b$ in $G$
(from $a$ to $b$) is \emph{admissible} provided
every edge of $G$ is at least once but at most twice in $\pi$; moreover, if
$a_i$ is a vertex of $G$ of order $2$ then $E_{j_{i}}\ne E_{j_{i+1}}$
(i.e.~$\pi$ ``goes through'' the ordinary vertices of $G$).

A continuous map $\kappa$ from a compact interval $J=[\alpha,\beta]$ to $G$ is called
\emph{fully-admissible} or, more precisely, \emph{fully-admissible for $(G,d)$ from $a$ to $b$},
if it is the natural parametrization of some admissible path $\pi$ from $a$ to $b$.
I.e.~$\kappa(\alpha)=a$, $\kappa(\beta)=b$
and there is an admissible path $\pi_\kappa=a E_{j_1} a_1 \dots a_{k-1} E_{j_k} b$
and non-overlapping compact intervals $J_1\le J_2\le \dots \le J_k$
such that $J=J_1\cup \dots \cup J_k$ and
every restriction $\kappa|_{J_i}:J_i\to E_{j_i}$ is an isometry.

A map is called \emph{admissible} if it is a restriction of a
fully-admissible map onto a compact interval. Notice that any
admissible map is finite-to-one and outside of a finite set (the set of
points mapped to the vertices of $G$) is at most two-to-one.
Moreover, admissible maps are Lipschitz-$1$ provided the metric $d$
is convex. The following lemma can be easily proved by
induction on the number of edges of $G$.

\begin{lemma}\label{L:graph:admissible} Let $G$ be a graph and let $a,b$ be vertices of $G$.
Then there is a fully-admissible map $\kappa:[\alpha,\beta]\to G$
for $G$ from $a$ to $b$.
\end{lemma}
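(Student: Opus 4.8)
The plan is to prove Lemma~\ref{L:graph:admissible} by induction on the number $m$ of edges of $G$ (with respect to some fixed choice of edge/vertex structure making $G$ a graph in the sense of Section~\ref{SS:graphs}). The base case $m=1$ is trivial: $G$ is a single arc with end vertices, say $u,v$; if $\{a,b\}=\{u,v\}$ we parametrise $G$ directly, and if $a=b$ is one of these we traverse the arc to the other end and back, which uses the only edge exactly twice and is clearly fully-admissible. (If $G$ has extra vertices of order $2$ along this single arc one still just runs straight through them, so the order-$2$ condition is vacuous here.)

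For the inductive step, suppose the statement holds for all graphs with fewer than $m$ edges, and let $G$ have $m\ge 2$ edges and vertices $a,b$. The idea is to peel off one edge $E$, apply induction to the smaller graph, then splice $E$ back in. I would first try to choose an edge $E$ with an endpoint $c$ of order $1$ in $G$ (a ``pendant'' edge), with $c\notin\{a,b\}$ if possible; then $G'=\closure{G\setminus(E\setminus\{d\})}$ (where $d$ is the other endpoint of $E$) is a graph with $m-1$ edges, or a single point if $m=1$ — but $m\ge2$ guarantees $G'$ is a genuine graph unless $G$ itself was a single arc, already handled. Apply the inductive hypothesis to $G'$ with the vertices $a,b$ (note $d$ is a vertex of $G'$, as is $a$ and $b$) to get a fully-admissible path $\pi'$ in $G'$ from $a$ to $b$; since $\pi'$ visits every edge of $G'$ and passes through every order-$2$ vertex, in particular it contains $d$ at some internal (or terminal) position, and at such an occurrence we insert the detour $\dots d\,E\,c\,E\,d\dots$. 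This uses $E$ exactly twice, does not disturb the edge-counts of $G'$, and the inserted vertex $c$ has order $1$ so the order-$2$ condition is not triggered there; we must only check that at the splice point $d$ the two edges adjacent to our insertion in the new path are still distinct when $d$ has order $2$ in $G$ — but if $d$ had order $2$ in $G$ it has order $1$ in $G'$, so it is an endpoint of $\pi'$ and there is no constraint, while if $d$ has order $\ge3$ in $G$ the condition is vacuous.

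The main obstacle is that a graph need not have any pendant edge — e.g. if $G$ contains a cycle through all its edges. In that case I would instead delete an edge $E$ lying on a cycle, with endpoints $d_1,d_2$; then $G'=\closure{G\setminus(E\setminus\{d_1,d_2\})}$ is still connected (removing a cycle edge preserves connectivity) and has $m-1$ edges, so induction gives a fully-admissible path $\pi'$ in $G'$ from $a$ to $b$. Now $\pi'$ contains both $d_1$ and $d_2$; pick an occurrence of $d_1$ in $\pi'$ and insert the detour $\dots d_1\,E\,d_2\,P\,d_1\dots$, where $P$ is a path in $G'$ from $d_2$ back to $d_1$ (such $P$ exists by connectivity of $G'$). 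This traverses $E$ once; every edge of $P$ gets one extra traversal, so some edges of $G'$ are now used up to three times, which violates admissibility — so a naive detour is not enough, and this is precisely the delicate point. The fix is to be smarter about which edge to remove: one should remove an edge $E$ incident to a vertex $d_1$ of order $\ge 3$ so that after the induction the return trip can be folded into a single ``out-and-back'' along $E$ alone, i.e. one should delete a pendant edge after first reducing to a tree-plus-cycles structure; alternatively, choose $E$ so that $d_1=d_2$ is a loop — impossible here since loops aren't edges — so in practice one inducts by first removing all pendant edges (handled above) and then, when only cycles remain, using an Eulerian-type argument: a connected graph in which every vertex has even order has an Eulerian circuit, and in general one can pair up the odd-order vertices and duplicate a set of edges forming paths between the pairs so that the resulting multigraph is Eulerian; traversing that Eulerian circuit, suitably rotated to start and end appropriately and with a final out-and-back detour to reach $b$ from $a$ if needed, yields an admissible path. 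I would present the clean version: reduce to the case where $G$ has no order-$1$ vertex outside $\{a,b\}$ via the pendant-edge step, then invoke the standard fact that the odd-degree vertices of a finite connected graph can be joined in pairs by edge-disjoint paths, double those paths to obtain an Eulerian multigraph, and read off the required admissible path from an Eulerian circuit through it, checking the order-$2$ and at-most-twice conditions directly from the construction.
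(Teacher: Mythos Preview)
Your overall plan --- induction on the number of edges --- is exactly what the paper indicates (that one sentence is the paper's entire proof). The base case and the pendant-edge step are essentially correct; the claim that ``$d$ is an endpoint of $\pi'$'' when $\ord_{G'}(d)=1$ is wrong (it need not equal $a$ or $b$), but the splice $\dots F\,d\,E\,c\,E\,d\,F\dots$ still satisfies the order-$2$ condition at $d$ since $F\ne E$, so the conclusion survives.

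The real gap is in the non-pendant case. Your Eulerian sketch asserts that the order-$2$ condition can be ``checked directly from the construction'', but this is precisely where it fails without further work: if $v$ has order $2$ in $G$ and both edges at $v$ happen to get doubled, an Eulerian circuit in the doubled multigraph may traverse $E_1\,v\,E_1$ at one visit and $E_2\,v\,E_2$ at the other, which is not admissible. One can repair this by local swaps, but you would have to actually carry them out, and you still need to convert the closed circuit into an $a$--$b$ walk without breaking anything.

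A much lighter inductive step avoids Euler entirely. Suppose $G$ has no vertex of order~$1$. If $G$ has a bridge $E$ with endpoints $c,d$, both components $G_1,G_2$ of $G\setminus\interior{E}$ are genuine graphs (each of $c,d$ has order $\ge 2$ in $G$); recurse on each with appropriate endpoints and concatenate through $E$, once or twice according as $a,b$ lie in different or in the same component. If $G$ has no bridge, then either $G$ is a single cycle --- handled directly by walking once around and then along one arc to $b$ --- or some vertex $c$ has order $\ge 3$. Pick any edge $E$ at $c$; since $E$ is not a bridge, $G'=\closure{G\setminus\interior{E}}$ is connected, induction gives an admissible $\pi'$ from $a$ to $b$ in $G'$, and at an occurrence of the \emph{other} endpoint $d$ you insert $d\,E\,c\,E\,d$. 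The only new interior vertex is $c$, where $\ord_G(c)\ge 3$ makes the order-$2$ condition vacuous; at the two copies of $d$ the adjacent edges are an edge of $G'$ and $E$, automatically distinct.
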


\begin{lemma}\label{L:graph:admissible2}
Let $0<q<1$ and let $G$ be a graph.
Then there is a convex metric $d$ on $G$ such that
for every admissible map $\kappa:J\to G$ and every vertex $a$ of $G$ it holds that
$$
 \lengthd{d}{\kappa(J)}\ge \frac{1}{2}\cdot \abs{J}
 \qquad\text{and}\qquad
 \abs{h_a\circ \kappa(J)}\ge \frac{1-q}{6}\cdot \lengthd{d}{\kappa(J)}.
$$
Moreover, $\abs{h_a(G)}\ge \dfrac{1-q}{2}\cdot \lengthd{d}{G}$.
\end{lemma}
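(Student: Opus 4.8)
The statement has three assertions about a single convex metric $d$ on a graph $G$: (i) $\lengthd{d}{\kappa(J)}\ge \tfrac12\abs{J}$ for every admissible $\kappa$; (ii) $\abs{h_a\circ\kappa(J)}\ge\tfrac{1-q}{6}\lengthd{d}{\kappa(J)}$ for every admissible $\kappa$ and vertex $a$; (iii) $\abs{h_a(G)}\ge\tfrac{1-q}{2}\lengthd{d}{G}$. The plan is to build $d$ so that $G$ becomes, metrically, very close to a single long free arc (a ``caterpillar'' whose body carries almost all the length), and then deduce (ii) and (iii) from Lemma~\ref{L:freeArc} applied to that body, with the parameter $q$ controlling the ``error'' coming from the short side-edges. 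For (i), the point is simply that an admissible path, being finite-to-one and at most two-to-one off a finite set, traces $\kappa(J)$ at most twice, so $\abs{J}\le 2\lengthd{d}{\kappa(J)}$; this holds for \emph{any} convex metric and is independent of the construction.

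\textbf{Construction of $d$.} First fix, by Lemma~\ref{L:graph:admissible}, a fully-admissible map $\kappa_0:[\alpha_0,\beta_0]\to G$ from $a_0$ to $b_0$ for some chosen vertices; its image exhausts $G$ and each edge is traversed once or twice. I would choose a spanning tree-like ``trunk'' $T$ in $G$ — concretely, look at the underlying admissible path and designate a maximal subpath that meets every edge of $G$ at least once — and then assign lengths to the edges of $G$: give the edges of the trunk total length close to $1$, say $\ge 1-q/2$, and give the remaining edges (there are finitely many) total length at most $q/2$, chosen small enough. Formally one defines $d$ to be the induced path (geodesic) metric of this edge-weighted graph; by \cite[Theorem~8]{BingPartSet} (or directly, since a weighted graph's path metric is convex) this $d$ is convex and compatible, and $\lengthd{d}{G}$ equals the total assigned edge length, which I will normalize appropriately. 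The key quantitative features to arrange are: $\lengthd{d}{G}=:\ell$ with the trunk carrying at least $(1-q)\ell$ of it — actually I should be a little more careful and make \emph{every} individual non-trunk edge short relative to $\ell$, and moreover make the trunk a genuine free arc in $G$ by, if necessary, not including the two outermost trunk edges' further ramifications; the factor $6$ versus the factor $2$ in Lemma~\ref{L:freeArc} is exactly the slack I expect to spend on bounding the contribution of the (few, short) side-edges and of the at-most-double traversal.

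\textbf{Deriving (ii) and (iii).} For (iii): let $A\subseteq G$ be the trunk, a free arc with $\lengthd{d}{A}\ge(1-q)\lengthd{d}{G}$. By Lemma~\ref{L:freeArc}, $\abs{h_a(A)}\ge\tfrac12\lengthd{d}{A}\ge\tfrac{1-q}{2}\lengthd{d}{G}$, and since $h_a(G)\supseteq h_a(A)$ this gives (iii) directly; in fact it shows the trunk already witnesses the bound, so no separate argument is needed. For (ii): given an admissible $\kappa:J\to G$ with image $C=\kappa(J)$, I want $\abs{h_a(C)}\ge\tfrac{1-q}{6}\lengthd{d}{C}$. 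The subarc $C\cap A$ of the free arc $A$ is again a free arc (or empty or degenerate), and Lemma~\ref{L:freeArc}'s proof localizes: $\abs{h_a(C\cap A)}\ge\tfrac12\lengthd{d}{C\cap A}$. So it remains to compare $\lengthd{d}{C\cap A}$ with $\lengthd{d}{C}$. Here I distinguish two regimes. If $C$ is ``large'', i.e. $\lengthd{d}{C}\ge$ (some threshold like $3/(1-q)$ times the max side-edge length, which I will have arranged to be a small fraction of $\lengthd{d}{C}$ whenever $C$ is long), then $C$ must contain a definite portion of the trunk and $\lengthd{d}{C\cap A}\ge\tfrac{1-q}{3}\lengthd{d}{C}$, giving the claim with the factor $6=2\cdot3$. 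If $C$ is ``small'', then $C$ has small diameter, and a small-diameter admissible image lies within a bounded number of edges; I then argue directly that $h_a$ restricted to such a piece still has image of length $\ge\tfrac{1-q}{6}\lengthd{d}{C}$ — either $a\notin C$ and $h_a$ on $C$ behaves like a distance-to-nearest-entry-point function that is expanding on the dominant edge, or $a\in C$ and one argues as in the second case of Lemma~\ref{L:freeArc}'s proof. The bookkeeping that the edge-lengths chosen at the start make ``small $C$'' genuinely confined to one trunk edge plus short stubs is where the number $q$ and the constant $6$ get pinned down.

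\textbf{Main obstacle.} The genuinely delicate point is assertion (ii) for subcontinua $C$ that are \emph{not} themselves free arcs and that straddle a branch vertex where several short side-edges attach: there $h_a$ can be simultaneously small on the whole local picture (if $a$ is ``behind'' the branch vertex, all of $C$ is at roughly the same distance from $a$), yet $\lengthd{d}{C}$ picks up contributions from all the stubs. The resolution is the quantitative control built into $d$ — total stub length $\le q/2\cdot\lengthd{d}{G}$, and each stub short compared with $\lengthd{d}{C}$ once $C$ is long — so that the ``bad'' part of $C$ is a $q$-fraction and the trunk part dominates; pushing the thresholds so that the clean constant $\tfrac{1-q}{6}$ comes out (rather than something $q$-dependent in a messier way) is the one place where the estimates have to be done with some care, and it is essentially a finite case check over ``how many trunk edges does $C$ meet'' combined with Lemma~\ref{L:freeArc} applied to $C\cap A$.
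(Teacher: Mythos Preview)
Your argument for (i) is fine and matches the paper: admissibility bounds the multiplicity of $\kappa$ by two off a finite set, so $\abs{J}\le 2\lengthd{d}{\kappa(J)}$.

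For (ii), however, the ``trunk plus short stubs'' construction has a genuine gap. Consider $G$ an $m$-star with center $v$ and edges $e_1,\dots,e_m$. Your trunk $A$ is at best a two-edge path, say $e_1\cup e_2$; put the remaining edges $e_3,\dots,e_m$ at equal length $\eps$. A fully-admissible path can traverse $e_3,\dots,e_m$ consecutively (out and back from $v$), so for a suitable $J$ we get $C=\kappa(J)=e_3\cup\dots\cup e_m$, with $\lengthd{d}{C}=(m-2)\eps$ and $C\cap A=\{v\}$. For any vertex $a$, the image $h_a(C)$ has length at most $\eps$ (each $e_i$ gives $\abs{h_a(e_i)}\le\eps$, and they all share the value $h_a(v)$), hence
\[
 \frac{\abs{h_a(C)}}{\lengthd{d}{C}}\le\frac{1}{m-2},
\]
which is below $\frac{1-q}{6}$ once $m>2+\frac{6}{1-q}$. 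Your ``large $C$'' case fails here because $C$ need not meet the trunk in more than a point; your ``small $C$'' case fails because the claim that a small-diameter admissible image lies in a bounded number of edges is simply false (the star has diameter $2\eps$ but $m-2$ edges).

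The paper's remedy is different in kind: instead of one long trunk, it orders \emph{all} edges $E_0,\dots,E_k$ with geometrically decaying lengths $\lengthd{d}{E_i}\le q\cdot\lengthd{d}{E_{i-1}}$. Then for any $Y=\kappa(J)$ there are at most two partially covered edges $E_{j_1},E_{j_2}$, and if $E_{j_0}$ is the longest fully contained edge, the tail bound gives $\lengthd{d}{Y}\le \lengthd{d}{E_{j_1}\cap Y}+\lengthd{d}{E_{j_2}\cap Y}+\lengthd{d}{E_{j_0}}/(1-q)$. Since $\abs{h_a(Y)}$ dominates one half of the maximum of these three quantities (Lemma~\ref{L:freeArc}), the factor $\frac{1-q}{6}$ drops out. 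The point is that geometric decay makes the \emph{longest edge meeting $C$} dominate $\lengthd{d}{C}$, which is exactly what your equal-length stubs cannot do.
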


\begin{proof}
Fix any $0<q<1$.
Let $G$ be a graph and let $E_0,\dots,E_k$ be the edges of $G$.
Take a convex metric $d$ on $G$ such that $\lengthd{d}{G}<\infty$ and
\begin{equation}\label{EQ:P:propertyX:graphs:1}
 \lengthd{d}{E_i}\le q\cdot \lengthd{d}{E_{i-1}}
 \qquad
 \text{for every } i\ge 1.
\end{equation}
Such a metric can be constructed as follows: We may assume that $G$
is a subset of $\RRR^3$ endowed with the Euclidean metric and that
the (Euclidean) lengths of edges of $G$ are finite and exponentially
decreasing with quotient $q$. Then it suffices to take the convex
metric on $G$ generated by the Euclidean one.

Let $a$ be a vertex of $G$ and let $\kappa:J\to G$ be an admissible
map for $(G,d)$; put $Y=\kappa(J)$. Let $\pi=a_0 E_{i_1} a_1
E_{i_2} \dots a_{k-1} E_{i_k} a_k$ be the admissible path given by a
fully-admissible extension of $\kappa$. Since $\pi$ is admissible,
we immediately have $\lengthd{d}{Y}\ge \frac{1}{2} \cdot \abs{J}$.

Now we show the lower bound for
the length of $h_a(Y)$. Realize that there are at most two indices $j$ such that
\begin{equation}\label{EQ:P:propertyX:graphs:3}
 Y\cap E_j \text{ is non-degenerate and } Y\not\supseteq E_j
\end{equation}
(indeed, for any such $j$ the edge $E_j$ must contain the $\kappa$-image of an end point of $J$ in its interior).
For simplicity we will assume that there are exactly two $j$'s satisfying (\ref{EQ:P:propertyX:graphs:3}) --- we denote them by $j_1,j_2$ --- and that there is an index $j$ such that $E_j\subseteq Y$; the other cases can be described analogously.
Let $j_0$ be the smallest index $j$ such that $E_j\subseteq Y$. Then using (\ref{EQ:P:propertyX:graphs:1}) we have
\begin{equation}\label{EQ:P:propertyX:graphs:4}
\begin{split}
 \lengthd{d}{Y} &= \sum_j \lengthd{d}{E_j\cap Y}
 \ \le\
 \lengthd{d}{E_{j1}\cap Y} + \lengthd{d}{E_{j_2}\cap Y} + \sum_{j\ge j_0} \lengthd{d}{E_j}
 \ \le \
\\
 &\le \
 \lengthd{d}{E_{j_1}\cap Y} + \lengthd{d}{E_{j_2}\cap Y} + \lengthd{d}{E_{j_0}}/(1-q).
\end{split}
\end{equation}
On the other hand, Lemma~\ref{L:freeArc} gives
\begin{equation*}
\begin{split}
 \abs{h_a(Y)}
 \ &\ge \
 \max\{
  \abs{h_a(E_{j_1}\cap Y)}, \abs{h_a(E_{j_2}\cap Y)}, \abs{h_a(E_{j_0})}
 \}
\\
 \ &\ge \
 \frac 12\cdot
 \max\{
  \lengthd{d}{E_{j_1}\cap Y}, \lengthd{d}{E_{j_2}\cap Y}, \lengthd{d}{E_{j_0}}
 \}.
\end{split}
\end{equation*}
The simple fact that
\begin{equation}\label{EQ:max-sum}
 \max\limits_{i=1,\dots,p} c_i
 \le
 \sum\limits_{i=1}^{p} c_i
 \le
 p\cdot  \max\limits_{i=1,\dots,p} c_i
 \qquad
 \text{for any non-negative }c_1,\dots,c_p,
\end{equation}
applied to (\ref{EQ:P:propertyX:graphs:4})
immediately implies
$\abs{h_a(Y)}\ge   \lengthd{d}{Y}\cdot (1-q)/6$.

The final assertion of the lemma follows from the facts that
$\lengthd{d}{G}\le \lengthd{d}{E_0}/(1-q)$ and $\abs{h_a(G)}\ge
\abs{h_a(E_0)} \ge \lengthd{d}{E_0}/2$ by Lemma~\ref{L:freeArc}.
\end{proof}

\subsection{The construction of $d,g,h$}\label{SS:construction}
Now we embark on the construction of a convex metric $d$ on $X$ and
Lipschitz surjections
$g:[0,\alpha]\to X$, $h:X\to [0,\beta]$
for a given totally regular continuum $X$ (see Lemma~\ref{P:props-g-h}).

Let $0<q<1$, let $X$ be a non-degenerate totally regular continuum and let
$a,b$ be two points of $X$.
By \cite{BNT} there is an inverse sequence
$(X_n,f_{n})_{n\in\NNN}$ of graphs $X_n$ with monotone surjective bonding maps $f_{n}:X_{n+1}\to X_n$
such that $X$ is (homeomorphic to) the inverse limit
\begin{equation}\label{EQ:invlim}
 \lim_{\longleftarrow} (X_n,f_{n}) \, .
\end{equation}
Without loss of generality we may assume that for every integer $n\ge 1$ the following hold:
\begin{itemize}
    \item there is $\tilde{x}_n\in X_n$ such that
      $\tilde{X}_{n+1}=f_{n}^{-1}(\tilde{x}_n)$
      is a non-degenerate subgraph of $X_{n+1}$;
    \item $f_{n}^{-1}(x)$ is a singleton for every $x\ne \tilde{x}_n$;

  \medskip

    \item $\tilde{x}_n$ is a vertex of $X_n$;
    \item every vertex of $\tilde{X}_{n+1}$ is a vertex of $X_{n+1}$;
      moreover, every point of the boundary
      (in $X_{n+1}$) of $\tilde{X}_{n+1}$ is a vertex of
      both $\tilde{X}_{n+1}$ and
      $X_{n+1}$; so an edge of $\tilde{X}_{n+1}$ is also an edge of ${X}_{n+1}$;
    \item the $f_{n}$-preimage of every vertex $x\ne \tilde{x}_n$ of $X_n$
     is a vertex of $X_{n+1}$; so the $f_{n}$-image of any edge in $X_{n+1}$ which is not
     an edge of $\tilde{X}_{n+1}$ is a free arc contained in an edge of $X_n$.
\end{itemize}
Let $\pi_n:X\to X_n$ ($n\in\NNN$) be the natural projections; put
$a_n=\pi_n(a)$, $b_n=\pi_n(b)$.
We may assume that $a_n,b_n$ are
vertices of $X_n$ and, if $a\ne b$,
$a_1\ne b_1$
(otherwise we remove finitely many of the first $X_n$'s). Then
$a_n\ne b_n$ for every $n$
provided $a\ne b$.

Let $d_1$ be a convex metric on $X_1$ obtained using Lemma~\ref{L:graph:admissible2}
such that
\begin{equation}\label{EQ:metric-d1}
 \lengthd{d_1}{X_1}= 1-q
\end{equation}
and let $g_1:I_1\to X_1$ be a fully-admissible map for $(X_1,d_1)$ from $a_1$ to $b_1$.
Assume that $n\ge 2$ and that for every $1\le m\le n-1$ we have defined
a metric $d_m$ on $X_m$,
maps $g_m:I_m\to X_m$ and, provided $m\ge 2$, a map $\varrho_{m-1}: I_{m}\to I_{m-1}$.
Put
\begin{equation}\label{EQ:mu-n}
 \mu_{n-1}= \min\{\lengthd{d_{n-1}}{E}:\ E \text{ is an edge of } X_{n-1}\}.
\end{equation}
Let $\tilde{d}_n$ be a convex metric on $\tilde{X}_n$ obtained from Lemma~\ref{L:graph:admissible2} such that
\begin{equation}\label{EQ:def-dn-tilde}
 \lengthd{\tilde{d}_n}{\tilde{X}_n} < \frac{q\cdot \mu_{n-1}}{2p}
 \qquad\text{where}\quad
  p=\card g_{n-1}^{-1}(\tilde{x}_{n-1}).
\end{equation}
Denote by $d_n$ the only convex metric on $X_n$ such that for every edge $E$ of $X_n$
and every two points $x,y\in E$ the following holds:
\begin{equation}\label{EQ:def-dn}
 d_n(x,y) =
 \begin{cases}
  \tilde{d}_n(x,y)                                      
  & \text{if } E\subseteq \tilde{X}_n;
 \\
  d_{n-1}(f_{n-1}(x), f_{n-1}(y)) & \text{otherwise}.
 \end{cases}
\end{equation}

\bigskip

Let $s_1<s_2<\dots < s_p$ be the points of $I_{n-1}$
mapped by $g_{n-1}$ to $\tilde{x}_{n-1}$. Write
$I_{n-1}$ as the union $J_0'\cup J_1'\cup\dots\cup J_p'$
of non-overlapping compact subintervals such that
$J_0'\le s_1\le J_1'\le s_2 \dots\le s_p\le J_p'$
(here $J_0',J_p'$ can be degenerate).
For every $i=1,\dots,p$ let $K_i'$ be an interval and $\kappa_i:K_i'\to (\tilde{X}_n,\tilde{d}_n)$
be a fully-admissible map (see Lemma~\ref{L:graph:admissible}); the images of end points
of $K_i'$ will be fixed later.
Now let $I_n=[0,\alpha_n]$ be a compact interval of length $\alpha_n=\abs{I_{n-1}} + \sum_{i=1}^p \abs{K_i'}$
and define $g_n:I_n\to X_n$ by ``concatenating'' the maps
$$
 g_{n-1}|_{J_0'},\
 \kappa_1,\
 g_{n-1}|_{J_1'},\
 \kappa_2,\
 \dots,
 \kappa_p,\
 g_{n-1}|_{J_p'}.
$$
I.e.~we write $I_n$ as the union of non-overlapping compact intervals
\begin{equation}\label{EQ:def-In}
 I_n = J_0 \cup K_1 \cup J_1 \cup K_2 \dots \cup K_p \cup J_p
\end{equation}
such that $J_0\le K_1\le\dots\le K_p\le J_p$ and $\abs{J_i}=\abs{J_i'}$,
$\abs{K_j}=\abs{K_j'}$ for every $i,j$; then we define $g_n$ such that
\begin{equation}\label{EQ:def-gn}
 g_n|_{J_i} \approx g_{n-1}|_{J_i'}
 \quad\text{and}\quad
 g_n|_{K_j} \approx \kappa_j
 \qquad\text{for every }i,j.
\end{equation}
(Here we write $f\approx g$ for maps $f,g$ defined on real intervals $J,K$ if
there is a constant $s_0$ such that $J=K+s_0$ and $f(s+s_0)=g(s)$ for every $s\in K$.)
By an ``appropriate'' specification of $\kappa_i$-images of the end points of $K_i'$ we obtain that $g_n$ is continuous and that $g_n(0)=a_n$, $g_n(\alpha_n)=b_n$. Notice that
\begin{equation}\label{EQ:gn-is-parametrization}
\begin{split}
 g_n:I_n\to (X_n,d_n)
  \quad\text{is a natural parametrization of some }
 \\
  \text{(not necessarily admissible) path in } X_n \text{ from } a_n \text{ to } b_n.\quad
\end{split}
\end{equation}

Let $\varrho_{n-1}:I_n\to I_{n-1}$ be the piecewise linear continuous surjection
with slopes $0$ and $1$ which collapses every $K_i$ into a point. For $1\le k<n$ denote the composition
$\varrho_{k}\circ\varrho_{k+1}\circ\dots\circ\varrho_{n-1}$ by
$\varrho_{n,k}:I_n\to I_k$; for
convenience put $\varrho_{n,n}=\id_{I_n}$. Analogously define
$f_{n,k}:X_n\to X_k$ for $1\le k\le n$. Notice that the following diagram commutes
for every $1\le k\le n$:
\begin{equation}\label{EQ:cd-gnk}
      \begin{CD}
       X_{n}  @>{f_{n,k}}>> X_{k}
      \\
       @A{g_{n}}AA @AA{g_{k}}A
      \\
       I_{n}  @>{\varrho_{n,k}}>> I_{k}
      \end{CD}
\end{equation}

\bigskip

After finishing the induction we obtain the metrics $d_n$ on $X_n$ and
the maps $g_n:I_n\to X_n$.
As in \cite{BNT} define
\begin{equation}\label{EQ:def-d}
 d(x,y)
 =\sup\limits_{n\in\NNN} d_n(x_n,y_n)
 \qquad\text{for }
 x=(x_n)_n,\ y=(y_n)_n\in X.
\end{equation}
(In Lemma~\ref{L:metric-d} we will show that $d$ is a convex metric on $X$.)
Define also
$$
 I_\infty = \invlim (I_n,\varrho_n).
$$
The corresponding projection map from $I_\infty$ onto $I_n$  ($n\in\NNN$)
will be denoted by $\pi_n'$. It is easy to see that the
map
$$
 \eta:I_\infty \to [0,\alpha],
 \quad
 (s_n)_{n\in\NNN} \mapsto t
  =\lim\limits_{n\to\infty} s_n  =\sup\limits_{n\in\NNN} s_n,
 \qquad\text{where }
 \alpha=\lim_{n\to\infty} \alpha_n,
$$
defines a homeomorphism of $I_\infty$ onto the interval $[0,\alpha]$,
which is even isometry
if we use the following metric $d'$ on $I_\infty$ (see Lemma~\ref{L:Iinfty-props}):
\begin{equation}\label{EQ:d'-def}
 d'(s,t) = \sup_n \abs{s_n-t_n}
 \qquad
 \text{for }s=(s_n)_n, t=(t_n)_n\in I_\infty.
\end{equation}

Since the diagrams in (\ref{EQ:cd-gnk}) commute,
the surjective maps $g_n:I_n\to X_n$ induce the continuous surjective
map $g=\invlim\{g_n\}:I_\infty\to X$ between
 $I_\infty = \invlim(I_n,\varrho_n)$ and
$X = \invlim(X_n,f_n)$ such that the following diagram commutes
\begin{equation}\label{EQ:cd-g}
      \begin{CD}
       X  @>{\pi_{n}}>> X_{n}
      \\
       @A{g}AA @AA{g_{n}}A
      \\
       I_{\infty}  @>{\pi_{n}'}>> I_{n}
      \end{CD}
\end{equation}
(see Theorem~\ref{T:inducedMap}); the map $g$ is given by
\begin{equation}\label{EQ:g-def}
 g(s_1,s_2,s_3\dots)=(g_1(s_1),g_2(s_2),g_3(s_3),\dots).
\end{equation}

Finally define $h_n:X_n\to \RRR$, $h:X\to\RRR$ by
\begin{equation}\label{EQ:def-h}
 h_n(x_n)=d_n(x_n,a_n) \quad\text{for } x_n\in X_n,
 \qquad
 h(x)=d(x,a)  \quad\text{for } x\in X.
\end{equation}

\subsection{Properties of the metrics $d_n$, $d$}\label{SS:proof-props-dn}
Notice that (\ref{EQ:def-dn-tilde}) and the fact that every
$\tilde{X}_n$ is non-degenerate immediately give
\begin{equation}\label{EQ:mun-q}
 \mu_{n} < q\cdot \mu_{n-1}
 \qquad\text{for every }n\ge 2.
\end{equation}
Since $\mu_1 \le 1-q$ by (\ref{EQ:metric-d1}) we have
\begin{equation}\label{EQ:mun-q2}
 \mu_n
 \le q^{n-1}\cdot (1-q)
 \qquad\text{for every }n\in\NNN.
\end{equation}

\begin{lemma}\label{L:metric-d}
 The map $d$ is a convex metric on $X$ compatible with the topology of $X$.
\end{lemma}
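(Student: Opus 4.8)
The plan is to verify the three requirements separately: that $d$ is a metric, that it induces the product topology on $X$, and that it is convex.

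First I would check that $d$ is well-defined and finite. For $x=(x_n)_n$, $y=(y_n)_n\in X$ we have $d_n(x_n,y_n)\le \lengthd{d_n}{X_n}$, and by construction $\lengthd{d_n}{X_n}$ is controlled: the metric $d_n$ agrees with $d_{n-1}$ on the part of $X_n$ outside $\tilde X_n$ and replaces the single point $\tilde x_{n-1}$ by the copy $(\tilde X_n,\tilde d_n)$ whose length is tiny by \eqref{EQ:def-dn-tilde}. An easy induction using \eqref{EQ:metric-d1} then gives $\lengthd{d_n}{X_n}\le 1-q+\sum_{m=2}^n \lengthd{\tilde d_m}{\tilde X_m}$, which is bounded (indeed bounded by $1$, anticipating Lemma~\ref{L:d-H1}). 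Hence the supremum in \eqref{EQ:def-d} is finite. Symmetry and the triangle inequality are inherited from each $d_n$ and pass to the supremum. For positivity: if $x\ne y$ then $x_n\ne y_n$ for some $n$ (since the $\pi_n$ separate points of the inverse limit), so $d_n(x_n,y_n)>0$ and thus $d(x,y)>0$.

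Next, compatibility with the topology. The inequality $d_n(x_n,y_n)\le d(x,y)$ shows each projection $\pi_n:(X,d)\to(X_n,d_n)$ is Lipschitz-$1$, hence continuous, so the identity from $(X,d)$ to $X$ (with the product/inverse-limit topology) is continuous. For the reverse direction one uses that the "new part'' introduced at stage $n$ has small length: given $\eps>0$, choose $N$ with $\mu_N<\eps$ (possible by \eqref{EQ:mun-q2}); I claim that for $x,y\in X$, if $d_N(x_N,y_N)$ is small and $x_N=y_N$ lies outside all the exceptional subgraphs, then $d_m(x_m,y_m)$ stays small for all $m>N$ as well, because beyond stage $N$ the points can only differ inside successive copies $\tilde X_m$ whose diameters are summably small. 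More precisely, $d(x,y)\le d_N(x_N,y_N)+\sum_{m>N}\lengthd{\tilde d_m}{\tilde X_m}$ when $\pi_N(x)=\pi_N(y)$ is not the branch point $\tilde x_N$, and a routine refinement handles the branch point using \eqref{EQ:mun-q2}. This shows the topology of $(X,d)$ is coarser than, hence equal to, the inverse-limit topology; since the latter is compact and the former Hausdorff, $d$ is compatible.

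Finally, convexity. Here I would follow \cite{BNT}: given distinct $x,y\in X$, one must produce $z$ with $d(x,z)=d(z,y)=d(x,y)/2$. The idea is to use the geodesic/convexity structure present on each graph $(X_n,d_n)$ — each $d_n$ is convex by construction (it is glued from convex metrics $d_{n-1}$ and $\tilde d_n$ via \eqref{EQ:def-dn}, and a metric glued from convex pieces along a point is again convex) — and then pass to the limit, using that the bonding maps $f_n$ are monotone and $d_n$ refines $d_{n-1}$ in a compatible way so that midpoints can be chosen coherently. Concretely, one picks a geodesic arc $A$ from $x$ to $y$ in $(X,d)$ (or rather constructs it as an inverse limit of geodesic segments $A_n\subseteq X_n$ from $x_n$ to $y_n$) and takes $z$ to be its midpoint; the key point is that $d(x,y)=\sup_n d_n(x_n,y_n)$ is actually attained, or approximated consistently, along a single arc, so that the midpoint relation $d(x,z)+d(z,y)=d(x,y)$ holds with each summand equal to $d(x,y)/2$.

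I expect the convexity verification to be the main obstacle: it requires checking that midpoints chosen stage-by-stage in the graphs $X_n$ converge (not merely subconverge) to a single point of $X$ realizing the midpoint in the sup-metric, which is exactly where the careful bookkeeping of \eqref{EQ:mun-q}–\eqref{EQ:mun-q2} and the monotonicity of the bonding maps are used, mirroring the argument of \cite{BNT}.
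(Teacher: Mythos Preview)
Your outline is broadly correct for the metric axioms and for compatibility, but you are missing the one estimate that makes the whole proof short, and this leads you to misdiagnose the convexity step.

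The paper first derives, from \eqref{EQ:def-dn} and \eqref{EQ:mun-q}, the uniform two-sided bound
\[
 d_n(x_n,y_n)\ \le\ d(x,y)\ \le\ d_n(x_n,y_n)+q^{n}
 \qquad\text{for every }n\text{ and every }x,y\in X.
\]
Once you have this, compatibility with the inverse-limit topology is immediate: if $d_n(x_n^{(k)},x_n)\to 0$ for every $n$, then $\limsup_k d(x^{(k)},x)\le q^n$ for every $n$, hence $=0$. Your treatment of compatibility is heading in this direction but never isolates the inequality; the case split ``$x_N=y_N$ outside the exceptional subgraphs / a routine refinement at the branch point'' is unnecessary once you have it.

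More importantly, the convexity argument does \emph{not} require building a coherent inverse system of geodesics, nor does it require that stage-by-stage midpoints converge rather than merely subconverge. The paper simply picks, for each $n$, a midpoint $z_n\in X_n$ of $x_n,y_n$, lifts it arbitrarily to $z^{(n)}\in X$ with $\pi_n(z^{(n)})=z_n$, and passes to a convergent subsequence $z^{(n_k)}\to z$ by compactness. The displayed inequality above then gives
\[
 d(x,z)\le d(x,z^{(n_k)})+d(z^{(n_k)},z)\le d_{n_k}(x_{n_k},z_{n_k})+q^{n_k}+d(z^{(n_k)},z)
 =\tfrac12 d_{n_k}(x_{n_k},y_{n_k})+q^{n_k}+d(z^{(n_k)},z),
\]
and letting $k\to\infty$ yields $d(x,z)\le\tfrac12 d(x,y)$; symmetrically $d(y,z)\le\tfrac12 d(x,y)$, hence equality in both. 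So the obstacle you anticipate (forcing full convergence of midpoints, or producing an inverse-limit geodesic) is not there; a single subsequential limit already works because the estimate $d\le d_n+q^n$ controls the error uniformly. Your proposed route via geodesic arcs in $(X,d)$ is also circular as stated, since existence of geodesics is what you are trying to establish.
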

\begin{proof} (See \cite{BNT}.)
Let $n\ge 2$. From the definition (\ref{EQ:def-dn}) of the metrics $d_n$ we have that
for every $x,y\in X_n$, $x'=f_{n-1}(x)$, $y'=f_{n-1}(y)$
\begin{equation}\label{EQ:dn}
 d_{n-1}(x',y')
 \le d_n(x,y)
 < d_{n-1}(x',y') + q\cdot \mu_{n-1}
\end{equation}
and that, for every free arc $A$ in $X_n\setminus \interior{\tilde{X}_n}$,
\begin{equation}\label{EQ:fn-isometry}
 f_{n-1}|_A: A\to f_{n-1}(A)
 \quad\text{is a bijection and}\quad
 \lengthd{d_n}{A}=\lengthd{d_{n-1}}{f_{n-1}(A)}.
\end{equation}

Combining (\ref{EQ:dn}) and (\ref{EQ:mun-q}) we obtain
that for $m>n$
$$
 d_m(x_m,y_m)
 < d_n(x_n,y_n) + q\cdot(\mu_{m-1}+\dots +\mu_{n})
 < d_n(x_n,y_n) + \frac{q}{1-q}\cdot \mu_n,
$$
so, since $\mu_n\le q^{n-1}\mu_1 \le q^{n-1}(1-q)$,
\begin{equation}\label{EQ:d-dn}
 d_n(x_n,y_n)
 \le
 d(x,y)
 \le d_n(x_n,y_n) + q^{n}
 \qquad
 \text{for every } n\in\NNN,\ x,y\in X.
\end{equation}
Hence $d(x,y)$ is always finite. Since trivially
$d$ is symmetric, satisfies the triangle inequality and $d(x,y)=0$ if and only if
$x=y$, $d$ defines a metric on $X$.

To prove that the metric $d$ is compatible with the topology of $X$ it suffices to show
that any sequence $(x^{(k)})_k$ converges to $x$ in $(X,d)$ if and only if
$(\pi_n x^{(k)})_k$ converges to $\pi_n x$ in $(X_n,d_n)$ for every $n$.
The implication from the left to the right is trivial. Assume that $\lim_k d_n(x_n^{(k)},x_n)= 0$
for every $n$, where $x_n^{(k)}=\pi_n x^{(k)}$ and $x_n=\pi_n x$.
By (\ref{EQ:d-dn}) we have for every $n$
$$
 \limsup_{k\to\infty} d(x^{(k)},x)
 \le
 \limsup_{k\to\infty} \left[  d_n(x_n^{(k)},x_n) + q^n  \right]
 =q^n.
$$
Since $q<1$ we have that $\limsup_{k} d(x^{(k)},x)=0$.

Now it suffices to show that $d$ is convex.
Let $x=(x_n)_n$, $y=(y_n)_n\in X$. For every $n$ the metric $d_n$ is convex
so there is a point $z_n\in X_n$ such that
$$
 d_n(x_n,z_n)=d_n(y_n,z_n)=\frac 12\cdot d(x_n,y_n).
$$
Let $z^{(n)}\in X$ be such that $\pi_n(z^{(n)})=z_n$. Compactness of
$X$ gives that $(z^{(n)})_n$ has a subsequence $(z^{(n_k)})_k$ converging
to a point $z\in X$. Now (\ref{EQ:d-dn}) gives
\begin{equation*}
\begin{split}
 d(x,z)
 &\le  d(x,z^{(n_k)}) +  d(z^{(n_k)},z)
 \le  d_{n_k}(x_{n_k},z_{n_k}) + q^{n_k} + d(z^{(n_k)},z)
\\
 &=    \frac{1}{2}\cdot d_{n_k}(x_{n_k},y_{n_k})   + q^{n_k} + d(z^{(n_k)},z).
\end{split}
\end{equation*}
Using (\ref{EQ:d-dn}) and taking the limit $k\to \infty$ and  we obtain
$d(x,z)\le \frac{1}{2}\cdot d(x,y)$.
Analogously, $d(y,z)\le \frac{1}{2}\cdot d(x,y)$ and so
$d(x,z)=d(y,z)=\frac{1}{2}\cdot d(x,y)$. Hence $d$ is convex.
\end{proof}

The inequalities (\ref{EQ:d-dn}) immediately imply
that for the diameters of a subset $B$ of $X$ and
its projections $B_n=\pi_n(B)$ ($n\in \NNN$) it holds that
\begin{equation}\label{EQ:d-dn-set}
 d_n(B_n)
 \le
 d(B)
 \le d_n(B_n) + q^{n}.
\end{equation}
In Lemma~\ref{L:d-H1} we show a relation between the Hausdorff one-dimensional
measure of a subset of $X$ and of its projections.
To this end we need the following refinement of (\ref{EQ:d-dn-set}) for the special case when
$B$ is a subcontinuum of $X$.

\begin{lemma}\label{L:diam-Y-Yn}
 Let $Y$ be a subcontinuum of $X$ and let $n\in\NNN$.
 Put $Y_n=\pi_n(Y)$ and for every integer $k\ge n$ put
 $\tilde{y}_k=f_{k,n}(\tilde{x}_k)\in X_n$.
 Then
 $$
  d(Y)
  \le d_n(Y_n)
   + q\cdot \sum\limits_{k\ge n,\ \tilde{y}_{k}\in Y_n} \mu_k.
 $$
\end{lemma}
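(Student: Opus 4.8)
\noindent\emph{Proof proposal.} The plan is to express $d(Y)=\sup_{x,y\in Y}d(x,y)$ as the level-$n$ diameter plus a telescoping series whose $k$-th term records the extra diameter produced when $X_k$ is refined to $X_{k+1}$, and then to show that this term vanishes unless $Y$ ``sees'' the blown-up subgraph $\tilde{X}_{k+1}$, i.e.\ unless $\tilde{y}_k\in Y_n$. First I would fix $x=(x_m)_m,\,y=(y_m)_m\in Y$. By the left inequality in (\ref{EQ:dn}) the sequence $(d_m(x_m,y_m))_m$ is non-decreasing, and by (\ref{EQ:d-dn}) it is bounded, so $d(x,y)=\lim_m d_m(x_m,y_m)$ and hence
\[
 d(x,y)=d_n(x_n,y_n)+\sum_{k\ge n}\big(d_{k+1}(x_{k+1},y_{k+1})-d_k(x_k,y_k)\big),
\]
where each summand is non-negative and, again by (\ref{EQ:dn}), strictly smaller than $q\mu_k$. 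Taking the supremum over $x,y\in Y$, using that $\pi_n(Y)=Y_n$ (so $\sup_{x,y\in Y}d_n(x_n,y_n)=d_n(Y_n)$), and bounding the supremum of a sum of non-negative functions by the sum of their suprema, I get
\[
 d(Y)\le d_n(Y_n)+\sum_{k\ge n}\delta_k,\qquad \delta_k:=\sup_{x,y\in Y}\big(d_{k+1}(x_{k+1},y_{k+1})-d_k(x_k,y_k)\big)\in[0,q\mu_k].
\]

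The core of the argument is then to prove that $\delta_k=0$ whenever $\tilde{y}_k\notin Y_n$. Since $Y_n=f_{k,n}(Y_k)$ and $\tilde{y}_k=f_{k,n}(\tilde{x}_k)$, the assumption $\tilde{y}_k\notin Y_n$ forces $\tilde{x}_k\notin Y_k$; because $\tilde{X}_{k+1}=f_k^{-1}(\tilde{x}_k)$ and $Y_k=f_k(Y_{k+1})$, this yields $Y_{k+1}\cap\tilde{X}_{k+1}=\emptyset$, and in fact $f_k$ restricts to a homeomorphism of $Y_{k+1}$ onto $Y_k$. It then suffices to show that $d_{k+1}(u,v)=d_k(f_k(u),f_k(v))$ for all $u,v\in Y_{k+1}$: ``$\ge$'' is part of (\ref{EQ:dn}), while for ``$\le$'' one takes a geodesic arc of $(X_k,d_k)$ from $f_k(u)$ to $f_k(v)$ and lifts it to a path of equal $d_{k+1}$-length from $u$ to $v$, using that $f_k$ maps $X_{k+1}\setminus\interior{\tilde{X}_{k+1}}$ bijectively onto $X_k\setminus\{\tilde{x}_k\}$ and is an isometry on free arcs, cf.\ (\ref{EQ:fn-isometry}). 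The subtle point, which I expect to be the main obstacle, is that such a geodesic may run through the vertex $\tilde{x}_k$; one must then either select a geodesic avoiding $\tilde{x}_k$ or lift it ``across'' $\tilde{X}_{k+1}$ without adding length, relying on the connectedness of $Y_{k+1}$, the monotonicity of $f_k$, and the fact that $\tilde{X}_{k+1}$ has tiny $\tilde{d}_{k+1}$-diameter by (\ref{EQ:def-dn-tilde}).

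Granting this claim, $\delta_k=0$ for every $k$ with $\tilde{y}_k\notin Y_n$, so only the indices with $\tilde{y}_k\in Y_n$ survive in the series; since $\delta_k\le q\mu_k$ for those, this gives $d(Y)\le d_n(Y_n)+q\sum_{k\ge n,\ \tilde{y}_k\in Y_n}\mu_k$, which is the assertion, and the series converges because $\sum_k\mu_k<\infty$ by (\ref{EQ:mun-q})--(\ref{EQ:mun-q2}). If the geodesic-through-$\tilde{x}_k$ difficulty turns out to be awkward to handle pointwise, an alternative route is to fix a pair $x,y\in Y$ realizing $d(Y)$ together with a geodesic arc $A$ of $(X,d)$ joining them, to decompose $\lengthd{d}{A}$ level by level as above, to observe that $\pi_k$ does not shorten $A$ at those levels $k$ with $\tilde{x}_k\notin\pi_k(A)$, and finally to compare $\pi_n(A)$ and the contributing indices with $Y_n$.
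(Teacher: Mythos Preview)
Your telescoping approach is exactly the paper's induction on $m-n$, rewritten as a series. The paper proves $d_m(Y_m)\le d_n(Y_n)+q\sum_{k\in N_{m,n}}\mu_k$ level by level, and its base step $m-n=1$ splits into ``$\tilde x_n\in Y_n$'' (handled by the right half of (\ref{EQ:dn})) and ``$\tilde x_n\notin Y_n$'', where it asserts $d_{n+1}(Y_{n+1})=d_n(Y_n)$ citing only ``(\ref{EQ:fn-isometry}) and convexness of $d_{n+1}$''. Your claim that $\delta_k=0$ whenever $\tilde y_k\notin Y_n$ is the pointwise form of precisely this step, and you are right to single out the geodesic-through-$\tilde x_k$ issue as the crux.

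Neither of your suggested resolutions actually closes that gap, however. One cannot in general select a $d_k$-geodesic between two points of $Y_k$ that avoids $\tilde x_k$, and lifting a geodesic across $\tilde X_{k+1}$ \emph{does} add length. A concrete obstruction: let $X_k$ be a theta-graph with vertices $p,q$, set $\tilde x_k=p$, and let $Y_k$ be an arc through $q$ that approaches $p$ along two of the three edges without reaching it. For pairs $u,v\in Y_{k+1}$ near the two ``tips'' of $Y_{k+1}$, the unique short route in $X_k$ between $f_k(u)$ and $f_k(v)$ goes through $p$, and in $X_{k+1}$ it must cross $\tilde X_{k+1}$; one gets $d_{k+1}(u,v)>d_k(f_k(u),f_k(v))$ and even $d_{k+1}(Y_{k+1})>d_k(Y_k)$, so $\delta_k>0$. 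Thus the pointwise equality you are aiming for---and the diameter equality the paper asserts at this point---fail in general; the paper's one-line justification is no more complete than your sketch, and your ``alternative route'' via a geodesic in $(X,d)$ meets the same obstruction level by level. The mitigating fact is that the sole use of the lemma, in Lemma~\ref{L:d-H1}, needs only a summed inequality over a covering of $X_n$, which can be obtained with a weaker level-by-level estimate.
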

\begin{proof}
 For $m>n$ let $N_{m,n}$ be the set of all integers $k\in\{n,n+1,\dots,m-1\}$ such that
 $\tilde{y}_{k}\in Y_n$. By the definition (\ref{EQ:def-d}) of $d$ it suffices to show
 that for every $m>n$
 \begin{equation}\label{EQ:L:diam-Y-Yn:1}
  d_m(Y_m)
  \le d_n(Y_n)
   + q\cdot \sum\limits_{k\in N_{m,n}} \mu_k.
 \end{equation}
 We prove this by induction through $m-n$. Assume first that $m-n=1$; then $N_{m,n}$ is
 either the singleton $\{n\}$ or an empty set  according to whether
 $\tilde{y}_n=\tilde{x}_n$ either belongs to $Y_n$ or not.
 In the latter case
 $\tilde{y}_n=\tilde{x}_n\not\in Y_n$; since $Y_{n+1}$ is a subgraph of $X_{n+1}$,
 (\ref{EQ:fn-isometry}) and convexness of $d_{n+1}$  give that
 $d_{n+1}(Y_{n+1})=d_n(Y_n)$. In the former case (when $\tilde{x}_n\in Y_n$)
 we analogously have
 $d_{n+1}(Y_{n+1})\le d_n(Y_n) + d_{n+1}(\tilde{X}_{n+1})
 \le d_n(Y_n) + q\cdot \mu_n$. Hence (\ref{EQ:L:diam-Y-Yn:1})
 is true for any $m,n$ such that $m-n=1$.

 Now assume that for some $p\ge 1$ (\ref{EQ:L:diam-Y-Yn:1}) is true whenever $m-n\le p$;
 let $m,n$ be such that $m-n=p+1$. By the induction hypothesis
 \begin{eqnarray*}
  d_m(Y_m)
  &\le& d_{n+1}(Y_{n+1})
   + q\cdot \sum\limits_{k\in N_{m,n+1}} \mu_k
 \\
  d_{n+1}(Y_{n+1})
  &\le& d_{n}(Y_{n})
   + q\cdot \sum\limits_{k\in N_{n+1,n}} \mu_k.
 \end{eqnarray*}
 Now (\ref{EQ:L:diam-Y-Yn:1}) follows since
 $N_{m,n} = N_{m,n+1}\cup N_{n+1,n}$
 (indeed, for $k\ge n+1$ we have $f_{k,n+1}(\tilde{x}_k)\in Y_{n+1}$ if and only if
 $f_{k,n}(\tilde{x}_k)\in Y_{n}$).
\end{proof}

\begin{lemma}\label{L:d-H1}
 Let $B\subseteq X$ be a closed set and let $B_n=\pi_n(B)\subseteq X_n$  for every $n\in \NNN$. Then
 $$
  \lengthd{d}{B}
  =
  \sup\limits_{n\in\NNN}\lengthd{d_n}{B_n}
  =
  \lim\limits_{n\to\infty}\lengthd{d_n}{B_n}.
 $$
 Moreover, $\lengthd{d}{X}\le 1$.
\end{lemma}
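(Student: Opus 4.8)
The plan is to prove the two displayed equalities first and then deduce the bound $\lengthd{d}{X}\le 1$ as a special case. For the first part, fix a closed set $B\subseteq X$ and write $B_n=\pi_n(B)$. Since each bonding map $f_n$ is surjective we have $f_n(B_{n+1})=B_n$, and by the commuting diagram $B_n=f_{n,m}(B_m)$ for $m\ge n$. Because $f_{n}$ is Lipschitz-$1$ with respect to $d_n,d_{n-1}$ (this follows from (\ref{EQ:dn}), i.e. $d_{n-1}(f_{n-1}(x),f_{n-1}(y))\le d_n(x,y)$), the sequence $\lengthd{d_n}{B_n}$ is nondecreasing in $n$; hence $\sup_n\lengthd{d_n}{B_n}=\lim_n\lengthd{d_n}{B_n}$, which gives the second equality for free and reduces the problem to showing $\lengthd{d}{B}=\lim_n\lengthd{d_n}{B_n}$.

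For the inequality $\lengthd{d}{B}\ge\sup_n\lengthd{d_n}{B_n}$, I would use that the projection $\pi_n:(X,d)\to(X_n,d_n)$ is Lipschitz-$1$ by (\ref{EQ:d-dn}) (namely $d_n(x_n,y_n)\le d(x,y)$), so $\lengthd{d_n}{B_n}=\lengthd{d_n}{\pi_n(B)}\le\lengthd{d}{B}$ by the general fact that Lipschitz-$1$ maps do not increase $\HHh^1$. The reverse inequality $\lengthd{d}{B}\le\lim_n\lengthd{d_n}{B_n}$ is the crux. Here I would fix $\delta>0$, pick $n$ large enough that $q^n<\delta/2$, and cover $B_n$ by connected sets (e.g. subcontinua) $E_1,\dots,E_k$ with $\sum_i d_n(E_i)$ close to $\lengthd{d_n}{B_n}$ (or to $\lengthd{d_n,\delta'}{B_n}$ for suitable $\delta'$). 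Pulling back, the sets $\pi_n^{-1}(E_i)\cap B$ cover $B$; the point is to estimate their diameters. Using Lemma~\ref{L:diam-Y-Yn} applied to (the subcontinuum generated by) $\pi_n^{-1}(E_i)$, or more directly (\ref{EQ:d-dn-set}), one gets $d(\pi_n^{-1}(E_i)\cap B)\le d_n(E_i)+q\sum_{k\ge n,\,\tilde y_k\in E_i}\mu_k$. The refinement in Lemma~\ref{L:diam-Y-Yn} is what matters: each index $k\ge n$ contributes its term $q\mu_k$ to at most finitely many of the $E_i$ (those whose projection contains $\tilde y_k$), so summing over $i$ the total error is at most $q\sum_{k\ge n}\mu_k\cdot(\text{bounded overlap})$, which by (\ref{EQ:mun-q2}) is $O(q^n)$ and hence $<\delta$. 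Letting $\delta\to0$ yields $\lengthd{d}{B}\le\lim_n\lengthd{d_n}{B_n}$.

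I expect the main obstacle to be the bookkeeping in this last estimate: one must choose the covering of $B_n$ by \emph{connected} pieces (subcontinua or subarcs) so that Lemma~\ref{L:diam-Y-Yn} is applicable, control how many pieces a single $\tilde y_k$ can lie in, and ensure that replacing $\pi_n^{-1}(E_i)$ by a subcontinuum containing it does not spoil the cover of $B$; a clean way is to intersect with $B$ and use that $B$ is closed so the pieces are compact, then apply (\ref{EQ:d-dn-set}) together with the summability (\ref{EQ:mun-q2}) of the $\mu_k$. Care is also needed because $\lengthd{d_n,\delta}{\cdot}$ rather than $\lengthd{d_n}{\cdot}$ appears in the definition, but since we only need an inequality in each direction and then take $\delta\to0$, this is routine.

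Finally, the bound $\lengthd{d}{X}\le 1$ follows by taking $B=X$, so $B_n=X_n$, and using $\lengthd{d}{X}=\lim_n\lengthd{d_n}{X_n}$. By construction $\lengthd{d_1}{X_1}=1-q$ from (\ref{EQ:metric-d1}), and passing from $X_{n-1}$ to $X_n$ the metric $d_n$ agrees with $d_{n-1}$ off $\tilde X_n$ while $\tilde X_n$ has $d_n$-length $<q\mu_{n-1}/(2p)\le q\mu_{n-1}$; more precisely, by (\ref{EQ:fn-isometry}) the part of $X_n$ outside $\interior{\tilde X_n}$ has the same length as the corresponding part of $X_{n-1}$, so $\lengthd{d_n}{X_n}\le\lengthd{d_{n-1}}{X_{n-1}}+\lengthd{d_n}{\tilde X_n}<\lengthd{d_{n-1}}{X_{n-1}}+q\mu_{n-1}\le\lengthd{d_{n-1}}{X_{n-1}}+q^{n-1}(1-q)$ by (\ref{EQ:mun-q2}). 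Summing the geometric series gives $\lengthd{d_n}{X_n}<(1-q)\sum_{j\ge0}q^j=1$ for every $n$, hence $\lengthd{d}{X}=\lim_n\lengthd{d_n}{X_n}\le 1$.
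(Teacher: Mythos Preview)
Your proposal is essentially correct and rests on the same ingredients as the paper---the Lipschitz-$1$ property of $\pi_n$, Lemma~\ref{L:diam-Y-Yn}, and the summability estimate (\ref{EQ:mun-q2})---but the two proofs diverge in how they handle the upper bound $\lengthd{d}{B}\le\sup_n\lengthd{d_n}{B_n}$ for a \emph{general} closed $B$. You attempt it directly: cover $B_n$ by small connected pieces $E_i$, pull back, and invoke Lemma~\ref{L:diam-Y-Yn} piece by piece. This works, but the ``bounded overlap'' issue you flag is real: to keep the error $\sum_i q\sum_{k\ge n,\,\tilde y_k\in E_i}\mu_k$ of order $q^n$ you must arrange that each $\tilde y_k$ lies in at most one $E_i$ (or at worst a number bounded independently of $n$), which requires choosing the $E_i$ as non-overlapping sub-arcs of edges with endpoints avoiding the countable set $\{\tilde y_k:k\ge n\}$. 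This is doable in a finite graph, but it is extra work. The paper sidesteps this entirely: it first proves the inequality only for $B=X$, where one can decompose $X_n$ into finitely many non-overlapping subgraphs $Y^i$ with $\sum_i\lengthd{d_n}{Y^i}=\lengthd{d_n}{X_n}$ exactly and with boundaries avoiding every $\tilde y_k$, so each $\tilde y_k$ lies in precisely one $Y^i$. For arbitrary closed $B$ it then uses a short complementary argument: pick $n$ with $\lengthd{d}{X}<\lengthd{d_n}{X_n}+\eps$, set $C=X\setminus\pi_n^{-1}(B_n)$, and subtract $\lengthd{d}{C}\ge\lengthd{d_n}{X_n\setminus B_n}$ from $\lengthd{d}{X}$ to get $\lengthd{d}{B}<\lengthd{d_n}{B_n}+\eps$. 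Your derivation of $\lengthd{d}{X}\le 1$ via the telescoping estimate $\lengthd{d_n}{X_n}<\lengthd{d_{n-1}}{X_{n-1}}+q^{n-1}(1-q)$ is exactly the paper's argument unpacked.
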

\begin{proof}
First realize that for every closed (open) set $B\subseteq X$
the set $B_n=\pi_n(B)$ is a closed (open) subset of $X_n$, hence Borel measurable.
Moreover,
\begin{equation}\label{EQ:L:d-H1:1}
  \lengthd{d}{B}\ge\lengthd{d_n}{B_n}
  \qquad\text{for every }n
\end{equation}
since $\pi_n|_B:(B,d)\to (B_n,d_n)$ is Lipschitz-$1$.
We need to show that
\begin{equation}\label{EQ:L:d-H1:2}
 \lengthd{d}{B}
  \le
  \sup\limits_{n\in\NNN}\lengthd{d_n}{B_n}.
\end{equation}
We start with the case $B=X$.
To this end take any $\delta>0$ and arbitrary $n$ such that $q^n<\delta/2$.
Write $X_n$ as the union $\bigcup_{i=1}^k Y^i$ of non-overlapping subgraphs $Y^i$
such that for every $i$ the diameter $d_n(Y^i)$ is less than $\delta/2$
and the boundary $\partial(Y^i)$ does not contain any $\tilde{y}_k=f_{k,n}(\tilde{x}_k)$
($k\ge n$). Then $\lengthd{d_n}{X_n}=\sum_{i=1}^k \lengthd{d_n}{Y^i}$.
For $1\le i\le k$ put $Z^i=\pi_n^{-1}(Y^i)$; this is a subcontinuum of $X$ since
$\pi_n$ is monotone.
By (\ref{EQ:d-dn-set}), $d(Z^i) \le d_n(Y^i)+q^n<\delta$; since $X=\bigcup_{i=1}^k Z^i$
we have
$$
 \lengthd{d,\delta}{X} \le \sum_{i=1}^k d(Z^i).
$$
Since for every $k\ge n$ there is just one $i$ such that
$\tilde{y}_k\in Y^i$, Lemma~\ref{L:diam-Y-Yn} gives
$$
 \sum_{i=1}^k d(Z^i)
 \le
 \sum_{i=1}^k  \left(
   d_n(Y^i)
   + q\cdot\sum\limits_{k\ge n,\ \tilde{y}_{k}\in Y^i} \mu_k
 \right)
 \le
 \sum_{i=1}^k  d_n(Y^i)
   + q\cdot\sum\limits_{k=n}^\infty \mu_k.
$$
Thus, using (\ref{EQ:mun-q2}),
$$
 \lengthd{d,\delta}{X} \le \lengthd{d_n}{X_n} + q^n.
$$
Since this is true for every sufficiently large $n$ we have
$\lengthd{d,\delta}{X} \le \sup_n \lengthd{d_n}{X_n}$.
So (\ref{EQ:L:d-H1:2}) is proved for $B=X$
(recall that $\lengthd{d}{X}=\lim_{\delta\to 0} \lengthd{d,\delta}{X}$).

Now let $B$ be an arbitrary closed subset of $X$ and let $\eps>0$.
Take $n\in\NNN$ such that $\lengthd{d}{X} < \lengthd{d_n}{X_n}+\eps$.
Put $B_n=\pi_n(B)$ and $C=X\setminus \pi_n^{-1}(B_n) = \pi_n^{-1}(X_n\setminus B_n)$.
Then $B\cap C=\emptyset$ and
$\lengthd{d}{C}\ge \lengthd{d_n}{X_n\setminus B_n}$ by (\ref{EQ:L:d-H1:1}). So
$$
 \lengthd{d}{B}
 \le \lengthd{d}{X} - \lengthd{d}{C}
 < \left(  \lengthd{d_n}{X_n} + \eps  \right)
     -  \lengthd{d_n}{X_n\setminus B_n}
 = \lengthd{d_n}{B_n} + \eps.
$$
Thus $\lengthd{d}{X}\le \sup_n \lengthd{d_n}{B_n} + \eps$. Since $\eps>0$ was arbitrary
(\ref{EQ:L:d-H1:2}) follows.

Finally, $\lengthd{d}{X}\le 1$ follows from (\ref{EQ:L:d-H1:2}),
(\ref{EQ:metric-d1}) and (\ref{EQ:def-dn-tilde}).
\end{proof}

The following lemma describes the basic properties of
the inverse limit space $(I_\infty, d')=\invlim (I_n,\varrho_n)$ (recall the definition
of $d'$ in (\ref{EQ:d'-def})).
The fact that $I_\infty$ is an arc follows from general results. Indeed,
$I_\infty$ is a locally connected continuum since $I_n$'s are such and the bonding maps
$\varrho_n$ are monotone (see Theorem~\ref{T:monotoneInvLim1}). Moreover,
$I_\infty$ is hereditarily unicoherent since every $I_n$ is such
\cite[2.1.26]{Macias}. So $I_\infty$ is
a dendrite. Since $I_\infty$, being an arc-like continuum, does not contain a triod
\cite[2.1.41]{Macias}, it must be an arc. In our simple case, however, we can easily
prove this fact ``from scratch''.

\begin{lemma}\label{L:Iinfty-props} Put
$\alpha=\sup_n\alpha_n=\lim_n\alpha_n$ (recall that
$\alpha_n$ is the length of $I_n=[0,\alpha_n]$).
Then $\alpha<\infty$ and the following hold:
\begin{enumerate}
    \item[(a)] $d'$ is a metric on $I_\infty$ compatible with the topology;
    \item[(b)] for every $s=(s_n)_{n}, s'=(s_n')_{n}\in I_\infty$ we have
     $$
      d'(s, s')=\lim\limits_{n\to\infty} \abs{s_n-s_n'}
     $$
  \item[(c)] the projection maps $\pi'_n:(I_\infty,d')\to I_n$ ($n\in\NNN$) are
   Lipschitz-$1$;
    \item[(d)] the map
   $$
    \eta:(I_\infty,d') \to [0,\alpha],
    \quad
    s=(s_n)_{n\in\NNN} \ \mapsto\  t
    =\lim\limits_{n\to\infty} s_n  =\sup\limits_{n\in\NNN} s_n
   $$
   is an isometry;
  \item[(e)] for every subcontinuum $J$ of $I_\infty$ it holds that
   $$
    \lengthd{d'}{J}
    = \lim_{n\to\infty} \abs{J_n}
    = \sup_{n\in\NNN} \abs{J_n},
    \qquad\text{where } J_n=\pi_n'(J) \text{ for } n\in\NNN.
   $$
\end{enumerate}
\end{lemma}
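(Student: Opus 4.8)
The plan is to verify the six assertions more or less in the order stated, extracting first the finiteness of $\alpha$ and then building everything on the elementary fact that the sequences $(\alpha_n)_n$ and, more importantly, the coordinate sequences $(s_n)_n$ of points of $I_\infty$ are nondecreasing. Finiteness of $\alpha$ is immediate: by construction $\alpha_n = \abs{I_{n-1}} + \sum_{i=1}^{p}\abs{K_i'}$, where the total length $\sum_i \abs{K_i'} = \lengthd{\tilde d_n}{\tilde X_n}$ of the inserted admissible pieces is controlled by (\ref{EQ:def-dn-tilde}) by $q\mu_{n-1}/2 \le q^{n-1}(1-q)/2$ via (\ref{EQ:mun-q2}); hence $(\alpha_n)_n$ is a nondecreasing sequence with increments summable, so $\alpha = \sup_n\alpha_n = \lim_n\alpha_n < \infty$. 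The key structural observation, to be recorded first, is that each bonding map $\varrho_{n-1}:I_n\to I_{n-1}$ is a piecewise-linear surjection with slopes $0$ and $1$ that is monotone nondecreasing and satisfies $\varrho_{n-1}(t)\le t$ for all $t$ (it collapses the intervals $K_i$ and is the identity elsewhere, up to the shift built into (\ref{EQ:def-gn})). Consequently, for $s=(s_n)_n\in I_\infty$ the sequence $(s_n)_n$ is nondecreasing and bounded by $\alpha$, so $\lim_n s_n = \sup_n s_n$ exists; this is what makes the formula for $\eta$ in (d) well defined.

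For (b) I would argue as follows. Fix $s=(s_n)_n$, $s'=(s_n')_n\in I_\infty$. Since $\varrho_{n-1}$ is Lipschitz-$1$ (slopes $0,1$), the sequence $\abs{s_n - s_n'}$ is nondecreasing in $n$: indeed $\abs{s_{n-1}-s_{n-1}'} = \abs{\varrho_{n-1}(s_n)-\varrho_{n-1}(s_n')}\le \abs{s_n - s_n'}$. Hence $\sup_n\abs{s_n-s_n'} = \lim_n\abs{s_n-s_n'}$, and this equals $d'(s,s')$ by the definition (\ref{EQ:d'-def}). That $d'$ is a metric (part (a)) is then routine: symmetry and the triangle inequality are inherited coordinatewise from the sup, and $d'(s,s')=0$ forces $s_n=s_n'$ for all $n$. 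Compatibility with the product topology on $I_\infty$ follows from the standard argument (as in the proof of Lemma~\ref{L:metric-d}): convergence in $d'$ clearly implies coordinatewise convergence; conversely, given $\eps>0$ choose $N$ with $\alpha - \alpha_N < \eps$ (possible since $\alpha_n\uparrow\alpha$), so that for $n\ge N$ every coordinate difference $\abs{s_n - s_n'}\le \alpha_n \le \alpha$, and more carefully one uses that once the first $N$ coordinates are close, the tail contributes at most $\alpha-\alpha_N<\eps$ because $\abs{s_n-s_n'}\le \abs{s_N-s_N'} + (\alpha_n - \alpha_N)$ — wait, that inequality needs the increments to be controlled, which they are since $s_n - s_N$ and $s_n' - s_N'$ both lie in $[0,\alpha_n-\alpha_N]$. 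So coordinatewise convergence implies $d'$-convergence, giving (a). Part (c) is the remark that $\abs{\pi_n'(s) - \pi_n'(s')} = \abs{s_n - s_n'}\le \sup_m\abs{s_m-s_m'} = d'(s,s')$.

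For (d), the map $\eta$ is well defined by the monotonicity observation above, and it is onto $[0,\alpha]$ because given $t\in[0,\alpha]$ one can take $s_n = \min\{t,\alpha_n\}$ — one must check $\varrho_{n-1}(s_n)=s_{n-1}$, which holds since $\varrho_{n-1}$ is the identity up to the point where it starts collapsing, and the collapsed pieces $K_i$ lie at the ``new'' end of $I_n$ beyond $\alpha_{n-1}$; this is where a little care with the concatenation in (\ref{EQ:def-In})--(\ref{EQ:def-gn}) is needed. To see $\eta$ is an isometry, note $\abs{\eta(s)-\eta(s')} = \abs{\lim_n s_n - \lim_n s_n'} = \lim_n\abs{s_n - s_n'} = d'(s,s')$ by (b); injectivity follows, and since $I_\infty$ is compact and $[0,\alpha]$ Hausdorff, $\eta$ is a homeomorphism, hence an isometry. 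Finally (e): a subcontinuum $J\subseteq I_\infty$ is, via the isometry $\eta$, a subinterval of $[0,\alpha]$, so $\lengthd{d'}{J}$ equals its length $\abs{\eta(J)}$. On the other hand $J_n = \pi_n'(J) = \varrho_{\infty,n}(J)$ is the image of an interval under the monotone nondecreasing map $s\mapsto s_n$, hence an interval, and $\abs{J_n}$ is nondecreasing in $n$ (since $\varrho_{n-1}$ is Lipschitz-$1$ and maps $J_n$ onto $J_{n-1}$); moreover $\abs{J_n} = \abs{\{t\wedge\alpha_n : t\in\eta(J)\}}\to \abs{\eta(J)}$ as $n\to\infty$. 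Thus $\lengthd{d'}{J} = \abs{\eta(J)} = \lim_n\abs{J_n} = \sup_n\abs{J_n}$.

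The main obstacle I anticipate is not any single hard step but the bookkeeping in part (d): one must pin down precisely how the collapsing map $\varrho_{n-1}$ interacts with the shifted concatenation defining $g_n$, in order to be sure that $\varrho_{n-1}(t)\le t$ and that the ``new'' material added at stage $n$ sits at the far end of $I_n$; once that normalization is in hand, every other assertion reduces to the monotonicity of $(s_n)_n$ together with the Lipschitz-$1$ property of the bonding maps, which are both elementary.
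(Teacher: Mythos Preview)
Your argument for $\alpha<\infty$, (a), (b), (c) is essentially the paper's, and your observation that each $\varrho_{n-1}$ is nondecreasing with $\varrho_{n-1}(t)\le t$ (whence $(s_n)_n$ is nondecreasing) is correct. The gap is in (d), precisely at the spot you flagged: the assertion that ``the collapsed pieces $K_i$ lie at the `new' end of $I_n$ beyond $\alpha_{n-1}$'' is false. By (\ref{EQ:def-In}) the intervals $K_1,\dots,K_p$ are \emph{interspersed} with the $J_i$'s inside $I_n$, one for each preimage $s_i\in g_{n-1}^{-1}(\tilde x_{n-1})$, and these $s_i$ lie anywhere in $I_{n-1}$. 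Consequently your explicit candidate $s_n=\min\{t,\alpha_n\}$ is generally \emph{not} a point of $I_\infty$: if $t\le\alpha_{n-1}$ but $t$ lies to the right of $J_0$, then $\varrho_{n-1}(t)<t$, so $\varrho_{n-1}(s_n)\ne s_{n-1}$. The same misconception infects your formula in (e), where you write $\abs{J_n}=\abs{\{t\wedge\alpha_n:\ t\in\eta(J)\}}$; the map $\pi_n'\circ\eta^{-1}$ is not $t\mapsto t\wedge\alpha_n$.

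The repair is painless. For surjectivity of $\eta$, do as the paper does: $\eta(0,0,\dots)=0$, $\eta(\alpha_1,\alpha_2,\dots)=\alpha$, $\eta$ is continuous and $I_\infty$ is connected, so $\eta(I_\infty)=[0,\alpha]$. The isometry part of (d) you already have via (b). For (e), once $\eta$ is an isometry, $\lengthd{d'}{J}=\abs{\eta(J)}$; if $s,s'$ are the endpoints of $J$ (monotone $\pi_n'$ maps $J$ onto the interval $[s_n,s_n']$), then $\abs{J_n}=\abs{s_n-s_n'}$, which is nondecreasing in $n$ and tends to $d'(s,s')=\abs{\eta(J)}$ by (b).
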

\begin{proof}
 Since $\alpha_n\le \alpha_{n-1}+ 2p\cdot \lengthd{d_n}{\tilde{X}_n} < \alpha_{n-1} + q\cdot \mu_{n-1}$
 by (\ref{EQ:def-dn-tilde}),
 the finiteness of $\alpha$ follows immediately from
 (\ref{EQ:mun-q}).
 The assertion (a) can be proved similarly as the fact that $d$ is a compatible metric on
  $X$ (see Lemma~\ref{L:metric-d}). The assertion (b) follows from (\ref{EQ:d'-def}) and
  the fact that
  every $\varrho_n:I_{n+1}\to I_n$ is Lipschitz-$1$ (indeed, if $0\le s'<s\le \alpha_{n+1}$
  then $0\le \varrho_n(s) - \varrho_n(s') \le s-s'$).
  Since (c) is immediate from the definition (\ref{EQ:d'-def}) of
  $d'$ and (e) follows from (d) (indeed, $\lengthd{d'}{J}=\abs{\eta(J)}$),
  we only need to show (d). First realize that every
  $s=(s_n)_{n\in\NNN}\in I_\infty$ is a non-decreasing sequence
  (in fact, every $\varrho_n$ is Lipschitz-$1$ and $\varrho_n(0)=0$) bounded
  from above by $\alpha$, hence $\eta(s)$ is well defined.
  Continuity of $\eta$ is trivial. Since $\eta(0,0,\dots)=0$ and
  $\eta(\alpha_1,\alpha_2,\dots)=\alpha$, $\eta$ is surjective. It remains to show that
  $\eta$ is an isometry. But this is trivial by (b):
  for any $s=(s_n)_{n}, s'=(s_n')_{n}\in I_\infty$ we have
  $d'(s,s')=\lim_n \abs{s_n-s_n'}=\abs{\lim_n s_n - \lim_n s_n'}$. The proof is finished.
\end{proof}

\subsection{Properties of the maps $g_n$, $h_n$}\label{SS:proof-props-gn-hn}
In what follows we prove that the lengths of $g_n$-images and $h_n\circ g_n$ images
of any closed interval $J\subseteq I_n$ are bounded from below by some constant multiple
of the length of $J$, where the constant
does not depend on $n,J$.
Till the end of this subsection fix $n\in\NNN$.

\begin{lemma}\label{L:progs-gn-hn-1vert}
 Let $J\subseteq I_n$ be a compact interval and let $Y=g_n(J)$, $L=h_n(Y)$.
 If $Y$ contains at most one vertex of $X_n$ then
 $$
  \lengthd{d_n}{Y} \ge \frac 12\cdot\abs{J}
  \qquad\text{and}\qquad
  \abs{L}   \ge \frac 14\cdot \lengthd{d_n}{Y}.
 $$
\end{lemma}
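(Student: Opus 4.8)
The plan is to use the structural fact recorded in~(\ref{EQ:gn-is-parametrization}): $g_n$ is the natural parametrization of a path $\pi=a_0E_1a_1\dots a_{k-1}E_ka_k$ in $X_n$, so $I_n$ decomposes into non-overlapping compact intervals $\hat J_1\le\dots\le\hat J_k$ with $g_n|_{\hat J_t}\colon\hat J_t\to E_t$ an isometry for each $t$. First I would observe that, since $Y$ contains at most one vertex of $X_n$, the interval $J$ can meet at most two consecutive pieces of this decomposition. Indeed, let $\hat J_s,\dots,\hat J_e$ be the pieces meeting $J$; if $s<t<e$ then $\hat J_t\subseteq J$ (as $J$ is an interval), whence $Y\supseteq g_n(\hat J_t)=E_t$, and because no edge of a graph is a simple closed curve the two endpoints $a_{t-1}\ne a_t$ of $E_t$ are distinct vertices of $X_n$ contained in $Y$ --- a contradiction. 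So $e\le s+1$, and we may discard the trivial case where $Y=g_n(J)$ is a point.

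In the one-piece case $J\subseteq\hat J_s$, so $g_n|_J$ is an isometry, $\lengthd{d_n}{Y}=\abs J$, and $Y$ is a subarc of the edge $E_s$ which, containing at most one vertex, cannot contain both endpoints of $E_s$; hence the interior of $Y$ lies in the interior of $E_s$ and consists of ordinary points, i.e.~$Y$ is a free arc. Applying Lemma~\ref{L:freeArc} with $a=a_n$ (so that $h_{a_n}=h_n$) gives $\abs L\ge\tfrac12\lengthd{d_n}{Y}$, which is stronger than needed. In the two-piece case write $J=J'\cup J''$ with $J'=J\cap\hat J_s$, $J''=J\cap\hat J_{s+1}$ (meeting in one point); we may assume both are non-degenerate, otherwise we are back in the one-piece case. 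As before, each of $Y'=g_n(J')$, $Y''=g_n(J'')$ is the isometric image of $J'$, $J''$ and is a proper subarc of an edge not containing both its endpoints, hence a free arc, so $\lengthd{d_n}{Y'}=\abs{J'}$, $\lengthd{d_n}{Y''}=\abs{J''}$ and, by Lemma~\ref{L:freeArc}, $\abs{h_n(Y')}\ge\tfrac12\lengthd{d_n}{Y'}$ and $\abs{h_n(Y'')}\ge\tfrac12\lengthd{d_n}{Y''}$.

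The two estimates then drop out. Monotonicity of $\HHh^1_{d_n}$ gives $\lengthd{d_n}{Y}\ge\max\{\abs{J'},\abs{J''}\}\ge\tfrac12(\abs{J'}+\abs{J''})=\tfrac12\abs J$. For the second estimate, $h_n(Y)\supseteq h_n(Y')$ and $h_n(Y)\supseteq h_n(Y'')$ are nested intervals while subadditivity gives $\lengthd{d_n}{Y}\le\lengthd{d_n}{Y'}+\lengthd{d_n}{Y''}$, so
\[
 \abs L=\abs{h_n(Y)}\ \ge\ \tfrac12\max\bigl\{\lengthd{d_n}{Y'},\lengthd{d_n}{Y''}\bigr\}\ \ge\ \tfrac14\bigl(\lengthd{d_n}{Y'}+\lengthd{d_n}{Y''}\bigr)\ \ge\ \tfrac14\lengthd{d_n}{Y}.
\]
I expect the only real obstacle to be the careful bookkeeping in the two ``obvious but fiddly'' points: that $J$ meets at most two pieces (which uses that edges are not loops) and that $Y,Y',Y''$ are honest free arcs (which uses that an endpoint of an edge can never be an interior point of a proper subarc of that edge, so the single permitted vertex of $Y$ always sits at an endpoint of the relevant subarc, never in its interior).
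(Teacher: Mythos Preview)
Your proof is correct and follows essentially the same approach as the paper's: decompose $J$ into at most two subintervals on which $g_n$ is an isometry onto a free arc, apply Lemma~\ref{L:freeArc} to each piece, and combine via the elementary max/sum inequality. The paper's version is terser---it simply asserts the decomposition $J=J^{(0)}\cup J^{(1)}$ ``by the assumption''---whereas you spell out the bookkeeping (why $J$ meets at most two consecutive pieces, why the images are genuine free arcs) that the paper leaves to the reader.
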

\begin{proof}
 By the assumption we can write $J=J^{(0)}\cup J^{(1)}$ and $Y=Y^{(0)}\cup Y^{(1)}$
 such that $J^{(0)},J^{(1)}$ are non-overlapping compact intervals and, for $i=0,1$,  $Y^{(i)}=g_n(J^{(i)})$ is a free arc and $g_n|_{J^{(i)}}:J^{(i)}\to Y^{(i)}$ is an isometry
(see (\ref{EQ:gn-is-parametrization})).
For $i=0,1$ put $L^{(i)}=h_n(Y^{(i)})$. Then the inequalities
\begin{eqnarray*}
 \max \abs{J^{(i)}} \ \le& \abs{J} &\le 2\cdot \max \abs{J^{(i)}}
\\
 \max \lengthd{d_n}{Y^{(i)}} \ \le& \lengthd{d_n}{Y} &\le 2\cdot \max \lengthd{d_n}{Y^{(i)}}
\\
 \max \abs{L^{(i)}} \ \le& \abs{L}  &\
\end{eqnarray*}
together with Lemma~\ref{L:freeArc} and $\lengthd{d_n}{Y^{(i)}}=\abs{J^{(i)}}$ ($i=0,1$)
give the assertions of the lemma.
\end{proof}

\begin{lemma}\label{L:progs-gn-hn-2vert}
 Let $J\subseteq I_n$ be a compact interval and let $Y=g_n(J)$, $L=h_n(Y)$.
 Assume that $Y$ contains at least two vertices of $X_n$ and that
 $Y'=f_{n-1}(Y)$ contains at most one vertex of $X_{n-1}$. Then
 $$
  \lengthd{d_n}{Y} \ge \frac 12\cdot\abs{J}
  \qquad\text{and}\qquad
  \abs{L}   \ge \frac {1-q}{12}\cdot \lengthd{d_n}{Y}.
 $$
\end{lemma}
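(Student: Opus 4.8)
The plan is to work directly with the natural parametrization $g_n|_J$ of $Y=g_n(J)$ supplied by (\ref{EQ:gn-is-parametrization}), and to exploit throughout the dichotomy between the \emph{long} edges of $X_n$ --- those not contained in $\tilde{X}_n$, whose $d_n$-length is at least $\mu_{n-1}$ by (\ref{EQ:fn-isometry}) --- and the \emph{short} ones, i.e.\ the edges of $\tilde{X}_n$, on which $d_n$ coincides with $\tilde{d}_n$: by the construction in Lemma~\ref{L:graph:admissible2} (see (\ref{EQ:P:propertyX:graphs:1})) the short edges can be ordered with geometrically decreasing lengths of ratio $q$, and their total length is $<q\mu_{n-1}/(2p)$. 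Once the combinatorics of the edge-covering determined by $g_n|_J$ is understood, the proof runs parallel to that of Lemma~\ref{L:graph:admissible2}.

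\emph{Step 1 (structure of the covering).} I would first show that $g_n|_J$ traverses each edge of $X_n$ at most twice and, crucially, that \emph{no long edge of $X_n$ is covered entirely by $Y$}. Put $J'=\varrho_{n-1}(J)$, so that $f_{n-1}\circ g_n|_J=g_{n-1}\circ\varrho_{n-1}|_J$ has image $Y'=f_{n-1}(Y)$. Since $Y'$ contains at most one vertex of $X_{n-1}$, the argument from the proof of Lemma~\ref{L:progs-gn-hn-1vert} shows that $g_{n-1}|_{J'}$ consists of at most two monotone pieces, each traversed once; a similar argument shows that $J$ meets at most one of the collapsed intervals $K_i$ (otherwise $g_{n-1}|_{J'}$ would have to traverse a whole edge of $X_{n-1}$ incident to $\tilde{x}_{n-1}$, and then $Y'$ would contain a second vertex). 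Since $f_{n-1}$ is injective off $\tilde{X}_n$ and preserves lengths of free arcs there by (\ref{EQ:fn-isometry}), pulling the two monotone pieces back shows that $Y\setminus\tilde{X}_n$ meets at most two long edges, each only partially; and the part of $g_n|_J$ inside $\tilde{X}_n$ is a sub-path of a single fully-admissible $\kappa_i$, hence covers each short edge at most twice. That no long edge is fully covered uses the normalisation of the inverse system: $\partial\tilde{X}_n$ consists of vertices of $X_n$ and every vertex of $X_n$ outside $\tilde{X}_n$ projects to a vertex of $X_{n-1}$, so a long edge contained in $Y$ would push its two distinct endpoints into $Y'$ as two distinct vertices of $X_{n-1}$ --- a contradiction.

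\emph{Step 2 (the two estimates).} The bound $\lengthd{d_n}{Y}\ge\frac12\abs{J}$ is then immediate: splitting $\abs{J}$ into the lengths of the isometric pieces of $g_n|_J$ and grouping by edge, each edge of $X_n$ contributes at most twice, so $\abs{J}\le 2\sum_E\lengthd{d_n}{E\cap Y}=2\lengthd{d_n}{Y}$. For the second estimate I would imitate Lemma~\ref{L:graph:admissible2}. At most two edges --- the first and the last in the traversal of $g_n|_J$ --- can fail to be covered entirely; let $P_1,P_2\subseteq Y$ be the corresponding (closed) sub-arcs, free arcs in $X_n$. By Step~1 all fully covered edges are short, so the geometric decrease gives that their total length is at most $\lengthd{d_n}{E_{j_0}}/(1-q)$, where $E_{j_0}\subseteq Y$ is the longest fully covered short edge, again a free arc. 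Hence
\begin{equation*}
 \lengthd{d_n}{Y}
 \ \le\ \lengthd{d_n}{P_1}+\lengthd{d_n}{P_2}+\frac{\lengthd{d_n}{E_{j_0}}}{1-q}
 \ \le\ \frac{1}{1-q}\Bigl(\lengthd{d_n}{P_1}+\lengthd{d_n}{P_2}+\lengthd{d_n}{E_{j_0}}\Bigr).
\end{equation*}
Applying Lemma~\ref{L:freeArc} in $(X_n,d_n)$ with the point $a_n$ to each of $P_1,P_2,E_{j_0}$ (all contained in $Y$, so that $h_n(P_1),h_n(P_2),h_n(E_{j_0})\subseteq L$) gives $\abs{L}\ge\frac12\max\bigl\{\lengthd{d_n}{P_1},\lengthd{d_n}{P_2},\lengthd{d_n}{E_{j_0}}\bigr\}$, and then (\ref{EQ:max-sum}) with $p=3$ yields $\lengthd{d_n}{Y}\le\frac{6}{1-q}\abs{L}$, which is even stronger than the asserted $\abs{L}\ge\frac{1-q}{12}\lengthd{d_n}{Y}$. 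The degenerate configurations --- no fully covered short edge, $g_n|_J$ beginning or ending at a vertex, or $Y\subseteq\tilde{X}_n$ --- are handled by the same count and only make the constant better.

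\emph{Main obstacle.} The substance of the proof is Step~1, and within it the assertion that no long edge of $X_n$ is covered entirely by $Y$: this is the one place where the hypothesis ``$Y'$ has at most one vertex of $X_{n-1}$'' is genuinely used, and it must be combined carefully with the structural properties of the inverse system (the role of $\partial\tilde{X}_n$, injectivity of $f_{n-1}$ off $\tilde{X}_n$, and the fact that vertices of $X_n$ outside $\tilde{X}_n$ arise from vertices of $X_{n-1}$). After that the remaining steps are bookkeeping with (\ref{EQ:max-sum}) and Lemma~\ref{L:freeArc}, exactly in the spirit of Lemma~\ref{L:graph:admissible2}.
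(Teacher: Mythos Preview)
Your overall strategy and the decomposition $J=J^-\cup\tilde J\cup J^+$ (with $\tilde J$ the part landing in $\tilde X_n$) coincide with the paper's. The divergence is in how you estimate the contribution of the ``long'' part: you work edge-by-edge in $X_n$ and rely on the claim that no long edge is fully covered, whereas the paper treats each of $Y_0=g_n(J^-)$ and $Y_1=g_n(J^+)$ as a \emph{single} free arc, applies Lemma~\ref{L:freeArc} to it directly, and then invokes Lemma~\ref{L:graph:admissible2} as a black box on $\tilde Y\subseteq\tilde X_n$.

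There is a genuine gap in your Step~1. You write that ``vertices of $X_n$ outside $\tilde X_n$ arise from vertices of $X_{n-1}$'', but the paper's standing hypotheses only give the converse: preimages (under $f_{n-1}$) of vertices of $X_{n-1}$ are vertices of $X_n$, and images of long edges are merely free arcs \emph{contained in} an edge of $X_{n-1}$ (possibly properly). So the endpoints of a long edge of $X_n$ need not project to vertices of $X_{n-1}$. The paper's own parenthetical remark in the proof makes exactly this point: ``it can happen that $Y_i$'s contain more than one vertex of $X_n$; however, every vertex contained in the interior of $Y_i$ has order~$2$'' --- and an interior order-$2$ vertex in the free arc $Y_i$ forces $Y_i$ to contain at least one full long edge. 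Your claim cannot simply be added as a further WLOG assumption either: ensuring it would require, for every $n$, that each $f_{n,m}(\tilde x_n)$ already be a vertex of $X_m$, which one cannot arrange with finite vertex sets fixed in advance.

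This breaks your Step~2: if $Y_0$ (say) contains full long edges, then ``all fully covered edges are short'' is false and your displayed bound on $\lengthd{d_n}{Y}$ collapses, since long edges have length $\ge\mu_{n-1}$, far exceeding any short edge. The fix is precisely what the paper does: do not split $Y_0,Y_1$ into edges; use them as free arcs, so that Lemma~\ref{L:freeArc} gives $\abs{h_n(Y_i)}\ge\frac12\lengthd{d_n}{Y_i}$, and handle $\tilde Y$ via Lemma~\ref{L:graph:admissible2}. One then combines via (\ref{EQ:max-sum}), using a dichotomy you did not isolate: either one of $J^-,J^+$ is degenerate, or $\tilde J$ coincides with an entire $K_i$ and hence $\tilde Y=\tilde X_n$, in which case the sharper bound $\abs{h_n(\tilde Y)}\ge\frac{1-q}{2}\lengthd{d_n}{\tilde Y}$ from Lemma~\ref{L:graph:admissible2} is available.
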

\begin{proof}
Put $Y'=f_{n-1}(Y)$.
If $Y\cap \tilde{X}_n$ is empty or degenerate we can proceed as in the proof of the
previous lemma, since
under this assumption $f_{n-1}|Y:Y\to Y'$ is an isometry. So assume that $Y\cap \tilde{X}_n$ is non-degenerate.
Put $J'=\varrho_{n-1}(J)$. Since $Y'=f_{n-1}(Y)$ contains at most one vertex of $X_{n-1}$,
it is either degenerate or can be written as the union of two free arcs.
Using the fact that just one point (namely the point $\tilde{x}_{n-1}$)
has non-degenerate $f_{n-1}$-preimage, we have that
$J$ and $Y$ can be written as the non-overlapping unions
$$
 J = J_0\cup \tilde{J} \cup J_1
 \qquad\text{and}\qquad
 Y = Y_0\cup \tilde{Y} \cup Y_1,
$$
where (for $i=0,1$)
\begin{itemize}
 \item $J_0\le\tilde{J}\le J_1$ are compact intervals ($J_0,J_1$ can be degenerate);
 \item $Y_i$ is either a degenerate subset of $X_n$ or a free arc in
   $X_n\setminus \interior{\tilde{X}_n}$;
 \item $g_n|_{J_i}:J_i\to Y_i$ is an isometry;
 \item $\tilde{Y}=g_n(\tilde{J})\subseteq \tilde{X}_n$ is non-degenerate;
 \item $g_n|_{\tilde{J}}:\tilde{J}\to\tilde{Y}$ is admissible.
\end{itemize}
(Notice that it can happen that $Y_i$'s contain more than one vertex of $X_n$; however, every vertex
contained in the interior of $Y_i$ has order $2$, so $g_n$ ``goes-through'' it, see the definition of an admissible path.)

For $i=0,1$ put $L_i=h_n(Y_i)$; we have
$$
 \lengthd{d}{Y_i}=\abs{J_i}
 \qquad\text{and}\qquad
 \abs{L_i}\ge\frac 12\cdot\lengthd{d}{Y_i}.
$$
For $\tilde{L}=h_n(\tilde{Y})$ Lemma~\ref{L:graph:admissible2} gives
$$
 \lengthd{d_n}{\tilde{Y}}\ge \frac 12\abs{\tilde{J}}
 \qquad\text{and}\qquad
 \abs{\tilde{L}} \ge \frac{1-q}{6} \lengthd{d_n}{\tilde{Y}}.
$$
Now the first inequality of Lemma~\ref{L:progs-gn-hn-2vert} follows since
$$
   \lengthd{d_n}{Y}
 \ge
   \lengthd{d_n}{\tilde{Y}}
   + \max_i \lengthd{d_n}{Y_i}
 \ge
   \lengthd{d_n}{\tilde{Y}}
   + \frac 12 \cdot\left(   \lengthd{d_n}{Y_1} + \lengthd{d_n}{Y_2} \right).
$$

To show the second inequality it suffices to use (\ref{EQ:max-sum}) and the fact that either
one of $J_0, J_1$ is degenerate or $\tilde{Y}=\tilde{X}_n$ and, in the latter case,
$\abs{\tilde{L}} \ge \frac{1-q}{2}\cdot \lengthd{d_n}{\tilde{Y}}$ by Lemma~\ref{L:graph:admissible2}.
\end{proof}

\begin{lemma}\label{L:progs-gn-hn-nm}
Let $J_n\subseteq I_n$ be a compact interval and let $Y_n=g_n(J_n)$, $L_n=h_n(Y_n)$.
For $1\le m< n$ put
$$
 J_m=\varrho_{n,m}(J_n),
 \qquad
 Y_m=f_{n,m}(Y_n)=g_m(J_m)
 \qquad\text{and}\qquad
 J_m=h_m(Y_m).
$$
Let $m$ be such that $Y_m$ contains at least two vertices of $X_m$.
Then
$$
 \abs{J_m} \le \abs{J_n} < \frac{1}{1-q}\cdot \abs{J_m},
 \qquad
 \lengthd{d_m}{Y_m} \le \lengthd{d_n}{Y_n} < \frac{1}{1-q}\cdot \lengthd{d_m}{Y_m}
$$
and
$$
 \frac{1-3q}{1-q}\cdot\abs{L_m} < \abs{L_n}.
$$
\end{lemma}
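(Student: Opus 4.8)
The plan is to peel off the contributions of the ``small'' graphs $\tilde X_k$ (for $m < k \le n$) one level at a time and to compare the three quantities $\abs{J_\cdot}$, $\lengthd{d_\cdot}{Y_\cdot}$ and $\abs{L_\cdot}$ at level $m$ and level $n$. The two chains of inequalities on the left are the easy part. The maps $\varrho_{n,m}$ and $f_{n,m}$ are Lipschitz-$1$ (each $\varrho_k$ and, by (\ref{EQ:fn-isometry}), each $f_k$ restricted away from $\tilde X_{k+1}$), so $\abs{J_m}\le\abs{J_n}$ and $\lengthd{d_m}{Y_m}\le\lengthd{d_n}{Y_n}$ follow at once (for the length comparison one uses (\ref{EQ:d-dn}) / Lemma~\ref{L:d-H1} together with the fact that $\pi$-projections only shrink). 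For the reverse estimates, the difference $\abs{J_n}-\abs{J_m}$ is exactly the total length $\sum_{k}\abs{K_i^{(k)}}$ of the intervals that $\varrho_{n,m}$ collapses, and by construction (\ref{EQ:def-dn-tilde})--(\ref{EQ:def-In}) each insertion at level $k$ has total length at most $2p\cdot\lengthd{d_k}{\tilde X_k} < q\mu_{k-1}$; since $Y_m$ contains at least two vertices of $X_m$, it contains a whole edge of $X_m$, hence $\lengthd{d_m}{Y_m}\ge\mu_m$, so the inserted mass is bounded by $q(\mu_{m}+\mu_{m+1}+\cdots) < \frac{q}{1-q}\mu_m \le \frac{q}{1-q}\lengthd{d_m}{Y_m}\le\frac{q}{1-q}\abs{J_m}$ after scaling (here I would use Lemma~\ref{L:progs-gn-hn-2vert} to know $\lengthd{d_m}{Y_m}\le \abs{J_m}$ up to the constant, or argue directly with the natural parametrization). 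This gives $\abs{J_n} < \frac{1}{1-q}\abs{J_m}$ and likewise for the lengths.

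For the third, harder inequality $\frac{1-3q}{1-q}\abs{L_m} < \abs{L_n}$, recall $L_k = h_k(Y_k) = d_k(Y_k, a_k)$ is an interval in $\RRR$ (the image of the connected set $Y_k$ under the $1$-Lipschitz map $h_k$). The point is that passing from level $m$ to level $n$ changes the function $h$ only by the perturbations coming from the $\tilde X_k$'s, controlled by (\ref{EQ:dn}): for any two points, $\abs{d_n(x_n,y_n) - d_m(x_m,y_m)} < q(\mu_{m}+\cdots+\mu_{n-1}) < \frac{q}{1-q}\mu_m$. I would first show $\abs{h_n(Y_n)}$ cannot be much smaller than $\abs{h_m(Y_m)}$: take $u,v\in Y_m$ realizing (almost) the full spread of $h_m$ on $Y_m$, lift them to $Y_n$, and use (\ref{EQ:dn}) applied to the pairs $(u,a)$ and $(v,a)$ to see that $h_n$ separates the lifts by at least $\abs{L_m} - \frac{2q}{1-q}\mu_m$. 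Then bound $\mu_m$ from above in terms of $\abs{L_m}$: since $Y_m$ contains at least two vertices it contains an edge, so by Lemma~\ref{L:progs-gn-hn-2vert} (or its free-arc input, Lemma~\ref{L:freeArc}) $\abs{L_m}\ge c\cdot\mu_m$ for a concrete constant; chasing the constants should collapse the bound to the stated $\frac{1-3q}{1-q}$.

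The main obstacle I anticipate is bookkeeping the constant in the third inequality: the naive estimate $\abs{L_n}\ge\abs{L_m} - \frac{2q}{1-q}\mu_m$ together with $\abs{L_m}\ge\mu_m/(\text{something})$ has to be arranged so that the ``something'' is exactly $1$, i.e.\ one needs $\mu_m\le\abs{L_m}$ directly, not up to a constant. I expect this comes from the fact that the edge $E\subseteq Y_m$ with $\lengthd{d_m}{E}\ge\mu_m$ is a \emph{free} arc on which $h_m=h_{a_m}$ has spread $\ge\frac12\lengthd{d_m}{E}$ by Lemma~\ref{L:freeArc} --- giving $\abs{L_m}\ge\mu_m/2$ --- so one actually gets $\abs{L_n} > \abs{L_m} - \frac{2q}{1-q}\cdot 2\abs{L_m} = \frac{1-q-4q}{1-q}\abs{L_m}$, and a slightly more careful split of the $\sum\mu_k$ tail (noting $\mu_{m}$ itself is \emph{not} inserted below level $m$, only $\mu_{m+1},\mu_{m+2},\dots$ are, so the tail is $q(\mu_{m}+\cdots) = q\mu_m + q\sum_{k>m}\mu_k$ but only part of this perturbs $h$ on $Y_m$) tightens this to the claimed $\frac{1-3q}{1-q}$. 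Getting that split exactly right, keeping track of which $\mu_k$ genuinely affects $d_n$ restricted to $Y_n$, is where the real care is needed.
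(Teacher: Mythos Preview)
Your plan is essentially the paper's own proof: peel off one level at a time, use that $\varrho_{n,m},f_{n,m}$ are Lipschitz-$1$ for the left inequalities, bound each insertion by $q\mu_{k-1}$ via (\ref{EQ:def-dn-tilde}) and sum using (\ref{EQ:mun-q}), then use that $Y_m$ contains an edge so $\mu_m\le\lengthd{d_m}{Y_m}\le\abs{J_m}$ (the latter simply because $g_m$ is Lipschitz-$1$, no need for Lemma~\ref{L:progs-gn-hn-2vert}). For the $L$-inequality you also proceed exactly as the paper does: lift extremizers and bound $\mu_m\le 2\abs{L_m}$ via Lemma~\ref{L:freeArc}.

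Your worry about the constant is misplaced, though. The reason you land on $\frac{1-5q}{1-q}$ rather than $\frac{1-3q}{1-q}$ is not the tail split; it is the factor $2$ you introduce in the perturbation bound. The inequality (\ref{EQ:dn}) is asymmetric: $d_{n-1}(x',y')\le d_n(x,y)$ with \emph{no} error on the lower side. So if $x',y'\in Y_{n-1}$ realize $\abs{L_{n-1}}=h_{n-1}(x')-h_{n-1}(y')$ and $x,y$ are lifts, then $h_n(x)\ge h_{n-1}(x')$ while $h_n(y)<h_{n-1}(y')+q\mu_{n-1}$, hence $\abs{L_n}>\abs{L_{n-1}}-q\mu_{n-1}$, losing $q\mu_{n-1}$ only \emph{once} per level. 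Iterating gives $\abs{L_n}>\abs{L_m}-\frac{q}{1-q}\mu_m$, and combining with $\mu_m\le 2\abs{L_m}$ yields exactly $\frac{1-3q}{1-q}\abs{L_m}$. No finer tail analysis is needed.
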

\begin{proof}
If $m=n$ there is nothing to prove; hence we may assume that $m\le n-1$.
The first and the third inequalities are immediate consequences of the fact
that $\varrho_{n,m}$ and $f_{n,m}$ are Lipschitz-$1$.
By  the definition of $\varrho_n$ we have that
$\abs{J_n} = \abs{J_{n-1}} + \sum_{i=1}^p \abs{J\cap K_i}$,
where the intervals $K_1,\dots,K_p$ are such that every restriction
$g_n|_{K_i}:K_i\to \tilde{X}_n$ is a fully-admissible map,
see (\ref{EQ:def-In}) and
(\ref{EQ:def-gn}). The simple estimate
$
 \abs{J_n\cap K_i}
 \le \abs{K_i}
 \le 2\cdot \lengthd{d_n}{\tilde{X}_n}
$
together with (\ref{EQ:def-dn-tilde}) gives that
$\abs{J_n} < \abs{J_{n-1}} + q\cdot \mu_{n-1}$.
By (\ref{EQ:fn-isometry}) we have that
$\lengthd{d_{n-1}}{Y_{n-1}}=\lengthd{d_n}{Y_n\setminus\interior{\tilde{X}_n}}$; so, again by (\ref{EQ:def-dn-tilde}),
$\lengthd{d_n}{Y_n}   <  \lengthd{d_{n-1}}{Y_{n-1}} + q\cdot \mu_{n-1}$.
Repeating the previous arguments with $n$ replaced by $n-1,n-2,\dots, m+1$ gives
that
$\abs{J_n} < \abs{J_{m}} + q\cdot (\mu_{n-1}+\mu_{n-2} + \dots + \mu_m)$
and
$\lengthd{d_n}{Y_n}  <   \lengthd{d_{m}}{Y_{m}} + q\cdot (\mu_{n-1}+\mu_{n-2} + \dots + \mu_m)$.
Now the second and the fourth inequality immediately follows from (\ref{EQ:mun-q})
and the fact that $\mu_m\le \lengthd{d_m}{Y_m}\le \abs{J_m}$
since $Y_m$ contains an edge of $X_m$.

It remains to show the fifth inequality. Let $x',y'\in Y_{n-1}$ be such
that $\abs{L_{n-1}}=d_{n-1}(a_{n-1},x')-d_{n-1}(a_{n-1},y')$.
Take any $x\in Y_n\cap f_{n-1}^{-1}(x')$, $y\in Y_n\cap f_{n-1}^{-1}(y')$. Then, by
(\ref{EQ:dn}),
$$
 \abs{L_n}
 \ge d_{n}(a_{n},x)-d_{n}(a_{n},y)
 > d_{n-1}(a_{n-1},x')  -  \left(d_{n-1}(a_{n-1},y') + q\cdot \mu_{n-1} \right)
$$
so $\abs{L_n}> \abs{L_{n-1}} - q\cdot \mu_{n-1}$.
Continue in this fashion to obtain
$$
 \abs{L_n}
 > \abs{L_m} - q\cdot (\mu_{n-1}+\mu_{n-2} + \dots + \mu_m)
 > \abs{L_m} - \frac{q}{1-q}\cdot \mu_m.
$$
Since $Y_m$ contains an edge $E$ of $X_m$, Lemma~\ref{L:freeArc} gives
that
$\abs{L_m}
 \ge \abs{h_{m}(E)}
 \ge (1/2) \cdot\lengthd{d_m}{E}$, i.e.~$\abs{L_m}\ge (1/2)\cdot \mu_m$.
So
$$
 \abs{L_n}
 > \abs{L_{m}} - \frac{2q}{1-q}\cdot \abs{L_m}
 = \frac{1-3q}{1-q}\cdot\abs{L_m}.
$$
\end{proof}

Combining Lemmas~\ref{L:progs-gn-hn-1vert}--\ref{L:progs-gn-hn-nm} gives the following estimates.

\begin{lemma}\label{L:gn-props-base}
The map $g_n:I_n\to (X_n,d_n)$ is a Lipschitz-$1$ surjection.
Moreover, for every compact interval $J\subseteq I_n$
and for $Y=g_n(J)$, $L=h_n(Y)$ it holds that
$$
  \lengthd{d_n}{Y} \ge \frac{1-q}{2}\cdot\abs{J}
  \qquad\text{and}\qquad
  \abs{L}   \ge \frac {1-4q}{12}\cdot \lengthd{d_n}{Y}.
$$
\end{lemma}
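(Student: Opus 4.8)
The plan is to handle the Lipschitz-$1$ and surjectivity claims directly from the construction, and then to obtain the two length inequalities by choosing, for a given compact interval $J\subseteq I_n$, the appropriate level $m\le n$ at which one of Lemmas~\ref{L:progs-gn-hn-1vert}--\ref{L:progs-gn-hn-2vert} (or Lemma~\ref{L:graph:admissible2}, when $m=1$) yields a clean lower bound, and then transporting that bound up to level $n$ via Lemma~\ref{L:progs-gn-hn-nm}.

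First, $g_n$ is Lipschitz-$1$: by (\ref{EQ:gn-is-parametrization}) it is a natural parametrization of a path in $(X_n,d_n)$, and since $d_n$ is convex such a parametrization is Lipschitz-$1$, exactly as observed for admissible maps in Section~\ref{SS:graph-case}. Surjectivity follows by induction on $n$. The map $g_1$ is fully-admissible, so its image meets every edge of $X_1$, hence equals $X_1$. For $n\ge 2$, the image of $g_n$ contains each $\kappa_i(K_i)=\tilde X_n$, hence contains $\tilde X_n$; moreover, by (\ref{EQ:def-gn}) and the commuting square (\ref{EQ:cd-gnk}) the composition $f_{n-1}\circ g_n$ runs over all of $X_{n-1}$ (the inductive hypothesis applied to $g_{n-1}$), so, since $f_{n-1}$ is injective off $\tilde x_{n-1}$, every point of $X_n\setminus\tilde X_n$ also lies in the image of $g_n$. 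Thus $g_n$ is onto.

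Now fix $J\subseteq I_n$ and put $Y=g_n(J)$, $L=h_n(Y)$; for $1\le m\le n$ set $J_m=\varrho_{n,m}(J)$, $Y_m=f_{n,m}(Y)=g_m(J_m)$ (by (\ref{EQ:cd-gnk})) and $L_m=h_m(Y_m)$. If $Y$ contains at most one vertex of $X_n$, Lemma~\ref{L:progs-gn-hn-1vert} immediately gives $\lengthd{d_n}{Y}\ge\tfrac12|J|\ge\tfrac{1-q}{2}|J|$ and $|L|\ge\tfrac14\lengthd{d_n}{Y}\ge\tfrac{1-4q}{12}\lengthd{d_n}{Y}$ (using $\tfrac12\ge\tfrac{1-q}2$ and $\tfrac14=\tfrac{3}{12}\ge\tfrac{1-4q}{12}$), and we are done. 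Otherwise let $m$ be the least index in $\{1,\dots,n\}$ for which $Y_m$ contains at least two vertices of $X_m$; such $m$ exists since $Y_n=Y$ does. If $m\ge 2$, minimality forces $Y_{m-1}=f_{m-1}(Y_m)$ to contain at most one vertex of $X_{m-1}$, so Lemma~\ref{L:progs-gn-hn-2vert} applies at level $m$ and yields $\lengthd{d_m}{Y_m}\ge\tfrac12|J_m|$ and $|L_m|\ge\tfrac{1-q}{12}\lengthd{d_m}{Y_m}$. If $m=1$, then $g_1|_{J_1}$ is admissible and Lemma~\ref{L:graph:admissible2} (with the vertex $a_1$, noting $h_1=h_{a_1}$) gives the same two bounds, in fact with $\tfrac{1-q}{6}$ in place of $\tfrac{1-q}{12}$.

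To conclude, since $Y_m$ contains at least two vertices of $X_m$, Lemma~\ref{L:progs-gn-hn-nm} applies and yields $|J|=|J_n|<\tfrac{1}{1-q}|J_m|$, $\lengthd{d_n}{Y}<\tfrac{1}{1-q}\lengthd{d_m}{Y_m}$ and $|L|=|L_n|>\tfrac{1-3q}{1-q}|L_m|$. Chaining with the level-$m$ estimates gives $\lengthd{d_n}{Y}\ge\lengthd{d_m}{Y_m}\ge\tfrac12|J_m|>\tfrac{1-q}{2}|J|$, and, assuming $q<\tfrac14$ (otherwise the second assertion is vacuous),
\[
 |L|>\frac{1-3q}{1-q}|L_m|\ge\frac{1-3q}{1-q}\cdot\frac{1-q}{12}\lengthd{d_m}{Y_m}=\frac{1-3q}{12}\lengthd{d_m}{Y_m}>\frac{(1-3q)(1-q)}{12}\lengthd{d_n}{Y}\ge\frac{1-4q}{12}\lengthd{d_n}{Y},
\]
the last inequality because $(1-3q)(1-q)=1-4q+3q^2\ge 1-4q$. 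The only points requiring care — none of them hard — are verifying that the minimal choice of $m$ really meets the hypotheses of Lemmas~\ref{L:progs-gn-hn-2vert} and~\ref{L:progs-gn-hn-nm} (including the separate treatment of the boundary case $m=1$ via Lemma~\ref{L:graph:admissible2}) and keeping track of which constants degrade at each step.
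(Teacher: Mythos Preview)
Your proof is correct and follows essentially the same approach as the paper: both handle the case where $Y$ has at most one vertex via Lemma~\ref{L:progs-gn-hn-1vert}, otherwise choose the minimal level $m$ where $Y_m$ contains at least two vertices, apply Lemma~\ref{L:graph:admissible2} (if $m=1$) or Lemma~\ref{L:progs-gn-hn-2vert} (if $m\ge 2$), and then transport the bounds to level $n$ using Lemma~\ref{L:progs-gn-hn-nm}, concluding with the inequality $(1-3q)(1-q)\ge 1-4q$. Your treatment of surjectivity and your explicit remark that the second inequality is vacuous for $q\ge 1/4$ are slightly more detailed than the paper's, but the argument is the same.
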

\begin{proof}
The fact that $g_n$ is a Lipschitz-$1$ surjection is
an immediate consequence of (\ref{EQ:gn-is-parametrization}).
To prove the second part of the assertion take any non-degenerate compact interval
$J\subseteq I_n$ and put $Y=g_n(J)$, $L=h_n(Y)$.
If $Y$ contains at most one vertex of $X_n$ we can use Lemma~\ref{L:progs-gn-hn-1vert}.
So assume that $Y$ contains at least two vertices of $X_n$.
As in Lemma~\ref{L:progs-gn-hn-nm}, for every $1\le m\le n$ put
$J_m=\varrho_{n,m}(J)$,
$Y_m=f_{n,m}(Y_n)=g_m(J)$ and $J_m=h_m(Y)$.
Let $m\ge 1$ be the smallest integer such that $Y_m$ contains at least two vertices of $X_m$.
Using Lemma~\ref{L:graph:admissible2} (if $m=1$)
or Lemma~\ref{L:progs-gn-hn-2vert} (if $m\ge 2$) we obtain that
$\lengthd{d_m}{Y_m}\ge \frac{1}{2}\cdot \abs{J_m}$
and $\abs{L_m}\ge \frac{1-q}{12}\cdot \lengthd{d_m}{Y_m}$.
Now Lemma~\ref{L:progs-gn-hn-nm} gives
$$
 \frac{\lengthd{d_n}{Y}}{\abs{J}}
 \ge
 (1-q) \cdot \frac{\lengthd{d_m}{Y_m}}{\abs{J_m}}
 \ge
 \frac{1-q}{2}
$$
and
$$
 \frac{\abs{L}}{\lengthd{d_n}{Y}}
 \ge
 (1-3q) \cdot \frac{\abs{L_m}}{\lengthd{d_m}{Y_m}}
 \ge
 (1-3q)\cdot\frac{1-q}{12}.
$$
The desired inequalities follow.
\end{proof}

\subsection{Properties of the maps $g$, $h$}\label{SS:proof-props-g-h}
Recall that $g:I_\infty\to X$ is given by
$g(s_1,s_2,\dots)=(g_1(s_1),g_2(s_2),\dots)$.

\begin{lemma}\label{L:g-props-base}
The map $g:(I_\infty,d')\to (X,d)$ is a Lipschitz-$1$ surjection.
\end{lemma}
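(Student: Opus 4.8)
The plan is to read off both assertions directly from the inverse-limit description of $g$ together with the properties of the maps $g_n$ established in Lemma~\ref{L:gn-props-base}. Since the diagrams (\ref{EQ:cd-gnk}) commute, Theorem~\ref{T:inducedMap} already guarantees that $g=\invlim g_n:I_\infty\to X$ is a well-defined continuous map acting coordinate-wise, $g(s_1,s_2,\dots)=(g_1(s_1),g_2(s_2),\dots)$, as recorded in (\ref{EQ:g-def}); so only surjectivity and the Lipschitz-$1$ bound remain to be checked.

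For surjectivity I would simply invoke Lemma~\ref{L:gn-props-base}, by which each $g_n:I_n\to X_n$ is surjective, and then apply the final clause of Theorem~\ref{T:inducedMap}, which passes surjectivity of the $g_n$ to the induced map $g$.

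For the Lipschitz estimate, fix $s=(s_n)_n$ and $t=(t_n)_n$ in $I_\infty$. Each $g_n$ is Lipschitz-$1$ with respect to $d_n$ (by Lemma~\ref{L:gn-props-base}, or directly from (\ref{EQ:gn-is-parametrization}), since $g_n$ is a natural parametrization of a path in the convex metric $d_n$), so $d_n(g_n(s_n),g_n(t_n))\le\abs{s_n-t_n}$ for every $n$. Taking the supremum over $n$ and using the definitions (\ref{EQ:def-d}) of $d$ and (\ref{EQ:d'-def}) of $d'$ gives
$$
 d(g(s),g(t))=\sup_n d_n(g_n(s_n),g_n(t_n))\le\sup_n\abs{s_n-t_n}=d'(s,t),
$$
which is exactly the claim.

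I do not anticipate a genuine obstacle: the statement is essentially bookkeeping, the one point to be careful about being that the metrics $d$ and $d'$ are defined by the \emph{same} kind of coordinate-wise supremum, so that the coordinate-wise action of $g$ interacts with them in the obvious way. That the Lipschitz constant is exactly $1$ (and not some larger number) is inherited from the construction of the $g_n$ as piecewise isometries onto free arcs.
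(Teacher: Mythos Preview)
Your proof is correct and essentially identical to the paper's own argument: surjectivity is obtained from Theorem~\ref{T:inducedMap} using that every $g_n$ is onto, and the Lipschitz-$1$ bound is the coordinate-wise estimate $d(g(s),g(t))=\sup_n d_n(g_n(s_n),g_n(t_n))\le\sup_n\abs{s_n-t_n}=d'(s,t)$.
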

\begin{proof}
By the definitions of $d,d'$ and the fact that
the maps $g_n$ are Lipschitz-$1$ we have that for every $s=(s_n)_n$, $t=(t_n)_n\in I_\infty$
$$
 d(g(s),g(t))
 = \sup_n d_n(g_n(s_n),g_n(t_n))
 \le \sup_n \abs{s_n - t_n}
 = d'(s,t).
$$
So $g$ is Lipschitz-$1$. The surjectivity of $g$ follows from the surjectivity
of the maps $g_n$ by Theorem~\ref{T:inducedMap}.
\end{proof}

\begin{lemma}\label{L:g-props-length}
Let $J$ be a subcontinuum of $I_\infty$ and let $Y=g(J)$, $L=h(Y)$. Then
$$
  \lengthd{d}{Y} \ge \frac{1-q}{2}\cdot\lengthd{d'}{J}
  \qquad\text{and}\qquad
  \abs{L}   \ge \frac {1-4q}{12}\cdot \lengthd{d}{Y}.
$$
\end{lemma}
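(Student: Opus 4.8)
\emph{Proof plan.} The idea is to bootstrap the finite-level estimates of Lemma~\ref{L:gn-props-base} to the inverse limit, using Lemma~\ref{L:d-H1} and Lemma~\ref{L:Iinfty-props}(e) to pass lengths through the limit. Fix a subcontinuum $J$ of $I_\infty$ (the degenerate case being trivial, as then $Y$ is a point and all three quantities vanish). For $n\in\NNN$ set $J_n=\pi_n'(J)$, a subcontinuum of the arc $I_n$ and hence a compact subinterval of $I_n$. The commuting diagram~(\ref{EQ:cd-g}) gives $\pi_n(Y)=\pi_n(g(J))=g_n(\pi_n'(J))=g_n(J_n)$; write $Y_n=g_n(J_n)=\pi_n(Y)$ and $L_n=h_n(Y_n)$. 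Since $Y=g(J)$ is a continuous image of the continuum $J$, it is a closed subset of $X$, so Lemma~\ref{L:d-H1} gives $\lengthd{d}{Y}=\sup_n\lengthd{d_n}{Y_n}=\lim_n\lengthd{d_n}{Y_n}$, while Lemma~\ref{L:Iinfty-props}(e) gives $\lengthd{d'}{J}=\sup_n\abs{J_n}=\lim_n\abs{J_n}$.

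The first inequality is then immediate: by Lemma~\ref{L:gn-props-base}, $\lengthd{d}{Y}\ge\lengthd{d_n}{Y_n}\ge\frac{1-q}{2}\abs{J_n}$ for every $n$, and taking the supremum over $n$ yields $\lengthd{d}{Y}\ge\frac{1-q}{2}\lengthd{d'}{J}$.

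For the second inequality I would compare the compact intervals $h(Y)=[m,M]$ and $L_n=h_n(Y_n)=[m_n,M_n]$ in $\RRR$, so that $\abs{L}=M-m$ and $\abs{L_n}=M_n-m_n$. For $x\in Y$ with $x_n=\pi_n(x)$, inequality~(\ref{EQ:d-dn}) gives $d_n(a_n,x_n)\le d(a,x)\le d_n(a_n,x_n)+q^n$. Applying the left bound to a point of $Y$ realizing $M$ shows $M\ge M_n$; applying the right bound to a point of $Y$ lying over a point of $Y_n$ realizing $m_n$ (such a point exists since $\pi_n(Y)=Y_n$) shows $m\le m_n+q^n$. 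Hence $\abs{L}=M-m\ge M_n-m_n-q^n=\abs{L_n}-q^n$. Combining this with Lemma~\ref{L:gn-props-base}, for every $n$ we get $\abs{L}\ge\abs{L_n}-q^n\ge\frac{1-4q}{12}\lengthd{d_n}{Y_n}-q^n$, and letting $n\to\infty$ (using $q<1$ and $\lengthd{d_n}{Y_n}\to\lengthd{d}{Y}$) yields $\abs{L}\ge\frac{1-4q}{12}\lengthd{d}{Y}$.

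I expect the only delicate step to be the radial estimate $\abs{L}\ge\abs{L_n}-q^n$, i.e.\ the bookkeeping showing that each endpoint of $h(Y)$ lies within $q^n$ of the corresponding endpoint of $h_n(Y_n)$; this is an elementary but slightly fiddly use of~(\ref{EQ:d-dn}). Everything else is a direct assembly of Lemmas~\ref{L:d-H1}, \ref{L:Iinfty-props}(e) and \ref{L:gn-props-base}.
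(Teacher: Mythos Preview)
Your proposal is correct and follows essentially the same route as the paper: pass the finite-level estimates of Lemma~\ref{L:gn-props-base} through the limit via Lemmas~\ref{L:d-H1} and~\ref{L:Iinfty-props}(e), and handle $\abs{L}$ using~(\ref{EQ:d-dn}). One small slip: to get $M\ge M_n$ you should lift a point of $Y_n$ realizing $M_n$ to $Y$ (exactly as you do for $m\le m_n+q^n$), not start from a point of $Y$ realizing $M$; the conclusion is of course still valid.
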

\begin{proof}
For every $n$ put $J_n=\pi_n'(J)$, $Y_n=\pi_n(Y)$ and $L_n=h_n(Y_n)$.
By Lemmas~\ref{L:d-H1} and \ref{L:Iinfty-props} we have
$$
 \lengthd{d'}{J} = \lim_{n\to\infty} \abs{J_n}
 \qquad\text{and}\qquad
 \lengthd{d}{Y} = \lim_{n\to\infty} \lengthd{d_n}{Y_n}.
$$
So the first inequality immediately follows from Lemma~\ref{L:gn-props-base}.
If we prove that
\begin{equation}\label{EQ:L:g-props-length:1}
 \abs{L} = \lim_{n\to\infty} \abs{L_n}
\end{equation}
then we analogously obtain the second inequality. (In fact, just an inequality
in (\ref{EQ:L:g-props-length:1}) is sufficient.)
But (\ref{EQ:L:g-props-length:1}) is an immediate consequence of the definitions of $h$ and $d$.
Indeed,
$$
 \abs{L} = \sup_{y,y'\in Y} d(a,y)-d(a,y'),
 \qquad
 \abs{L_n} = \sup_{y,y'\in Y} d_n(a_n,\pi_n(y))-d_n(a_n,\pi_n(y')),
$$
so (\ref{EQ:d-dn}) immediately gives (\ref{EQ:L:g-props-length:1}).
\end{proof}

\subsection{Summarization}\label{SS:proof-main-thm}
Here we prove Lemma~\ref{P:props-g-h}, which is the main result of
Section~\ref{S:proofOfMainThms}.
We start with some simple observations concerning cut points.

\begin{lemma}\label{L:uncountable-cut} Let $X=\invlim (X_n,f_n)$ be the monotone
inverse limit of continua and let $\pi_n:X\to X_n$ ($n\in\NNN$) be the natural projections.
Take any two points $x,y\in X$ and put $x_n=\pi_n(x)$, $y_n=\pi_n(y)$ ($n\in\NNN$).
Then
\begin{enumerate}
    \item[(a)] $\pi_n \Cut_X(x,y)\subseteq  \Cut_{X_n}(x_n,y_n)$ for every $n\in\NNN$;
    \item[(b)] if $\Cut_X(x,y)$ is uncountable then
     $\Cut_{X_n}(x_n,y_n)$ is uncountable for every sufficiently large $n$;
    \item[(c)] if $\Cut_X(x,y)$ is countable and $X$ is rational
         then every $\Cut_{X_n}(x_n,y_n)$ is countable.
\end{enumerate}
\end{lemma}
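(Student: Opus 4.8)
\emph{Plan.} All three statements will be deduced from one elementary fact about the projections. Since the bonding maps $f_n$ are monotone, each $\pi_n$ is monotone \cite{Macias}; being moreover a closed map (a continuous surjection between compacta), $\pi_n$ satisfies that $\pi_n^{-1}(C)$ is connected for every connected $C\subseteq X_n$, so in particular every fibre $\pi_n^{-1}(w)$ is a subcontinuum of $X$. For (a) I fix $n$ and $z\in\Cut_X(x,y)$, set $w=\pi_n(z)$, and argue by contradiction: if $x_n$ and $y_n$ were to lie in a single component $K$ of $X_n\setminus\{w\}$, then $\pi_n^{-1}(K)$ would be a connected subset of $X$ missing $\pi_n^{-1}(w)\ni z$ and containing both $x$ and $y$, so $x,y$ would lie in one component of $X\setminus\{z\}$, contrary to $z\in\Cut_X(x,y)$; hence $w\in\Cut_{X_n}(x_n,y_n)$. (The borderline cases where $\pi_n(z)$ coincides with $x_n$ or $y_n$ are treated separately and do not occur in the applications, where $a_n,b_n$ are distinct vertices with one-point fibres.) This gives $\pi_n\Cut_X(x,y)\subseteq\Cut_{X_n}(x_n,y_n)$.

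For (c) I prove the contrapositive: assuming $X$ rational and $\Cut_{X_n}(x_n,y_n)$ uncountable for some $n$, I show $\Cut_X(x,y)$ is uncountable. If $w\in\Cut_{X_n}(x_n,y_n)$ has a one-point fibre $\pi_n^{-1}(w)=\{z_w\}$, then from $X\setminus\{z_w\}=\pi_n^{-1}(X_n\setminus\{w\})$ and the fact that the two sides of $w$ are relatively open in $X_n\setminus\{w\}$ one reads off at once that $z_w\in\Cut_X(x,y)$; as $w\mapsto z_w$ is injective, only countably many $w\in\Cut_{X_n}(x_n,y_n)$ can have a one-point fibre once $\Cut_X(x,y)$ is assumed countable. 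So uncountably many $w$ have a non-degenerate fibre, yielding an uncountable family of pairwise disjoint non-degenerate subcontinua of $X$; restricting to an uncountable subfamily with diameters $\ge\eta>0$ and choosing one point from each, a condensation point $p$ of these points has every neighbourhood $U$ of diameter $<\eta$ meeting uncountably many of the subcontinua, each of which, being connected and too large to fit in $U$, hits $\partial U$. Then $\partial U$ is uncountable for all small $U$, contradicting rationality of $X$ at $p$; hence $\Cut_X(x,y)$ is uncountable and (c) follows.

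For (b) the starting point is again (a), which gives $\Cut_X(x,y)\subseteq\pi_n^{-1}(\Cut_{X_n}(x_n,y_n))$ for all $n$; the difficulty, and the real obstacle of the lemma, is that $\pi_n$ could a priori collapse an uncountable set of cut points onto a countable one. To rule this out I would use the natural linear order $\prec$ on $\Cut_X(x,y)$ ($z\prec z'$ iff $z'$ lies in the $y$-component of $X\setminus\{z\}$), together with the observation that any subcontinuum containing two cut points $z\prec z'$ contains every cut point $\prec$-between them (such a point separates $z$ from $z'$, hence lies in every connected set joining them): consequently each fibre $\pi_n^{-1}(w)$ meets $\Cut_X(x,y)$ in at most one point unless it swallows a whole $\prec$-order interval, and since the fibres are disjoint and (in the applications $X$ is locally connected, so) $\Cut_X(x,y)$ embeds order-isomorphically into an arc, there are only countably many such ``fat'' fibres at any level. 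The remaining and most delicate step is to check that these countably many fat intervals cannot absorb all of the uncountability simultaneously at every level: because the fibres refine to singletons as $n\to\infty$, no fixed uncountable $\prec$-interval stays inside a single fibre forever, so for $n$ large enough $\pi_n(\Cut_X(x,y))$ is uncountable, and then $\Cut_{X_n}(x_n,y_n)$ is uncountable by (a).
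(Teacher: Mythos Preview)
Your arguments for (a) and (c) are correct and essentially match the paper's. For (a) you run the same contrapositive via monotonicity of $\pi_n$; you are right to flag the borderline case $\pi_n(z)\in\{x_n,y_n\}$, which the paper's proof silently ignores (and at which the inclusion in (a) can in fact fail, though this is harmless for (b) and (c)). For (c) your condensation-point argument is an explicit unpacking of the theorem the paper quotes from Kuratowski (in a rational continuum every family of pairwise disjoint non-degenerate subcontinua is countable), so the two proofs coincide.

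The genuine gap is in (b). You correctly isolate the difficulty but do not resolve it: the last sentence (``no fixed uncountable $\prec$-interval stays inside a single fibre forever, so for $n$ large enough $\pi_n(\Cut_X(x,y))$ is uncountable'') is a non sequitur. That each individual fibre eventually shrinks to a point does not prevent a \emph{countable} collection of fibres from covering all of $C=\Cut_X(x,y)$ at every level, exactly as the dyadic partitions cover a Cantor set by finitely many intervals at each stage. The paper's own proof of (b) is different and much shorter (pigeonhole on the least index $n_{z,z'}$ with $\pi_{n_{z,z'}}(z)\ne\pi_{n_{z,z'}}(z')$), but contains an analogous error: having uncountably many \emph{pairs} $(z,z')$ with $\pi_{n_0}(z)\ne\pi_{n_0}(z')$ does not give uncountably many \emph{values} of $\pi_{n_0}$ on $C$.

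Indeed (b) is false as stated. Let $K\subseteq[0,1]$ be the middle-thirds Cantor set and let $X$ be the ``necklace'' obtained by attaching, for each complementary interval $(a,b)$ of $K$, a circle of diameter $b-a$ through $a$ and $b$; this $X$ is totally regular and $\Cut_X(0,1)=K\setminus\{0,1\}$ is uncountable. Enumerate the complementary intervals and let $X_n$ be the quotient of $X$ collapsing to a point each component of $X$ minus the first $n$ open circles. Then each $X_n$ is a chain of $n$ circles, the bonding map $X_{n+1}\to X_n$ is monotone with a single non-degenerate fibre (a circle), and $X=\invlim X_n$; yet $\Cut_{X_n}(x_n,y_n)$ is the finite set of interior junction points. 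So the ``most delicate step'' you flag is an actual obstruction, not a missing detail, and the paper's use of (b) would require restricting the class of inverse systems beyond what the lemma assumes.
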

\begin{proof}
Put $C=\Cut_X(x,y)$ and $C_n=\Cut_{X_n}(x_n,y_n)$ for $n\in \NNN$.

(a) If $z\in X$ is such that $z_n=\pi_n z\not\in C_n$ then
there is a connected subset $D_n$ of $X_n\setminus \{z_n\}$ containing both
$x_n,y_n$. Then $D=\pi_n^{-1}(D_n)$ is a connected subset of $X\setminus\{z\}$
containing both $x,y$, hence $z\not\in C$.

(b) Assume that $C$ is uncountable.
For every $z\ne z'$ from $X$ there is $n_{z,z'}\in \NNN$
such that $\pi_n z \ne \pi_n z'$ for every $n\ge n_{z,z'}$.
Since $C$ is uncountable there is $n_0$ such that
$n_{z,z'}=n_0$ for uncountably many pairs of distinct points $z,z'$ from $C$.
Hence, by (a), $C_n\supseteq \pi_n C$ is uncountable for every $n\ge n_0$.

(c) If $C_n$ is uncountable for some $n$ then $D=\pi_n^{-1}(C_n)$ has uncountably
many components and every component of $D$ separates $x,y$. Since $X$
is rational, only countably many components of
$D$ are non-degenerate
(see \cite[Theorem~51.IV.5]{Kur2})
and so uncountably many components of $D$ are singletons. Thus
uncountably many points of $D$ separate $x,y$, i.e.~$C$ is
uncountable.
\end{proof}

\begin{lemma}\label{P:props-g-h}
There are constants $0<\gamma<\Gamma$ such that for any $\delta>0$ the following hold:
For any non-degenerate totally regular continuum $X$ and any two points $a,b$ of $X$
there are a compatible convex metric $d$ on $X$ and maps
$g:[0,\alpha]\to X$, $h:X\to [0,\beta]$ with the following properties:
\begin{enumerate}
  \item[(a)] $g(0)=a$, $g(\alpha)=b$ and $h(a)=0$;
  \item[(b)] $g,h$ are Lipschitz-$1$ surjections;
    \item[(c)] $\gamma\cdot \abs{J} \le \lengthd{d}{g(J)}\le \Gamma\cdot\abs{h\circ g(J)}$
     for every closed subinterval $J$ of $[0,\alpha]$;
  \item[(d)] $\lengthd{d}{X} \in[1-\delta,1]$,
    $\lengthd{d}{X}\le \alpha \le 2\cdot \lengthd{d}{X}$
    and $(1/2 -\delta)\cdot\lengthd{d}{X}\le \beta \le \lengthd{d}{X}$.
 \end{enumerate}
 Moreover, if $\Cut(a,b)$ is uncountable then a metric $d$ and maps $g,h$
 can be chosen such that also:
 \begin{enumerate}
    \item[(e)] $h(b)=\beta$;
    \item[(f)] $d(a,b)>(1-\delta)\cdot \lengthd{d}{X}$.
 \end{enumerate}
\end{lemma}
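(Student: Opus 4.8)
The plan is to run the construction of Section~\ref{S:proofOfMainThms} once, with the auxiliary parameter $q$ chosen small enough as a function of $\delta$, and then to read off (a)--(f) from the lemmas already proved there. Fix once and for all $\gamma=1/4$ and $\Gamma=24$ (any values with $\gamma<1/2<12<\Gamma$ will do). Given $\delta>0$, pick $q=q(\delta)\in(0,\min\{1/8,\delta\})$; how small $q$ must be will be clear below. Applying the construction to $X$, $a$, $b$ with this $q$ produces: the compatible convex metric $d=\sup_n d_n$ on $X$ (Lemma~\ref{L:metric-d}); the arc $(I_\infty,d')$, which by Lemma~\ref{L:Iinfty-props} is isometric via $\eta$ to $[0,\alpha]$ with $\alpha=\lim_n\alpha_n<\infty$; the Lipschitz-$1$ surjection $g=\invlim g_n\colon(I_\infty,d')\to(X,d)$ of Lemma~\ref{L:g-props-base}; and the map $h=h_a\colon X\to[0,\beta]$, where $\beta:=\max_{x\in X}d(a,x)$. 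Transporting $g$ to $[0,\alpha]$ through the isometry $\eta$, and keeping the name $g$, we obtain the required data, with $\gamma,\Gamma$ as above.

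Properties (a)--(c) are then essentially immediate. For (a): $g(0)=g(0,0,\dots)=a$, $g(\alpha)=g(\alpha_1,\alpha_2,\dots)=b$, $h(a)=d(a,a)=0$, and $h(X)$ is a subinterval of $[0,\infty)$ containing $0$, hence equals $[0,\beta]$. For (b): $g$ is a Lipschitz-$1$ surjection by Lemma~\ref{L:g-props-base} (and $\eta$ is an isometry), while $h$ satisfies $|h(x)-h(y)|=|d(a,x)-d(a,y)|\le d(x,y)$ and is onto $[0,\beta]$. For (c): given a closed subinterval $J\subseteq[0,\alpha]$, its preimage $\eta^{-1}(J)$ is a subcontinuum of $I_\infty$ with $\lengthd{d'}{\eta^{-1}(J)}=|J|$, so Lemma~\ref{L:g-props-length} gives $\lengthd{d}{g(J)}\ge\frac{1-q}{2}|J|\ge\gamma|J|$ and $\lengthd{d}{g(J)}\le\frac{12}{1-4q}|h\circ g(J)|\le\Gamma|h\circ g(J)|$, the last two inequalities holding because $q\le 1/8$.

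For (d): $\lengthd{d}{X}\le 1$ is the last assertion of Lemma~\ref{L:d-H1}, and $\lengthd{d}{X}=\sup_n\lengthd{d_n}{X_n}\ge\lengthd{d_1}{X_1}=1-q\ge 1-\delta$ by (\ref{EQ:metric-d1}); thus $\lengthd{d}{X}\in[1-\delta,1]$. Since $g$ is a Lipschitz-$1$ surjection, $\lengthd{d}{X}=\lengthd{d}{g([0,\alpha])}\le\alpha$. For the reverse estimate one tracks the construction: a fully-admissible path traverses every edge of a graph at most twice, so $\alpha_1\le 2\lengthd{d_1}{X_1}$, while each increment $\alpha_n-\alpha_{n-1}$ is a sum of lengths of admissible walks inside $\tilde X_n$ and hence, splicing those walks into $g_n$ economically (one genuine traversal, the remaining ones short geodesics) and using (\ref{EQ:def-dn-tilde}), is dominated by $q\mu_{n-1}$; summing and invoking (\ref{EQ:mun-q2}) yields $\alpha\le 2\lengthd{d}{X}$ once $q$ is small. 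Finally $\beta=\max_X d(a,\cdot)\le\lengthd{d}{X}$, because $d(a,x)$ is the length of a geodesic from $a$ to $x$; and $\beta\ge\max_{x_1\in X_1}d_1(a_1,x_1)=|h_{a_1}(X_1)|\ge\frac{1-q}{2}\lengthd{d_1}{X_1}=\frac{(1-q)^2}{2}\ge(1/2-\delta)\lengthd{d}{X}$ by the final assertion of Lemma~\ref{L:graph:admissible2} (for $q$ small).

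It remains to treat (e), (f) when $\Cut(a,b)$ is uncountable. By Lemma~\ref{L:uncountable-cut}(b), after truncating the inverse sequence (which leaves $X$ and all of the above estimates unaffected) we may assume $\Cut_{X_1}(a_1,b_1)$ is uncountable; in the graph $X_1$ this set is a finite union of open edges and vertices, so it contains a non-degenerate arc $A_1$, which after subdividing is a union of edges through which every arc from $a_1$ to $b_1$ passes. Run the construction with two extra provisions: (i) in Lemma~\ref{L:graph:admissible2} applied to $X_1$ we order the edges so that those of $A_1$ come first, whence $\lengthd{d_1}{A_1}\ge\lengthd{d_1}{E_0}\ge(1-q)\lengthd{d_1}{X_1}$; and (ii) at every stage we order the edges of the graphs and choose $\lengthd{\tilde d_n}{\tilde X_n}$ small enough (still respecting (\ref{EQ:def-dn-tilde})) that each ``dangling'' subcontinuum has length strictly less than the distance from its point of attachment to $b$ --- concretely, shrinking $\lengthd{\tilde d_n}{\tilde X_n}$ below $d_{n-1}(\tilde x_{n-1},b_{n-1})$ when this is positive, and, when $\tilde x_{n-1}=b_{n-1}$, ordering the edges of $\tilde X_n$ so that the geodesic ending at $b_n$ carries almost all of its length. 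Provision (i) gives $d(a,b)\ge d_1(a_1,b_1)\ge\lengthd{d_1}{A_1}\ge(1-q)^2>(1-\delta)\lengthd{d}{X}$, which is (f). Provision (ii) forces, for every $z\in X$ off the geodesic from $a$ to $b$, $d(a,z)=d(a,w)+d(w,z)<d(a,w)+d(w,b)=d(a,b)$ with $w$ the attachment point, while $d(a,z)\le d(a,b)$ trivially for $z$ on that geodesic; hence $\beta=d(a,b)=h(b)$, which is (e). None of (a)--(d) is disturbed, as the extra provisions only decrease the auxiliary lengths. The genuine work in the proof is the estimate $\alpha\le 2\lengthd{d}{X}$ in (d) --- one must splice the walks $\kappa_i$ into $g_n$ sparingly and exploit the rapid decay built into (\ref{EQ:def-dn-tilde}) --- and, in (e), the inductive control of the blow-ups accumulating near $b$, which is the point where provision (ii) has to be made quantitatively precise.
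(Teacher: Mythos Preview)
Your treatment of (a)--(c), (f), and most of (d) follows the paper's route and is fine. One wrinkle in (d): the parenthetical about ``one genuine traversal, the remaining ones short geodesics'' suggests modifying the construction, which would break Lemma~\ref{L:progs-gn-hn-2vert}; fortunately no modification is needed, since $\alpha_n-\alpha_{n-1}\le 2p\,\lengthd{\tilde d_n}{\tilde X_n}<q\mu_{n-1}$ follows directly from (\ref{EQ:def-dn-tilde}). Even so, summing only gives $\alpha\le 2(1-q)+q=2-q$, while $2\lengthd{d}{X}$ can be as small as $2(1-q)$, so you have not actually established $\alpha\le 2\lengthd{d}{X}$ exactly. (The paper glosses over the same point; what matters downstream is only $\alpha\le(2+\eps)\lengthd{d}{X}$.)

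The genuine gap is your argument for (e). You attempt to force $b$ to be the point of $X$ farthest from $a$ by shrinking blow-ups and ordering edges, but this cannot be achieved in general. Take $X_1$ to be a $Y$-tree with center $b_1$, one leg ending at $a_1$, and two other legs; then $\Cut_{X_1}(a_1,b_1)$ is the open leg from $a_1$ to $b_1$ (uncountable), yet no convex metric on $X_1$ makes the branch point $b_1$ the farthest vertex from the leaf $a_1$. Your provision (ii) only bounds $\lengthd{\tilde d_n}{\tilde X_n}$ below $d_{n-1}(\tilde x_{n-1},b_{n-1})$, which is useless when $d_{n-1}(a_{n-1},\tilde x_{n-1})>d_{n-1}(a_{n-1},b_{n-1})$ already; and your concluding sentence ``$d(a,z)=d(a,w)+d(w,z)<d(a,w)+d(w,b)=d(a,b)$'' presupposes a unique $a$--$b$ geodesic and a tree-like attachment structure that need not exist.

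The paper sidesteps this entirely. It keeps $h(x)=d(a,x)$, uses (f) to guarantee $\tilde\beta:=d(a,b)>\beta/2$, and then \emph{post-composes} $h$ with the fold $\lambda:[0,\beta]\to[0,\tilde\beta]$, $\lambda(s)=\min\{s,\,2\tilde\beta-s\}$. Then $\tilde h=\lambda\circ h$ is still Lipschitz-$1$ onto $[0,\tilde\beta]$, satisfies $\tilde h(b)=\tilde\beta$, and obeys $\abs{\tilde h(Y)}\ge\tfrac12\abs{h(Y)}$ for every $Y$, so (c) survives after absorbing a factor $2$ into $\Gamma$. This device is the missing idea in your proof of (e).
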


\begin{proof}
Let $0<\gamma<\frac 12$, $\Gamma>24$ and $0<\delta\le \frac 12$. Take $0<q<1$ such that
$$
 \frac{1-q}{2} \ge\gamma,
 \qquad
 \frac{1-4q}{12} \ge \frac{2}{\Gamma}
 \qquad\text{and}\qquad
 2q<\delta.
$$
For a non-degenerate totally regular continuum $X$ and $a,b\in X$
construct a compatible convex metric $d$ on $X$ and maps $g:[0,\alpha]\to X$,
$h:X\to [0,\beta]$ as in Section~\ref{SS:construction};
particularly, $h(x)=d(a,x)$ for $x\in X$ and $\beta=\max_{x\in X} d(a,x)$.
By Lemma~\ref{L:g-props-base}, $g$ is a Lipschitz-$1$ surjection; since $h$ is such trivially,
we have (b). The property (a) is also immediate: $h(a)=d(a,a)=0$ and
$g_n(0)=a_n$, $g_n(\alpha_n)=b_n$ for every $n$ by (\ref{EQ:gn-is-parametrization}),
so $g(0)=a$ and $g(\alpha)=b$. The property (c) follows from Lemma~\ref{L:g-props-length} and
the choice of $q$.

To finish the proof of the first part we have to show (d).
The inequalities $1-\delta\le \lengthd{d}{X} \le 1$ follow from (\ref{EQ:metric-d1})
and Lemma~\ref{L:d-H1}.
Since $g,h$ are Lipschitz-$1$ surjections
we immediately have $\beta\le \lengthd{d}{X}\le\alpha$.
Lemma~\ref{L:graph:admissible2} applied to $J=[0,\alpha_n]$, $\kappa=g_n$ and $a=a_n$
gives that
$$
  \alpha_n\le 2\cdot \lengthd{d_n}{X_n}
  \qquad\text{and}\qquad
  \abs{h_n(X_n)}
  \ge \frac{1-q}{2}\cdot \lengthd{d_n}{X_n}.
$$
Thus, by Lemmas~\ref{L:Iinfty-props} and \ref{L:d-H1},
$$
 \alpha
 =\lim_{n\to\infty} \alpha_n
 \le 2\cdot \lim_{n\to\infty} \lengthd{d_n}{X_n}
 =2\cdot \lengthd{d}{X}
$$
and
$$
 \beta
 =\lim_{n\to\infty} \abs{h_n(X_n)}
 \ge
 \frac{1-q}{2}\cdot \lim_{n\to\infty}\lengthd{d_n}{X_n}
 = \frac{1-q}{2}\cdot \lengthd{d}{X}
 > \left(\frac 12 -\delta\right) \cdot \lengthd{d}{X}.
$$

Now assume that $a,b$ are such that the set $\Cut(a,b)$ of points
which separate them is uncountable (hence $a\ne b$). By Lemma~\ref{L:uncountable-cut}
for every sufficiently large $n$ the set $\Cut_{X_n}(a_n,b_n)$ of points in $X_n$ separating $a_n,b_n$ is uncountable; without loss of generality we may assume that $\Cut_{X_1}(a_1,b_1)$
is uncountable. Hence there is a free arc $A$ in $X_1$ such that
$X_1\setminus\interior{A}$ has exactly two components, one of which contains $a_1$
and the other one contains $b_1$. We may assume (adding two vertices of order $2$ if necessary) that $A$ is an edge of $X_1$. We modify the construction
of the metric $d_1$ on $X_1$ such that the longest edge is $E_0=A$
(see the proof of Lemma~\ref{L:graph:admissible2}).
Then
$$
 d_1(a_1,b_1)
 \ge \lengthd{d_1}{A}
 \ge (1-q)\cdot \lengthd{d_1}{X_1}.
$$
Lemma~\ref{L:d-H1} and (\ref{EQ:fn-isometry}), (\ref{EQ:def-dn-tilde}), (\ref{EQ:mu-n}) give
\begin{equation*}
\begin{split}
 \lengthd{d}{X}
 &= \lim_{n\to\infty} \lengthd{d_n}{X_n}
 = \lim_{n\to\infty} \left[
     \lengthd{d_1}{{X}_1} +
     \lengthd{d_2}{\tilde{X}_2} + \dots +
     \lengthd{d_n}{\tilde{X}_n}
   \right]
\\
 &\le \frac{1}{1-q}\cdot \lengthd{d_1}{{X}_1}.
\end{split}
\end{equation*}
So, by (\ref{EQ:d-dn}),
$$
 d(a,b)
 \ge d_1(a_1,b_1)
 \ge (1-q)^2 \cdot \lengthd{d}{X}
 > (1-2q) \cdot \lengthd{d}{X}
$$
and (f) is satisfied.

To obtain also (e) we must replace $h$ by
$$
 \tilde{h}=\lambda\circ h:X\to[0,\tilde{\beta}]
 \qquad
 \text{where } \tilde{\beta}=d(a,b) \text{ and }
 \lambda(s) = \begin{cases}
  s  &\text{if } s\le \tilde{\beta};
  \\
  2\tilde{\beta}-s   &\text{if } \tilde{\beta}< s\le {\beta}.
 \end{cases}
$$
Notice that $\tilde{\beta}=d(a,b)>(1-\delta)\cdot\lengthd{d}{X}
\ge (1-\delta)\cdot d(X)\ge \frac 12 \beta$, so $\lambda(s)\in[0,\tilde\beta]$
for every $s\in[0,\beta]$. Notice also that
$\tilde\beta > (\frac 12-\delta)\cdot\lengthd{d}{X}$; thus to
prove that the triple $d,g,\tilde{h}$ satisfies (a)--(f) we only need to show that $\lengthd{d}{g(J)}\le \Gamma\cdot
\abs{\tilde{h}\circ g(J)}$ for every closed subinterval $J$ of $[0,\alpha]$, since
the other properties are satisfied trivially.
To this end fix a closed subinterval $J\subseteq [0,\alpha]$ and put $Y=g(J)$.
Using
$$
 \abs{\tilde{h}(Y)}
 \ge
 \max\{  \abs{h(Y)\cap[0,\tilde{\beta}]},\  \abs{h(Y)\cap[\tilde{\beta},\beta]}\}
$$
and Lemma~\ref{L:g-props-length} we immediately have
$$
 \Gamma\cdot
 \abs{\tilde{h}(Y)}
 \ge
 \frac{\Gamma}{2} \cdot \abs{h(Y)}
 \ge
 \frac {12}{1-4q} \cdot \abs{h(Y)}
 \ge \lengthd{d}{Y}.
$$
Hence the proof is finished.
\end{proof}

\section{Length-expanding Lipschitz maps
from/to the interval}
\label{S:varphi}

The following proposition provides the key tool
for constructing LEL maps.
Basically it is just a reformulation of Lemma~\ref{P:props-g-h}.

\begin{proposition}\label{T:main}
 There are constants $0<\gamma<\Gamma$ and $L>1$ such that the following hold:
 For every non-degenerate totally regular continuum $X$ and every two points $a,b\in X$
 there are a compatible convex metric $d$ on $X$ and maps $\varphi:\III\to X$, $\psi:X\to \III$
 with the following properties:
 \begin{enumerate}
    \item[(a)] $\varphi(0)=a$, $\varphi(1)=b$ and $\psi(a)=0$;
    \item[(b)] $\varphi,\psi$ are Lipschitz-$L$ surjections;
    \item[(c)] $\gamma\cdot \abs{J} \le \lengthd{d}{\varphi(J)}\le \Gamma\cdot\abs{\psi\circ \varphi(J)}$
     for every closed subinterval $J$ of $\III$;
  \item[(d)] $\lengthd{d}{X}= 1$.
 \end{enumerate}
 Moreover, if $\Cut(a,b)$ is uncountable then for any $\delta>0$ a metric $d$ and maps $\varphi,\psi$
 can be chosen such that it also holds:
 \begin{enumerate}
    \item[(e)] $\psi(b)=1$;
    \item[(f)] $d(a,b)>1-\delta$.
 \end{enumerate}
\end{proposition}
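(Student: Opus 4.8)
The plan is to deduce Proposition~\ref{T:main} from Lemma~\ref{P:props-g-h} by rescaling the intervals and the metric. Apply Lemma~\ref{P:props-g-h} with, say, $\delta=1/2$ to get universal constants $0<\gamma_0<\Gamma_0$; for a given $X$ and $a,b$ it produces a compatible convex metric $d_0$ on $X$, maps $g:[0,\alpha]\to X$ and $h:X\to[0,\beta]$ with the stated properties, in particular $\ell_0:=\lengthd{d_0}{X}\in[1/2,1]$ and $\alpha,\beta$ comparable to $\ell_0$ (so both are bounded away from $0$ and from above by universal constants). First rescale the metric: set $d=\ell_0^{-1}d_0$, so that $\lengthd{d}{X}=1$, giving (d); this multiplies all lengths and all values of $h$ by $\ell_0^{-1}$ but changes none of the comparability constants except by the universal factors $\ell_0^{\pm1}\in[1/2,2]$.

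Next reparametrize the domain and codomain intervals linearly to become $\III$. Define $\varphi:\III\to X$ by $\varphi(t)=g(\alpha t)$ and $\psi:X\to\III$ by $\psi(x)=h(x)/\beta$ (using the rescaled $h$; note $\beta$ also gets divided by $\ell_0$, but I will keep the symbol $\beta$ for the new one, which by (d) of Lemma~\ref{P:props-g-h} lies in $[1/2-\delta,1]=[0,1]$, in fact in a universally bounded interval after rescaling, so $1/\beta$ is bounded by a universal constant). Then $\varphi(0)=g(0)=a$, $\varphi(1)=g(\alpha)=b$, $\psi(a)=h(a)/\beta=0$, giving (a). Since $g,h$ are Lipschitz-$1$ for $d_0$, after the metric rescaling by $\ell_0^{-1}\le2$ and the interval rescaling by $\alpha\le2$ resp.\ $1/\beta$, the maps $\varphi,\psi$ are Lipschitz-$L$ for a universal constant $L>1$, and they remain surjective; this is (b). For (c): given a closed subinterval $J\subseteq\III$, write $J'=\alpha J\subseteq[0,\alpha]$, so $\varphi(J)=g(J')$ and $\abs{J'}=\alpha\abs{J}$, while $\psi\circ\varphi(J)=h(g(J'))/\beta$ so $\abs{\psi\circ\varphi(J)}=\abs{h\circ g(J')}/\beta$. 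Plugging the chain $\gamma_0\abs{J'}\le\lengthd{d}{g(J')}\le\Gamma_0\abs{h\circ g(J')}$ from Lemma~\ref{P:props-g-h}(c) (which survives the metric rescaling with the same constants, since both sides scale by $\ell_0^{-1}$ on the length side and $h$ by $\ell_0^{-1}$ on the right) and absorbing the bounded factors $\alpha,\beta,\ell_0$ into new universal constants $\gamma,\Gamma$ yields the required inequality.

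For the final clause, suppose $\Cut(a,b)$ is uncountable and fix $\delta>0$; apply instead the second part of Lemma~\ref{P:props-g-h} with the given $\delta$ (shrunk if necessary so that $\delta<1/2$ and so that the conclusion $d(a,b)>(1-\delta)\lengthd{d}{X}$ translates to $d(a,b)>1-\delta$ after normalization). Then additionally $h(b)=\beta$, so $\psi(b)=h(b)/\beta=1$, giving (e), and after rescaling the metric to unit length $d(a,b)>(1-\delta)\cdot1=1-\delta$, giving (f). One small point to check is that the universal constants $\gamma,\Gamma,L$ can be taken independent of $\delta$: indeed the choice $\delta=1/2$ already pins down $\gamma_0,\Gamma_0$, and the bounds $\alpha\le2\lengthd{d}{X}$, $\beta\ge(1/2-\delta)\lengthd{d}{X}$ with $\delta<1/2$ keep all auxiliary factors in a fixed compact range, so the same $\gamma,\Gamma,L$ work in both the general and the uncountable-$\Cut$ case.

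I do not expect a genuine obstacle here — the proposition is, as the text says, essentially a restatement of Lemma~\ref{P:props-g-h}. The only mildly delicate bookkeeping is tracking how the two rescalings (of the metric by $\ell_0^{-1}$ and of the two parameter intervals by $\alpha$ and $\beta^{-1}$) interact with the Lipschitz constants and with the constants in inequality (c), and verifying that all the incurred factors are bounded by absolute constants; this follows from the two-sided bounds on $\lengthd{d}{X}$, $\alpha$ and $\beta$ supplied by Lemma~\ref{P:props-g-h}(d). I would present the proof in exactly the above order: invoke the lemma, rescale the metric, reparametrize, then verify (a)–(d), and finally handle (e)–(f) under the uncountability hypothesis.
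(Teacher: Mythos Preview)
Your approach is correct and the same as the paper's: invoke Lemma~\ref{P:props-g-h}, rescale the metric to unit total length, and linearly reparametrize the domain and range intervals, then check (a)--(f). One slip to fix: with $\delta=1/2$ the lower bound in Lemma~\ref{P:props-g-h}(d) gives only $\beta\ge(1/2-\delta)\ell_0=0$, so $\beta$ is not bounded away from $0$ and your claim that $1/\beta$ (hence $\Lip(\psi)$) is bounded by a universal constant does not follow; take instead any fixed $\delta<1/2$ in the general case, and in the uncountable-$\Cut$ case shrink the given $\delta$ below a fixed threshold (the paper imposes $2/(1-2\delta)<L$), which restores the uniform bounds you need.
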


\begin{proof}
Take any $L>2$ and let $0<\gamma<\Gamma$ be constants from
Lemma~\ref{P:props-g-h}. Fix a non-degenerate totally regular
continuum $X$, a pair $a,b\in X$ and a
positive real $\delta$; we may assume that $2/(1-2\delta)<L$.
We give the proof only in the case when $\Cut(a,b)$ is uncountable; the other case can be described analogously.

Let $\tilde{d}$ be a convex metric on $X$ and $g:[0,\alpha]\to X$, $h:X\to
[0,\beta]$ be maps satisfying (a)--(f) from
Lemma~\ref{P:props-g-h}. Now define
$d:X\times X\to\RRR$, $\varphi:\III\to X$ and $\psi:X\to \III$ by
$$
 d(x,y) = \frac 1c\cdot \tilde{d}(x,y), \qquad
 \varphi(t)=g(\alpha t)
 \qquad\text{and}\qquad
 \psi(x)=\frac{1}{\beta}\cdot  h(x),
$$
where $c=\lengthd{\tilde{d}}{X}$.
Then (a) and (d)--(f) are immediately satisfied. Since
$$
  \Lip_d(\varphi)=\frac{\alpha}{c}\cdot\Lip_{\tilde d}(g)\le 2<L
  \qquad\text{and}\qquad
  \Lip_d(\psi)=\frac{c}{\beta}\cdot\Lip_{\tilde d}(h)\le\frac{2}{1-2\delta}<L,
$$
also (b) is fulfilled.
The property (c) follows from
$$
 \lengthd{d}{\varphi(J)}
 \ge \frac{\gamma \alpha}{c}\cdot \abs{J}
 \qquad\text{and}\qquad
 \Gamma\cdot  \abs{\psi\circ\varphi(J)}
 \ge \frac{c}{\beta}\cdot \lengthd{d}{\varphi(J)}
$$
and from $\alpha\ge c\ge \beta$.
\end{proof}

\begin{corollary}\label{C:tentLikeInterval}
Every non-degenerate totally regular continuum $X$, endowed with a suitable
convex metric $d$ and a dense systems $\CCc$ of subcontinua of $X$,
admits LEL-maps $\tilde{\varphi}:(I,d_I,\CCc_I)\to (X,d,\CCc)$ and
$\tilde{\psi}:(X,d,\CCc)\to (I,d_I,\CCc_I)$.
\end{corollary}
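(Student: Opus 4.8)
The plan is to deduce the corollary from Proposition~\ref{T:main}, the only wrinkle being that the maps $\varphi,\psi$ it furnishes need not themselves be length-\emph{expanding}, since the constants there satisfy $\gamma<1<\Gamma$. First I would apply Proposition~\ref{T:main} to $X$ and an arbitrary pair $a,b\in X$, obtaining a compatible convex metric $d$ on $X$ with $\lengthd{d}{X}=1$ together with Lipschitz-$L$ surjections $\varphi:\III\to X$ and $\psi:X\to\III$ satisfying
$$
 \gamma\cdot\abs{J}\ \le\ \lengthd{d}{\varphi(J)}\ \le\ \Gamma\cdot\abs{\psi\circ\varphi(J)}
 \qquad\text{for every closed subinterval }J\subseteq\III .
$$
I would then set $\CCc=\varphi(\CCc_I)$; by Lemma~\ref{L:CX} this is a dense system of (non-degenerate) subcontinua of $X$, and the left-hand inequality above shows moreover that $\abs{\psi\circ\varphi(J)}>0$ whenever $J$ is non-degenerate, so that $\psi\circ\varphi(J)$ is always a non-degenerate closed subinterval.

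The next step is to amplify by composing with generalized tent maps. Choose integers $k,l\ge 3$ large enough that $\gamma k/2>1$ and $l/(2\Gamma)>1$, and define $\tilde\varphi=\varphi\circ f_k:\III\to X$ and $\tilde\psi=f_l\circ\psi:X\to\III$. Both are surjective, being composites of surjections, and Lipschitz with constants at most $kL$ and $lL$ respectively. To see that $\tilde\varphi:(\III,d_I,\CCc_I)\to(X,d,\CCc)$ is LEL: for $J\in\CCc_I$ Lemma~\ref{L:tentLike_fk} gives $f_k(J)\in\CCc_I$, hence $\tilde\varphi(J)=\varphi(f_k(J))\in\CCc$; and if $\tilde\varphi(J)\ne X$ then $f_k(J)\ne\III$ (otherwise surjectivity of $\varphi$ would force $\tilde\varphi(J)=X$), so Lemma~\ref{L:tentLike_fk} yields $\abs{f_k(J)}\ge(k/2)\abs{J}$ and therefore
$$
 \lengthd{d}{\tilde\varphi(J)}=\lengthd{d}{\varphi(f_k(J))}\ \ge\ \gamma\cdot\abs{f_k(J)}\ \ge\ \frac{\gamma k}{2}\cdot\abs{J},
$$
which is the required expansion since $\gamma k/2>1$. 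Symmetrically, for $C=\varphi(J)\in\CCc$ the set $\psi(C)=\psi\circ\varphi(J)$ is a non-degenerate closed subinterval of $\III$, so $\tilde\psi(C)=f_l(\psi(C))\in\CCc_I$ by Lemma~\ref{L:tentLike_fk}; and if $\tilde\psi(C)\ne\III$ then, combining Lemma~\ref{L:tentLike_fk} with the right-hand inequality above,
$$
 \lengthd{d_I}{\tilde\psi(C)}=\abs{f_l(\psi(C))}\ \ge\ \frac{l}{2}\,\abs{\psi\circ\varphi(J)}\ \ge\ \frac{l}{2\Gamma}\,\lengthd{d}{\varphi(J)}=\frac{l}{2\Gamma}\,\lengthd{d}{C},
$$
so $\tilde\psi:(X,d,\CCc)\to(\III,d_I,\CCc_I)$ is LEL as well.

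I do not anticipate any serious obstacle: all the substantive work has been packed into Proposition~\ref{T:main}, and the remaining content is only the standard device of pre- or post-composing with a tent-like map $f_k$ (resp.\ $f_l$) in order to convert the bounded multiplicative distortion of $\varphi$ (resp.\ $\psi$) into a genuine length expansion, while keeping the systems $\CCc_I$ and $\CCc$ invariant. The single point that deserves a line of care is the non-degeneracy of the intermediate images $f_k(J)$, $\varphi(f_k(J))$, $\psi(C)$ and $f_l(\psi(C))$, all of which follows at once from the two-sided estimate of Proposition~\ref{T:main}(c) together with Lemma~\ref{L:tentLike_fk}.
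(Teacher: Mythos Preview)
Your proof is correct and follows exactly the same approach as the paper: apply Proposition~\ref{T:main}, set $\CCc=\varphi(\CCc_I)$ (dense by Lemma~\ref{L:CX}), and compose with tent maps $f_k$, $f_l$ (with $k,l$ chosen so that $\gamma k/2>1$ and $l/(2\Gamma)>1$) to obtain $\tilde\varphi=\varphi\circ f_k$ and $\tilde\psi=f_l\circ\psi$. You simply spell out the LEL verification and the non-degeneracy checks that the paper leaves implicit.
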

\begin{proof}
Fix arbitrary $a,b\in X$; let $d,\varphi,\psi$ be as in Proposition~\ref{T:main}.
Put $\CCc=\varphi(\CCc_I)$; this is a dense system by Lemma~\ref{L:CX}.
Let $f_k$ be the map from Lemma~\ref{L:tentLike_fk}, where $k\ge 3$ is
such that $\varrho=\gamma k/2> 1$. Then the map
$\tilde{\varphi}=\varphi\circ f_k:I\to X$ is $(\varrho, kL)$-LEL.
Analogously, if $k'\ge 3$ is such that $\varrho'=k'/(2\Gamma)>1$
then $\tilde{\psi}=f_{k'}\circ \psi:X\to I$ is $(\varrho', k'L)$-LEL.
\end{proof}

Notice that from the proofs of Lemma~\ref{P:props-g-h} and
Proposition~\ref{T:main} we can see that to fulfill only the conditions (a)--(d) we can find
$d,\varphi,\psi$ such that
$$
 \psi(x)=c\cdot d(a,x)
 \quad\text{for every}\quad
 x\in X,
$$
where $c$ is a constant. One can also see that any constants
$0<\gamma<\frac 12$, $\Gamma>24$ and $L>2$ are suitable in
Proposition~\ref{T:main}. Derivation of the ``best'' values for $\gamma,\Gamma$ and
$L$ is out of the scope of this paper. However, we can at least say
that $L$ and the ratio $\Gamma/\gamma$ cannot be arbitrarily close
to $1$. In fact, if $X$ is the $3$-star then easy arguments show
that we must have $\Gamma/\gamma \ge 3$. Further, if $X=(X,d)$ is a
simple closed curve of length $1$ then, for any $\psi:X\to \III$
from Proposition~\ref{T:main}, we
can write $X$ as the union $A\cup B$ of two non-overlapping arcs such
that $\psi(A)=\psi(B)=\III$; so $L\ge\Lip(\psi)\ge 2$.

The following example shows that in the second part of Proposition~\ref{T:main} one
cannot replace the assumption $\Cut(a,b)$ is uncountable by
$\Cut(a,b)$ is infinite.

\begin{example}
Take an integer $p\ge 3$,
put $a=(-1,0)$, $b=(1,0)$, $a_0=(0,0)$, $a_k=(1-2^{-k},0)$, $a_{-k}=-a_k$ ($k\in\NNN$) and define
a continuum $X_p\subseteq\RRR^2$ by
$$
 X_p = \bigcup_{k\in\ZZZ} G_k \cup \{a,b\}
$$
where every $G_k$ ($k\in\ZZZ$) is a graph with exactly two vertices
$a_{k-1},a_k$, these vertices have order $p$ (in $G_k$) and $G_k\cap G_l$ is empty for
$l>k+1$ and is equal to $\{a_k\}$ for $l=k+1$; see
Figure~\ref{Fig:infChain} for $p=3$. In this case $a,b$ are end points of $X_p$  (so $\Cut(a,b)$
is infinite), but neither (e) nor (f) can be fulfilled for small
$\delta$.

 \begin{figure}[ht!]
   \includegraphics[width=0.8\textwidth]{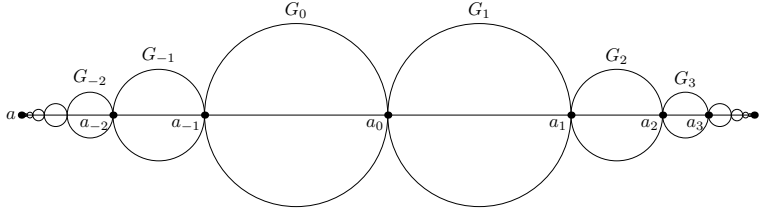}
   \caption{The continuum $X_3$}
   \label{Fig:infChain}
 \end{figure}

To show this realize that
$\lengthd{d}{X_p} \ge p\cdot d(a,b)$ for any convex metric $d$ on $X_p$;
indeed, $X_p$ is the union of $p$ arcs with ends $a,b$ (so the length of any of them
is greater than or equal to $d(a,b)$) and with countable intersections.
So immediately (f) is not true for $\delta<1-\frac 1p$.
Moreover, since $\psi$ is Lipschitz-$L$ and $\psi(a)=0$
 we have that
$$
 \psi(b)=\psi(b)-\psi(a)
 \le L\cdot d(a,b)
 \le L\cdot \frac{\lengthd{d}{X_p}}{p}
 = \frac{L}{p}
$$
which is smaller than $1$ for $p>L$.
So also (e) is not true. Notice that any metric $d$ satisfying (a)--(d) must be such
that the diameter of $X_p$ is approximately $p$-times larger than the distance of $a,b$;
so for some $k$ the shortest edge of $G_k$ must be ``very small'' when compared to the longest
one.
\end{example}

\begin{remark}\label{R:replacementOf(d)(f)}
If we replace the metric $d$
from Proposition~\ref{T:main} by $d'=c\cdot d$ (where $c>0$ is a constant),
the Lipschitz constants of $\varphi,\psi$ change to
$\Lip_{d'}(\varphi)=c\cdot \Lip_{d}(\varphi)$,
$\Lip_{d'}(\psi)=(1/c) \cdot \Lip_{d}(\psi)$.
So instead of the conditions (d) and (f) we can have
 \begin{enumerate}
    \item[(d')] $\lengthd{d}{X}<1/(1-\delta)$;
    \item[(f')] $d(a,b)=1$.
 \end{enumerate}
\end{remark}

\section{Proofs of the main results}\label{S:applications}
Now we are ready to prove the main results of the paper stated in the introduction.
For convenience we repeat the
statements of them.

\renewcommand{\themainresult}{\ref{T:MAINa}}
\begin{theoremA}
\thmMainA{}
\end{theoremA}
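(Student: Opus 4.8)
The plan is to assemble the machinery of the previous sections: at this point essentially all the work has been done in Proposition~\ref{T:main}, and what remains is bookkeeping about universal constants and the parity of the generalized tent maps $f_k$.

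First I would fix $\delta=1/2$ and apply Proposition~\ref{T:main} to $X$ and the pair $a,b$. This yields a compatible convex metric $d=d_{X,a,b}$ together with Lipschitz-$L$ surjections $\varphi_{X,a,b}:I\to X$ and $\psi_{X,a,b}:X\to I$ satisfying (a)--(d) of that proposition and, when $\Cut_X(a,b)$ is uncountable, also (e) and (f); here $0<\gamma<\Gamma$ and $L$ are the absolute constants of Proposition~\ref{T:main}. In particular $\lengthd{d}{X}=1$, $\psi_{X,a,b}(a)=0$, $\varphi_{X,a,b}(0)=a$, $\varphi_{X,a,b}(1)=b$, which is property~(a) of Theorem~\ref{T:MAINa}, and if $\Cut_X(a,b)$ is uncountable then $d(a,b)>1-\delta=1/2$ and $\psi_{X,a,b}(b)=1$. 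Property~(b) is immediate from Lemma~\ref{L:CX}(1): since $\varphi_{X,a,b}$ is a Lipschitz surjection, $\CCc=\CCc_{X,a,b}=\varphi_{X,a,b}(\CCc_I)$ is a dense system of subcontinua of $X$.

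For (c), given $\varrho>1$ I would choose \emph{odd} integers $k,l\ge 3$, depending only on $\varrho$ (and on $\gamma,\Gamma$), with $\gamma k/2\ge\varrho$ and $l/(2\Gamma)\ge\varrho$, and set $\varphi=\varphi_{X,a,b}\circ f_k$ and $\psi=f_l\circ\psi_{X,a,b}$, exactly as in the proof of Corollary~\ref{C:tentLikeInterval}. Since $f_k(0)=0$ and $f_k(1)=1$ for $k$ odd (and likewise for $f_l$), this gives $\varphi(0)=a$, $\varphi(1)=b$, $\psi(a)=0$, and, in the uncountable case, $\psi(b)=f_l(1)=1$, which is the remaining part of (d). The length-expansion is checked directly: for $C\in\CCc_I$ one has $f_k(C)\in\CCc_I$ by Lemma~\ref{L:tentLike_fk}, and if $\varphi(C)\ne X$ then $f_k(C)\ne I$, so $\abs{f_k(C)}\ge(k/2)\abs{C}$ and, by Proposition~\ref{T:main}(c), $\lengthd{d}{\varphi(C)}\ge\gamma\abs{f_k(C)}\ge(\gamma k/2)\abs{C}\ge\varrho\abs{C}$; together with $\Lip(\varphi)\le kL$ and surjectivity this shows $\varphi:(I,d_I,\CCc_I)\to(X,d,\CCc)$ is $(\varrho,kL)$-LEL. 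Symmetrically, for $C=\varphi_{X,a,b}(J)\in\CCc$ the interval $\psi_{X,a,b}(C)\in\CCc_I$ is non-degenerate by Proposition~\ref{T:main}(c), and if $\psi(C)\ne I$ then $\abs{\psi(C)}\ge(l/2)\abs{\psi_{X,a,b}(C)}\ge(l/(2\Gamma))\lengthd{d}{C}\ge\varrho\lengthd{d}{C}$, so $\psi:(X,d,\CCc)\to(I,d_I,\CCc_I)$ is $(\varrho,lL)$-LEL. Finally, setting $L_\varrho=\max\{kL,lL\}$ and invoking Lemma~\ref{L:CCc'} to raise the Lipschitz constants, both maps become $(\varrho,L_\varrho)$-LEL with $L_\varrho$ depending only on $\varrho$.

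I do not expect a genuine obstacle; the theorem is a corollary of Proposition~\ref{T:main}. The only points requiring care are: keeping the metric $d_{X,a,b}$ and the maps $\varphi_{X,a,b},\psi_{X,a,b}$ fixed once and for all, so that the $\varrho$-dependence is confined to the choice of $f_k,f_l$; selecting $k,l$ of the right parity so that $a,b$ land at the endpoints of $I$ (and, in the uncountable case, so that $\psi(b)=1$); and verifying that $L_\varrho$ depends on $\varrho$ alone, which it does since $\gamma,\Gamma,L$ are absolute constants and $k,l$ are explicit functions of $\varrho$.
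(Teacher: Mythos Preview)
Your proposal is correct and follows essentially the same route as the paper: fix $\delta=1/2$, invoke Proposition~\ref{T:main} to obtain $d$, $\varphi_{X,a,b}$, $\psi_{X,a,b}$, then compose with the tent maps $f_k,f_l$ for suitably large \emph{odd} $k,l$ as in Corollary~\ref{C:tentLikeInterval}, using oddness to preserve the endpoint conditions. The only cosmetic difference is the explicit choice of $L_\varrho$ (you take $\max\{kL,lL\}$, the paper takes $2L\varrho(1+\max\{\Gamma,1/\gamma\})$); both depend only on $\varrho$ and the absolute constants, so either is fine.
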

\begin{proof}
Let $\gamma,\Gamma$ and $L$ be constants from
Proposition~\ref{T:main}. Let $X$ be a non-degenerate totally regular continuum
and $a,b$ be two points of $X$.
Put $\delta=1/2$ and fix a metric $d=d_{X,a,b}$ on $X$ and
maps $\varphi_{X,a,b}:\III\to X$, $\psi_{X,a,b}:X\to \III$ satisfying (a)--(d)
(or (a)--(f) if $\Cut_X(a,b)$ is uncountable) from Proposition~\ref{T:main}.
Recall that
$\lengthd{d}{X}=1$ and, provided $\Cut_X(a,b)$ is uncountable, $d(a,b)>1/2$.
By Lemma~\ref{L:CX}, $\CCc_{X,a,b}=\varphi_{X,a,b}(\CCc_I)$ 
is a dense system of subcontinua of $X$.

Let
$k,l\ge 3$ be the smallest odd integers such that $\gamma k/2\ge \varrho$
and $l/(2\Gamma)\ge\varrho$.
Put $L_\varrho=2L\varrho\cdot(1+\max\{\Gamma,1/\gamma\})>1$.
As in the proof of Corollary~\ref{C:tentLikeInterval},
the maps
$\varphi=\varphi_{X,a,b}\circ f_{k}:I\to X$ and $\psi=f_{l}\circ \psi_{X,a,b}:X\to I$
are $(\varrho,L_\varrho)$-LEL.
Since $k,l$ are odd we have $\varphi(0)=\varphi_{X,a,b}(0)=a$,
 $\varphi(1)=\varphi_{X,a,b}(1)=b$, $\psi(a)=\psi_{X,a,b}(a)=0$ and,
 provided  $\Cut_X(a,b)$ is uncountable, $\psi(b)=\psi_{X,a,b}(b)=1$.
\end{proof}

\renewcommand{\themainresult}{\ref{T:MAIN}}
\begin{theoremA}
\thmMain{}
\end{theoremA}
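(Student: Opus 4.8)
The plan is to obtain $f$ as a composition $\varphi\circ\psi$, where $\psi:X\to I$ and $\varphi:I\to X'$ are LEL maps supplied by Theorem~\ref{T:MAINa}, and then to transfer length-expansivity of the composite via Lemma~\ref{L:tentLikeComposition}. Concretely, fix $\varrho>1$ and set $\varrho'=\sqrt{\varrho}>1$. First I would apply Theorem~\ref{T:MAINa} to the continuum $X$ with the chosen points $a,b$ and to the value $\varrho'$; this produces the convex metric $d_{X,a,b}$, the dense system $\CCc_{X,a,b}$, a constant $L_{\varrho'}$ depending only on $\varrho'$ (hence only on $\varrho$), and a $(\varrho',L_{\varrho'})$-LEL map $\psi:(X,d_{X,a,b},\CCc_{X,a,b})\to(I,d_I,\CCc_I)$ with $\psi(a)=0$ and, if $\Cut_X(a,b)$ is uncountable, $\psi(b)=1$. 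Then I would apply Theorem~\ref{T:MAINa} a second time, now to $X'$ with $a',b'$ and the same $\varrho'$, obtaining $d_{X',a',b'}$, $\CCc_{X',a',b'}$ and a $(\varrho',L_{\varrho'})$-LEL map $\varphi:(I,d_I,\CCc_I)\to(X',d_{X',a',b'},\CCc_{X',a',b'})$ with $\varphi(0)=a'$ and $\varphi(1)=b'$; the constant is literally the same $L_{\varrho'}$, since by Theorem~\ref{T:MAINa} it depends only on $\varrho'$.

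Next I would set $f=\varphi\circ\psi$. Since $\psi$ is LEL for the pair of systems $\CCc_{X,a,b},\CCc_I$ and $\varphi$ is LEL for $\CCc_I,\CCc_{X',a',b'}$, so that the intermediate system matches, Lemma~\ref{L:tentLikeComposition} gives that $f:(X,d_{X,a,b},\CCc_{X,a,b})\to(X',d_{X',a',b'},\CCc_{X',a',b'})$ is $((\varrho')^2,L_{\varrho'}^2)$-LEL; in particular it is surjective and Lipschitz-$L_{\varrho'}^2$. As $(\varrho')^2=\varrho$, putting $L_\varrho:=L_{\sqrt{\varrho}}^2$ — a constant depending only on $\varrho$ — makes $f$ a $(\varrho,L_\varrho)$-LEL map of the required form $\varphi\circ\psi$ with $\psi:X\to I$ and $\varphi:I\to X'$ both LEL. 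The base-point conditions then follow by composing the data: $f(a)=\varphi(\psi(a))=\varphi(0)=a'$, and when $\Cut_X(a,b)$ is uncountable, $f(b)=\varphi(\psi(b))=\varphi(1)=b'$.

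I do not expect a genuine obstacle here: all the substantial work — building the metrics $d_{X,a,b}$, $d_{X',a',b'}$, producing LEL maps to and from the interval, and controlling the images of the distinguished points — is already contained in Theorem~\ref{T:MAINa} (via Proposition~\ref{T:main}), and Lemma~\ref{L:tentLikeComposition} packages the behaviour of compositions exactly as needed. The only mild points requiring care are the bookkeeping of constants (checking that one $L_{\varrho'}$ serves both invocations and that the resulting $L_\varrho$ depends on $\varrho$ alone, which is why I pick $\varrho'=\sqrt{\varrho}$ rather than an arbitrary intermediate value) and the verification that the system used as the codomain of $\psi$ agrees with the one used as the domain of $\varphi$ — both being $\CCc_I$ — since this compatibility is precisely the hypothesis of Lemma~\ref{L:tentLikeComposition}.
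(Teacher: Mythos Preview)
Your proposal is correct and follows exactly the same approach as the paper, which simply cites Theorem~\ref{T:MAINa} and Lemma~\ref{L:tentLikeComposition}; your extra care with choosing $\varrho'=\sqrt{\varrho}$ to control the composite constants is a reasonable way to make the dependence of $L_\varrho$ on $\varrho$ explicit.
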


\begin{proof}
The theorem follows from Theorem~\ref{T:MAINa} and
Lemma~\ref{L:tentLikeComposition}.
\end{proof}

\renewcommand{\themainresult}{\ref{C:mainCont}}
\begin{corollaryA}
\corCont{}
\end{corollaryA}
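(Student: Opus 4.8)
The plan is to obtain the corollary directly from Theorem~\ref{T:MAIN} and Proposition~\ref{P:tentLikeIsExact}; the only thing to do is to specialise the former to a single continuum and to observe that the hypotheses of the latter are then literally satisfied.

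First I would fix an arbitrary non-degenerate totally regular continuum $X$, pick any two points $a,b\in X$ and any $\varrho>1$, and apply Theorem~\ref{T:MAIN} with $X'=X$, $a'=a$ and $b'=b$. Since in this case $d_{X',a',b'}=d_{X,a,b}$ and $\CCc_{X',a',b'}=\CCc_{X,a,b}$, the theorem produces a $(\varrho,L_\varrho)$-LEL \emph{self}-map
$$
 f:(X,d_{X,a,b},\CCc_{X,a,b})\to (X,d_{X,a,b},\CCc_{X,a,b})
$$
which, moreover, is of the form $f=\varphi\circ\psi$ for some maps $\psi:X\to I$ and $\varphi:I\to X$.

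Next I would invoke Proposition~\ref{P:tentLikeIsExact}: its hypothesis is exactly a LEL map from $(X,d,\CCc)$ to itself, which is what the previous step supplies, so $f$ is exact and has finite positive topological entropy; and since $f$ factors as $\varphi\circ\psi$ through the interval, the ``moreover'' clause of that proposition yields that $f$ has the specification property and is exactly Devaney chaotic. Finally, $X$ being a non-degenerate continuum is infinite, so the notion of Devaney chaos is meaningful here, and the corollary follows.

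There is no genuine obstacle once Theorems~\ref{T:MAINa}--\ref{T:MAIN} and Proposition~\ref{P:tentLikeIsExact} are in hand, as the entire content of the corollary has been absorbed into those results. The only (routine) points to check are that Theorem~\ref{T:MAIN} indeed delivers a map whose domain and codomain carry the \emph{same} metric and the \emph{same} dense system of subcontinua --- automatic for the choice $X'=X$, $a'=a$, $b'=b$ --- and that Proposition~\ref{P:tentLikeIsExact} is stated precisely for LEL maps with $\CCc=\CCc'$, so that it applies without modification.
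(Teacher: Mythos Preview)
Your proposal is correct and follows exactly the approach of the paper: the paper's proof is the single sentence ``This immediately follows from Theorem~\ref{T:MAIN} and Proposition~\ref{P:tentLikeIsExact},'' and you have simply unpacked that sentence in detail.
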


\begin{proof}
This immediately follows from Theorem~\ref{T:MAIN} and
Proposition~\ref{P:tentLikeIsExact}.
\end{proof}

\renewcommand{\themainresult}{\ref{C:mainUnion}}
\begin{corollaryA}
\corUnion{}
\end{corollaryA}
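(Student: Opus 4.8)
The plan is to reduce the statement to the case of a finite \emph{disjoint} union and then run a cyclic construction powered by Theorem~\ref{T:MAIN} and Proposition~\ref{P:tentLikeIsExact}, in the spirit of the $\omega$-star example in the introduction.

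First I would carry out the reduction. Write $Y=Y_1\cup\dots\cup Y_m$ with each $Y_i$ a non-degenerate totally regular continuum. Each $Y_i$, being connected, lies in a single connected component of $Y$, so $Y$ has at most $m$ components $Z_1,\dots,Z_k$; each $Z_j$ is clopen in $Y$ and equals the union of those $Y_i$ that it contains, hence is a connected finite union of non-degenerate totally regular continua. I would then invoke the fact that such a union is again a totally regular continuum --- this follows from Theorem~\ref{T:totallyRegularEquivalentConditions}: e.g.\ via condition~(2), since the degree of a point in a finite union is bounded by the sum of its degrees in the members through it, or via condition~(3) by gluing the finite-length metrics. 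If $k=1$ then $Y$ is itself a non-degenerate totally regular continuum and the claim (even with ``exactly Devaney chaotic'') is Corollary~\ref{C:mainCont}; so assume $k\ge2$ and write $Y=Z_1\sqcup\dots\sqcup Z_k$.

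Next I would build the map. Fix any $\varrho>1$ and let $L_\varrho$ be the corresponding constant of Theorem~\ref{T:MAIN}. For each $j$ pick $a_j,b_j\in Z_j$ and equip $Z_j$ with the metric $d_j=d_{Z_j,a_j,b_j}$ and dense system $\CCc_j=\CCc_{Z_j,a_j,b_j}$ of Theorem~\ref{T:MAINa}. Theorem~\ref{T:MAIN} supplies (indices read modulo $k$) $(\varrho,L_\varrho)$-LEL maps $f_j\colon(Z_j,d_j,\CCc_j)\to(Z_{j+1},d_{j+1},\CCc_{j+1})$ of the form $f_j=\varphi_j\circ\psi_j$ with $\psi_j\colon Z_j\to I$, $\varphi_j\colon I\to Z_{j+1}$. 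I would put on $Y$ any metric $d$ compatible with its topology that restricts to $d_j$ on each $Z_j$ (say $d(x,y)=1$ for $x,y$ in different components) and set $f|_{Z_j}=f_j$, which is continuous because the $Z_j$ are clopen. Then, for each $j$, the map $f^k|_{Z_j}$ --- the composition (in application order) $f_j,f_{j+1},\dots$ of all $k$ maps of the cycle --- is a $(\varrho^k,L_\varrho^k)$-LEL self-map of $(Z_j,d_j,\CCc_j)$ by Lemma~\ref{L:tentLikeComposition}; moreover, re-associating the product of the $\varphi_i$'s and $\psi_i$'s as $\varphi_{\bullet}\circ(\psi_{\bullet}\circ\varphi_{\bullet})\circ\dots\circ(\psi_{\bullet}\circ\varphi_{\bullet})\circ\psi_j$ exhibits $f^k|_{Z_j}$ as a map $Z_j\to I$ followed by a map $I\to Z_j$. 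Hence Proposition~\ref{P:tentLikeIsExact} applies to each $f^k|_{Z_j}$: it is exact, has finite positive entropy, and has dense periodic points.

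Finally I would verify that $f$ is Devaney chaotic with finite positive entropy. The space $Y$ is infinite since each $Z_j$ is a non-degenerate continuum. Dense periodicity of $f^k|_{Z_j}$ on $Z_j$ for every $j$ gives dense periodicity of $f$ on $Y=\bigcup_j Z_j$. For transitivity, given nonempty open $U,V\subseteq Y$ choose $i,j$ with $U_0=U\cap Z_i\ne\emptyset$ and $V_0=V\cap Z_j\ne\emptyset$; exactness of $f^k|_{Z_i}$ yields $N$ with $f^{kN}(U_0)=Z_i$, and then $f^{kN+r}(U)\supseteq f^{r}(Z_i)=Z_j\supseteq V_0$ for the appropriate $r\in\{0,\dots,k-1\}$, using surjectivity of the $f_l$; so $f^{kN+r}(U)\cap V\ne\emptyset$. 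Since $Y=\bigsqcup_j Z_j$ is an $f^k$-invariant decomposition into clopen sets, $h(f)=\tfrac1k h(f^k)=\tfrac1k\max_j h(f^k|_{Z_j})\in(0,\infty)$. The only step that is not mere bookkeeping is the reduction --- establishing that the components of the union are totally regular continua; once $Y$ is presented as a finite disjoint union of non-degenerate totally regular continua, everything that follows is a routine assembly of the already-proved Theorem~\ref{T:MAIN} and Proposition~\ref{P:tentLikeIsExact}.
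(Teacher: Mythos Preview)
Your proof is correct and follows essentially the same approach as the paper: build a cycle of LEL maps between the components via Theorem~\ref{T:MAIN}, apply Proposition~\ref{P:tentLikeIsExact} to each $f^k|_{Z_j}$, and deduce Devaney chaos and finite positive entropy from the cyclic permutation. You are in fact more thorough than the paper, which starts directly from a disjoint union $X=\bigsqcup_{i=1}^k X_i$ without discussing the reduction, does not spell out the re-association showing $f^k|_{Z_j}$ has the form $\varphi\circ\psi$, and compresses the transitivity/dense-periodicity/entropy verification into the single phrase ``the assertion follows.''
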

\begin{proof}
 Let $X=\bigsqcup_{i=1}^k X_i$, where $X_i$'s are non-degenerate totally regular continua.
 Fix $\varrho>1$, $a_i\in X_i$ ($i=1,\dots,k$) and put $d_i=d_{X_i,a_i,a_i}$,
 $\CCc_i=\CCc_{X_i,a_i,a_i}$.
 Let $f_i:X_i\to X_{i+1}$ ($i=1,\dots,k-1$) and $f_k:X_k\to X_1$
 be LEL maps from Theorem~\ref{T:MAIN}.
 Finally, let $d$ be the metric on $X$ such that $d(x,y)=d_{i}(x,y)$ for any
 $x,y\in X_i$ ($i=1,\dots,k$) and $d(x,y)=2$ for $x\in X_i$, $y\in X_j$ ($i\ne j$).
 Since $d_{i}(X_i)\le 1$, the metric $d$ is compatible with the topology of $X$.

 Define $f:X\to X$ by $f|_{X_i}=f_i$ for $i=1,\dots,k$.
 For every $i$ the restriction $f^k|_{X_i}:X_i\to X_i$
 is LEL, hence it is exactly Devaney chaotic with positive finite entropy and specification
 by Proposition~\ref{P:tentLikeIsExact}. Since $f$ permutes
 $X_1,\dots,X_k$, the assertion follows.
\end{proof}

\medskip
\noindent
\textbf{Acknowledgment.} The author wishes to express his thanks
to \mL{}u\-bo\-m\'ir Snoha for his help with the preparation of the paper.
{The author was supported by the Slovak Research and Development Agency
under the contract No.~APVV-0134-10
and by the Slovak Grant Agency under the grants
VEGA~1/0855/08 and VEGA~1/0978/11.}


\end{document}